\newtheorem{theorem}{Theorem}[section]
\newtheorem{lemma}[theorem]{Lemma}
\newtheorem{proposition}[theorem]{Proposition}
\theoremstyle{definition}
\numberwithin{equation}{section}
\DeclareMathOperator{\disc}{disc} % mine
\DeclareMathOperator{\gal}{Gal} % mine
\DeclareMathOperator{\rk}{rk} % mine
\newcommand{\Z}{{\mathbb Z}} % mine
\newcommand{\N}{{\mathbb N}} % mine
\newcommand{\Q}{{\mathbb Q}} % mine
\newcommand{\Ell}{{\mathcal L}} % mine
\renewcommand{\o}{{\mathcal O}} % mine
\begin{document}

\title[Ideal Class Searches]{Relative Ideal Classes\\ of Arbitrary Order}

\author[D. L. Pincus]{David L. Pincus}
\address{Department of Mathematics\\ University of Maryland\\
College Park, MD 20742, USA}
\email{dlpincus@gmail.com}

\author[L. C. Washington]{Lawrence C. Washington}
\address{Department of Mathematics\\ University of Maryland\\
College Park, MD 20742, USA}
\email{lcw@umd.edu}

\date{}

\begin{abstract}
%A template for articles in IMPAN journals in the \texttt{amsart} style. Using \texttt{pdflatex} is strongly preferred.
We adapt a known technique for searching for ideal classes of arbitrary order and then apply it to 
three families of number fields.  We show that a family of cyclic sextic number fields has infinitely many fields in it that contain a relative ideal class of order $r,$ where $r$ is a positive integer relatively prime to the degree of the extension.  We then show that the same holds true for a family of cyclic quartic number fields.  
Though the technique is traditionally applied to Galois extensions, we show how it may be adapted to handle a family of non-Galois cubic number fields and prove that 
this family contains infinitely many fields with an ideal class of arbitrary order relatively prime to three.
\end{abstract}

\subjclass[2020]{Primary 11R29; Secondary 11R16}

\keywords{fractional ideal, class group, relative ideal class}

\maketitle

\section{Introduction}

Let $\frak{F}:=\{K_n\}$ denote an indexed family of number fields and let $r$ be a positive integer.  Let $\mathcal{I}_{K_n}$ denote the group 
of fractional ideals of $K_n \in \frak{F}.$
Can we find a field $K_n \in \frak{F}$ that contains an ideal class of order 
$r?$  We may answer in the affirmative if the following criteria are satisfied.\\
\begin{enumerate}\label{intro_list}
\item We can find an index $n$ and a principal ideal $w\o_{K_n} \in \mathcal{I}_{K_n}$ that factors 
as $w\o_{K_n} = J^r$ for some fractional ideal $J \in \mathcal{I}_{K_n}.$\label{intro_lista}
\item We can show that no smaller power of $J$ is principal.\label{intro_listb}\\
\end{enumerate}
This sort of approach has been used by many authors going back at least as far as Nagell \cite{nagell}.\\  
\\
A well-known method for meeting Criterion (\ref{intro_lista}) 
begins by looking for solutions to the equation 
\begin{equation}\label{intro_eqn1}
Y^r = f_n(X), 
\end{equation}
where $f_n(X) \in \Q\lbrack X \rbrack$ is the minimal polynomial of a primitive element for $K_n/\Q.$
Indeed if $K_n/\Q$ is Galois with primitive element $\rho,$ then a solution 
$(x,y) \in \Q^2$ to Equation (\ref{intro_eqn1}) gives 
\begin{equation}\label{intro_eqn2}
y^r=f_n(x) = \prod_{i=0}^{\lbrack K : \Q \rbrack - 1}(x-\rho_i),
\end{equation}
where the $\rho_i$ run over the Galois conjugates of $\rho:=\rho_0.$  If one can show that 
the exact power of a prime ideal that divides, say, $(x-\rho_i)\o_K$ is also the exact power of the same 
prime ideal that divides $y^r\o_K,$ then one may conclude that $(x-\rho_i)\o_{K_n}=J^r$ 
for some ideal $J \in \mathcal{I}_{K_n}.$\\  
\\
A well-known method for meeting Criterion (\ref{intro_listb}) is then to assume 
the contrary and suppose that $(x-\rho_i)\o_{K_n}$ is the $p$th power of a principal ideal, where 
$p$ is a prime dividing $r.$  Element-wise this implies that there is a $z \in K_n^\times$ 
and a unit $u \in \o_{K_n}^\times$ such that
\begin{equation}\label{intro_eqn3}
x-\rho_i = uz^p.
\end{equation}
By restricting our consideration to those solutions $(x,y) \in \Q^2$ that have the property that 
for each prime divisor $p$ of $r$ the prime divisors of $y$ 
satisfy certain $p$th power residue conditions, we can ensure that for each prime 
$p \mid r,$ Equation (\ref{intro_eqn3}) fails to hold.\\
\\
In this paper we extend the technique outlined above in the following three ways.\\
\\
\noindent {\bf A.} If several of the (ideal) factors $(x-\rho_i)\o_{K_n}$ on the right hand side of Equation (\ref{intro_eqn2}) factor as 
$r$th powers of ideals, then so do products and quotients of these ideals.  By taking appropriate 
products and quotients of such factors, we may obtain a principal ideal 
that has trivial norm down to each proper subfield of $K_n$ and which also 
factors as an $r$th power, say as $J^r.$  The ideal class of $J$ 
then has trivial norm down to each proper subfield of $K_n$ and is 
called a \emph{relative ideal class}.  We use this technique in Section 2 (resp. Section 3) below to 
search for relative ideal classes of order $r$ in a family of sextic (resp. quartic) number fields.  The qualifier 
\emph{relative} is important.  Had we simply been interested in searching for ideal classes of order $r,$ then 
it would have sufficed to look for a principal fractional ideal belonging to any of the proper subfields of $K_n$ that factors as an $r$th power.  The fact that the ideal classes we detect are relative ideal classes guarantees that these classes are \emph{new} and not coming from ideal classes of any proper subfield.\\
\\
\noindent {\bf B.}  If $K_n/\Q$ is non-Galois with primitive element $\rho \in \o_{K_n}$ and $(x,y)\in \Z^2$ is such that 
$y^r = f_n(x),$ then Equation (\ref{intro_eqn2}) no longer holds in $K_n.$  
It is still true, however, that 
$y^r = f_n(x)=(x-\rho)q(x),$
where $q(x)\in \o_{K_n}.$  In particular, if for each prime ideal 
$\frak{P}$ dividing $(x-\rho)\o_{K_n},$ we have $q(x) \not\equiv 0 \pmod{\frak{P}},$ then 
we still have that $(x-\rho)\o_{K_n} = J^r$ for some ideal $J \in \mathcal{I}_{K_n}.$  We use 
this technique in Section 4 to search for ideal classes of order $r$ in a family of non-Galois 
cubic number fields.\\
\\
\noindent {\bf C.}  Congruence conditions are not always sufficient to guarantee that 
for each prime $p$ dividing $r,$ 
$(x-\rho_i)/u$ doesn't have a $p$th root in $K_n.$  Likewise, congruence 
conditions don't always suffice to guarantee that for each prime $p \mid r,$ 
there is no $p$th root of $w/u$ in $K_n,$
where $w$ is a product of powers of the $(x-\rho_i).$  What the congruences can do, however, is limit the 
possibilities for $u \in \o_{K_n}^\times.$  Eliminating the possibility that there is 
a prime $p \mid r$ such that $w/u$ has a $p$th root in $K_n$ can then be reduced to 
showing that the polynomial $\varphi(X^r)$ is irreducible in $\Q\lbrack X \rbrack,$ where 
$\varphi(X) \in \Q\lbrack X \rbrack$ is the minimal polynomial of $w/u.$  Since $\varphi(X^r)$ depends on both $r$ and the parameter $n$ indexing our families of number fields, proving irreducibility is not easy.  In Sections 2 and 3 we make use of Hilbert's Irreducibility Theorem \cite{hilbert} to do so.\\  
\\
Using these techniques we are able to show that each of the families mentioned above has 
infinitely many fields that contain an ideal class of order $r$.  In the case of 
the sextic and quartic families, these are \emph{relative} ideal classes.  
In all cases $r$ is a positive integer relatively 
prime to the degree of the extension, but otherwise arbitrary.  Although the methods we employ may be extended to handle the case when $(r,\lbrack K : \Q \rbrack)>1,$ the result clutters the 
presentation and obscures the techniques we are trying to illustrate.  Thus our presentation sticks to the 
case in which $(r,\lbrack K : \Q \rbrack)=1.$

% Sextic Extensions
\section{A family of sextic number fields.}

Let $n \in \Z \smallsetminus \{0,\pm 6,\pm 26\}$ and let $K_n = \Q(\rho),$ where $\rho$ is a root of 
\begin{equation}\label{deg6_eqn1}
\begin{split}
f_n(X) := &X^6 -\left(\frac{n-6}{2}\right)X^5-5\left(\frac{n+6}{4}\right)X^4\\
&-20X^3+5\left(\frac{n-6}{4}\right)X^2+\left(\frac{n+6}{2}\right)X+1.
\end{split}
\end{equation}
$K_n/\Q$ is a real cyclic sextic extension and was studied extensively by M.-N. Gras in \cite{gras1}.
Let $G=\gal(K_n/\Q).$  Then $G = \langle \sigma \mid \sigma^6 = 1 \rangle,$ where 
$\sigma: \rho \mapsto (\rho-1)/(\rho+2).$  The images of 
$\rho$ under $G,$ denoted
$\rho_i := \sigma^i(\rho),$ $(i=0,\ldots,5),$ are given by
\begin{equation}\label{deg6_eqn4}
\begin{gathered}
\rho_0 = \rho\\
\rho_3 = -(\rho+2)/(2\rho+1)
\end{gathered} \qquad
\begin{gathered}
\rho_1 = (\rho-1)/(\rho+2)\\
\rho_4 = -(\rho+1)/\rho
\end{gathered} \qquad
\begin{gathered}
\rho_2 = -1/(\rho+1)\\
\rho_5 = -(2\rho+1)/(\rho-1).
\end{gathered}
\end{equation}

\noindent Let $\frak{F}:= \{K_n \mid n \neq 0, \pm 6, \pm 26\}.$  Let $\mathcal{I}_K$ denote the group 
of fractional ideals of $K \in \frak{F}$ and for a fractional ideal $J \in \mathcal{I}_K,$ let
$\lbrack J \rbrack$ denote its ideal class.  Let $k_2 = \Q(\sqrt{n^2+108})$ denote the unique 
quadratic subfield of $K$ and let $k_3 = \Q(\rho_0\rho_3)$ denote the unique cubic subfield 
of $K.$   Then $\lbrack J \rbrack$ is said to be a \emph{relative ideal class} if it has trivial norm down to both 
$k_2$ and $k_3.$\\  
\\
\noindent Note that $\rho_i,$ $(i=0,\ldots,5)$ is an algebraic integer if and only if 
$n \equiv 2 \pmod{4}.$  So it is also worthwhile to note that $K_n = \Q(\rho_0/\rho_3)$ and that $\rho_0/\rho_3$ is a zero 
of the polynomial
\begin{align}\label{deg6_eqn2}
\begin{split}
\varphi_n(X) &:= (X-1)^6-(n^2+108)(X^2+X)^2\\
&= X^6 -6X^5-(n^2+93)X^4-(2n^2+236)X^3-(n^2+93)X^2-6X+1.
\end{split}
\end{align}
In particular, $\rho_0/\rho_3$ and its Galois conjugates are algebraic integers for any $n,$ i.e., regardless of whether or 
not $n \equiv 2 \pmod{4}.$  Since 
$N_{K \mid k_2}(\rho_0/\rho_3) = 1$ and $N_{K \mid k_3}(\rho_0/\rho_3) = 1$ the Galois conjugates 
of $\rho_0/\rho_3$ are, in fact, relative units of $K_n.$\\
\\
\noindent Our primary result is 

\begin{theorem}\label{deg6_thm1}
Let $r>0$ be an integer with $(6,r)=1.$  There are infinitely many fields $K \in \frak{F}$ containing a 
relative ideal class of order $r.$
\end{theorem}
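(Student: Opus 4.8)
The plan is to produce, for infinitely many $n$, a principal ideal of $K_n$ that is simultaneously an $r$th power and relative, and then to show its $r$th root has order exactly $r$ in the class group, thereby meeting Criteria (\ref{intro_lista}) and (\ref{intro_listb}). The starting observation is that $f_n(X)=A(X)+nB(X)$ is affine-linear in $n$, with
$$B(X)=-\tfrac14\,X(X-1)(X+1)(2X+1)(X+2).$$
Hence for a fixed rational $x_0$ with $B(x_0)\ne 0$ and any integer $y$ one may simply \emph{solve} for $n=(y^r-A(x_0))/B(x_0)$; choosing $y$ in a suitable arithmetic progression makes $n\in\Z\smallsetminus\{0,\pm6,\pm26\}$ and produces a point on $Y^r=f_n(X)$. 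First I would impose on the prime divisors of $y$ the $p$th-power residue conditions (one for each $p\mid r$) described in the introduction. As in~(\ref{intro_eqn2}) these guarantee that no prime of $K_n$ divides two of the factors $x_0-\rho_i$, so that each $\mathfrak{P}\mid (x_0-\rho_i)\o_{K_n}$ occurs to a multiple of $r$; thus $(x_0-\rho_i)\o_{K_n}=\mathfrak{b}_i^{\,r}$ for ideals $\mathfrak{b}_i$ with $\sigma^j(\mathfrak{b}_i)=\mathfrak{b}_{i+j}$.

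To force relativity (technique {\bf A}) I would work in $\Z[G]$ and pick an exponent vector annihilated modulo $\sigma^6-1$ by both relative norms $1+\sigma^2+\sigma^4$ and $1+\sigma^3$; the element $\theta=1+\sigma-\sigma^3-\sigma^4$ works, as one checks directly. Reading $\theta$ off as exponents gives
$$w=\frac{(x_0-\rho_0)(x_0-\rho_1)}{(x_0-\rho_3)(x_0-\rho_4)},\qquad J=\frac{\mathfrak{b}_0\mathfrak{b}_1}{\mathfrak{b}_3\mathfrak{b}_4},\qquad w\o_{K_n}=J^{\,r}.$$
A one-line computation shows $N_{K\mid k_3}(w)=w\,\sigma^3(w)=1$ and $N_{K\mid k_2}(w)=w\,\sigma^2(w)\,\sigma^4(w)=1$, and the same telescoping applied to $J$ gives $J\,\sigma^3(J)=\o_{K_n}$ and $J\,\sigma^2(J)\,\sigma^4(J)=\o_{K_n}$. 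Hence the norm ideals of $J$ to $k_2$ and to $k_3$ are trivial, so $[J]$ is a relative class and Criterion (\ref{intro_lista}) holds.

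It remains to show $[J]$ has order exactly $r$. Its order divides $r$ because $J^{\,r}$ is principal; if the order were a proper divisor, some prime $p\mid r$ would give a principal $J^{\,r/p}=z\o_{K_n}$, whence $w=uz^p$ with $u\in\o_{K_n}^\times$, i.e. $w/u$ is a $p$th power in $K_n$. Here technique {\bf C} takes over: the residue conditions on the divisors of $y$ confine $u$ to finitely many classes modulo $(\o_{K_n}^\times)^p$, and for each admissible $u$ the existence of such a $z$ forces $\varphi(X^r)$ to be reducible over $\Q$, where $\varphi$ is the minimal polynomial of $w/u$. It therefore suffices to prove $\varphi(X^r)$ irreducible. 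Regarding $w/u$ as depending on the parameter $y$ (through $n$), I would show the associated two-variable polynomial is irreducible over $\Q(y)$ and then apply Hilbert's Irreducibility Theorem~\cite{hilbert} to specialize. Taking $y$ to run through primes lying in the prescribed residue classes and in the progression that keeps $n$ integral yields a non-thin set of parameters, which therefore meets the complement of Hilbert's exceptional thin set; the surviving $y$ give infinitely many fields $K_n$ carrying a relative class of order exactly $r$.

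The hard part will be this final step. The linearity of $f_n$ in $n$ renders the Diophantine bookkeeping and the passage to $r$th-power ideals routine, and the group-ring calculation making $[J]$ relative is purely formal. What is delicate is, first, using the congruences to pin $u$ down tightly enough that only finitely many minimal polynomials $\varphi$ arise, and second---above all---proving the generic irreducibility of $\varphi(X^r)$, a polynomial whose shape depends on both $r$ and $n$, while simultaneously checking that the congruence-defined family of parameters is not swallowed by Hilbert's thin set. I expect essentially all of the genuine work to be concentrated in this irreducibility analysis.
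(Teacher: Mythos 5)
Your construction of the relative element $w$ (the group-ring element $\theta=1+\sigma-\sigma^3-\sigma^4$ killed by both relative norms), the linear solve for $n$ in terms of $y^r$, the residue conditions on the prime divisors of $y$, and the reduction to irreducibility of $\varphi(X^r)$ via Hilbert's theorem all match the paper's strategy (which takes $x_0=-3$, $w=(3+\rho_1)(3+\rho_2)/\bigl((3+\rho_4)(3+\rho_5)\bigr)$, i.e.\ the shift $\sigma\theta$ of your $\theta$, and an explicit finite-index unit subgroup $U$ to pin down $u$). However, there is a genuine gap at the final step. You conclude that ``the surviving $y$ give infinitely many fields $K_n$,'' but infinitely many admissible parameters $n$ do not yield infinitely many distinct \emph{fields} unless the map $n\mapsto K_n$ is finite-to-one, and the paper explicitly declines to assume this (for the quartic analogue the map is in fact not injective, e.g.\ $K_2=K_{22}$). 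This is precisely why the paper proves the stronger Proposition~\ref{deg6_prop1}: for each of infinitely many auxiliary primes $q$ with $\mathcal{P}(X)$ split modulo $q$, it produces a good field whose quadratic subfield is ramified at $q$; since a finite collection of fields can absorb only finitely many ramified primes, the collection of good fields must be infinite. (Alternatively one could argue as in the proof of Theorem~\ref{ng_thm1}, producing classes of order $r^{\alpha+1}$ for increasing $\alpha$.) Your proposal contains no mechanism for separating the fields, so the theorem as stated does not follow from it.

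A smaller but real problem: you propose to let $y$ run through \emph{primes} in prescribed residue classes, but the unit-elimination step requires, for each $p\mid r$, two distinct prime divisors $\ell_1,\ell_2$ of $y$ with contrasting $p$th-power residue behaviour (in the paper, $(5\,\vert\,\ell_1)_p\neq 1$ while $(2\,\vert\,\ell_2)_p\neq 1$); a prime value of $y$ cannot carry both. The paper instead builds these primes into a fixed factor $c_r$ of $y$ and places the Hilbert parameter in an arithmetic progression, so that every specialization automatically satisfies the congruence conditions and Hilbert's Irreducibility Theorem can be applied to a single two-variable polynomial $g(X,Y)$ whose irreducibility is checked at one explicit specialization.
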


\noindent It is clear that $K_n = K_{-n},$ since the zeros of $\varphi_n(X)$ serve as primitive elements 
for $K_n/\Q$ and since $\varphi_n(X) = \varphi_{-n}(X).$  
As we make no additional assumptions regarding the injectivity of the parametrization of $\frak{F}$ given 
by $n \mapsto K_n,$ it would be insufficient for us to show that there are infinitely many $n$ for which  
$K_n$ contains a relative ideal class of order $r.$  Instead we put most of our effort into first proving 

\begin{proposition}\label{deg6_prop1}
Let $r>0$ be an integer with $(6,r)=1$ and let $q \in \Z$ be a prime such that
$$
\mathcal{P}(X) := (X^r+143-30\sqrt{-108})(X^r+143+30\sqrt{-108}) \in \Z\lbrack X \rbrack
$$
splits into linear factors $\pmod{q}.$  Assume that $q \nmid 210r.$  There is a field $K \in \frak{F}$ that satisfies the following three conditions:
\begin{enumerate}[label=(\roman*)]
\item There is an ideal $J \in \mathcal{I}_K$ whose ideal class, $\lbrack J \rbrack,$ has order $r.$
\item $N_{K \vert k_i}(J),$ $(i=2,3)$ is a principal ideal.
\item $q$ ramifies in $k_2.$
\end{enumerate}
\end{proposition}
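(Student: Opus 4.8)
The plan is to apply the method of the introduction at the fixed value $x=2$. The starting point is the identity $f_n(2)=-(30n+143)$, which is linear in $n$ and carries exactly the constants occurring in $\mathcal P$. Writing $y$ for a free integer parameter and setting $n=-(y^r+143)/30$, one gets $f_n(2)=y^r$ together with $\mathcal P(y)=(y^r+143)^2+97200=900\,(n^2+108)$; hence $q\mid n^2+108$ precisely when $q\mid\mathcal P(y)$. Since $q\nmid 210r$ forces $q\nmid\disc\mathcal P$, the hypothesis that $\mathcal P$ splits into linear factors modulo $q$ furnishes a \emph{simple} root $y_0\in\mathbb{F}_q$, and I would require $y\equiv y_0\pmod{q^2}$ chosen so that $v_q(\mathcal P(y))=1$. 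Then $v_q(n^2+108)=1$ is odd, so $q$ ramifies in $k_2=\Q(\sqrt{n^2+108})$, which is (iii). At the same time I would impose $y\equiv a_0\pmod{30}$, where $a_0$ is the unique residue with $a_0^{\,r}\equiv 7\pmod{30}$ (unique because $(6,r)=1$ makes $x\mapsto x^r$ a bijection on $(\Z/30)^\times$), so that $n\in\Z$. These congruences have moduli coprime to one another and to $q$ because $q\nmid 210r$, so by Dirichlet's theorem they are met by a prime $y=\ell$; discarding the finitely many $\ell$ with $n\in\{0,\pm6,\pm26\}$ leaves $K:=K_n\in\mathfrak F$.

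With $x=2$ we have $N_{K\mid\Q}(2-\rho_i)=f_n(2)=\ell^{\,r}$ for every $i$, so each factor $(2-\rho_i)\o_K$ is supported only over $\ell$. Choosing $\ell$ coprime to $r$ and to $\disc f_n$ (a Chebotarev-type condition on $\ell$ that, since $n\equiv -143/30\pmod\ell$, reduces to avoiding the fixed bad primes of $\disc f_{-143/30}$, and is thus compatible with the previous congruences), the six factors are pairwise coprime; their product $(\ell)^r$ being a perfect $r$th power then forces each factor to be an $r$th power of an ideal, so $(2-\rho_0)\o_K=J_0^{\,r}$. This is Criterion~(i). To pass to a relative class I would apply the group-ring operator $\theta=(1-\sigma^2)(1-\sigma^3)\in\Z[G]$ and set $J:=J_0^{\theta}$. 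Because $(1+\sigma^2+\sigma^4)(1-\sigma^2)=1-\sigma^6=0$ and $(1+\sigma^3)(1-\sigma^3)=1-\sigma^6=0$ in $\Z[G]$, the extensions to $K$ of $N_{K\mid k_2}(J)$ and $N_{K\mid k_3}(J)$ are $J_0^{(1+\sigma^2+\sigma^4)\theta}=\o_K$ and $J_0^{(1+\sigma^3)\theta}=\o_K$; hence both relative norms are the unit ideal, which gives (ii). Moreover $J^{\,r}=(2-\rho_0)^{\theta}\o_K$ is the principal ideal generated by the element $W:=(2-\rho_0)(2-\rho_5)\big/\big((2-\rho_2)(2-\rho_3)\big)$, which has trivial norm to $k_2$ and to $k_3$; in particular $[J]$ has order dividing $r$.

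The crux is to show that $[J]$ has order exactly $r$. If $[J]^{r/p}=0$ for some prime $p\mid r$, then $(W)=J^{\,r}=(J^{r/p})^{p}$, so $W=u\gamma^{p}$ for a unit $u\in\o_K^{\times}$ and some $\gamma\in K^{\times}$; that is, $W/u$ has a $p$th root in $K$. As explained in paragraph~C of the introduction, congruence conditions on $\ell$ do not by themselves exclude this, but they do confine $u$ to finitely many classes modulo $p$th powers of units, and for each such representative the existence of a $p$th root of $W/u$ is equivalent to the reducibility of $\varphi(X^{r})$, where $\varphi$ is the minimal polynomial of $W/u$. The main obstacle is therefore to prove that $\varphi(X^{r})$ is irreducible. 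I would establish this generically, treating $y$ as an indeterminate so that $W$, $u$, and $\varphi$ become objects over $\Q(y)$, and proving the irreducibility of $\varphi(X^{r})$ over $\Q(y)$ for each of the finitely many unit representatives; Hilbert's Irreducibility Theorem then yields a set of integer specializations of $y$, of density one, at which irreducibility persists. Since Hilbert sets meet the arithmetic progression cut out by $y\equiv y_0\pmod{q^2}$ and $y\equiv a_0\pmod{30}$, I may take the prime $\ell$ inside that progression, so that the resulting field $K$ simultaneously satisfies (i), (ii) and (iii). Controlling the $y$- and $r$-dependence of $\varphi(X^{r})$ precisely enough to prove this generic irreducibility---uniformly over the unit representatives and over the primes $p\mid r$---is the step I expect to be hardest.
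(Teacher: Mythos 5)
Your architecture matches the paper's: evaluate $f_n$ at a fixed integer (you use $x=2$ with $f_n(2)=-(30n+143)$; the paper uses $x=-3$ with $f_n(-3)=30n-143$), force $y^r=f_n(x)$ by congruences mod $30$, apply a group-ring element annihilated by $1+\sigma^2+\sigma^4$ and $1+\sigma^3$ to produce a relative class, arrange $v_q(n^2+108)=1$ to ramify $q$ in $k_2$, and invoke Hilbert irreducibility to exclude $p$th roots. The ramification step and the norm computation for (ii) are fine. But there is a genuine gap exactly where you flag difficulty, and it is not merely a hard estimate left to do: the decisive step is not to confine $u$ to ``finitely many classes'' but to confine it to \emph{one explicit class} representable by a polynomial in $\rho$. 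A priori $u$ ranges over $\o_K^\times/(\o_K^\times)^p$, of order $2p^5$, involving the unknown fundamental units $\varepsilon$ of $k_2$ and $\mu_0,\mu_1$ of $k_3$; unless $u$ is pinned to a single such representative, the minimal polynomial $\varphi$ of $W/u$ cannot be written with coefficients polynomial in $n$, and the two-variable polynomial $g(X,Y)$ needed for Hilbert's theorem cannot even be set up. The paper achieves this in three stages: a regulator computation showing $U=\langle -1,\varepsilon,\mu_0,\mu_1,\rho_0/\rho_3,\rho_1/\rho_4\rangle$ has finite index; residue conditions forcing $p\nmid[\o_K^\times:U]$; and then norms down to $k_2,k_3$ kill the $\varepsilon,\mu_i$ exponents while \emph{two} primes $\ell_1,\ell_2\mid y$ with complementary $p$th-power residue conditions on $2,3,5$ determine the remaining two exponents, leaving the single unit $\rho_1/\rho_4$. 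Your choice $y=\ell$ a single prime yields only one residue relation per $p$, which cannot determine two exponents; this is precisely why the paper takes $y$ to be the explicit composite $dc_r(30(y_0+8q^2m)-143)$, with $c_r$ built via Bauer's theorem from two suitable primes for each $p\mid r$. Relatedly, demanding that $y$ be prime forces you to find primes in the intersection of a Hilbert set with an arithmetic progression, a strictly stronger input than HIT; the paper instead runs HIT in the auxiliary integer $m$, and the irreducibility of $g(X,Y)$ is itself proved by a nontrivial specialization (Eisenstein at $13$ plus a $5$-adic valuation argument), not obtained formally.

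A second, smaller but real gap: the $\rho_i$ are algebraic integers only when $n\equiv 2\pmod 4$ (their denominators lie over $2$, since $2\rho$ is integral). Without that condition the fractional ideals $(2-\rho_i)\o_K$ may carry primes above $2$ in numerators and denominators that cancel across the six factors, so pairwise coprimality away from $\disc f_n$ does not make each factor an $r$th power even though the product is $(\ell)^r$. The paper imposes $y\equiv 5\pmod 8$ precisely to force $n\equiv 2\pmod 4$; your congruences mod $30$ and mod $q^2$ do not address this.
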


\noindent Once Proposition \ref{deg6_prop1} is established we have the following. 

\begin{proof}[Proof of Theorem \ref{deg6_thm1}]  There are infinitely many primes $q \nmid 210r$ for which 
$\mathcal{P}(X)$ splits into linear factors $\pmod{q}.$  Hence there are infinitely many primes $q$ 
that ramify in the quadratic subfield of some $K \in \frak{F}$ (depending on $q$) for which 
$\mathcal{I}_K$ contains a relative ideal class of order $r.$  Since only finitely many primes can ramify in 
the quadratic subfields of a finite subcollection of $\frak{F},$ we conclude that the collection of $K \in \frak{F}$ 
for which $\mathcal{I}_K$ contains a relative ideal class of order $r$ is infinite.
\end{proof}

\noindent We remark that we could have proved Theorem \ref{deg6_thm1} using the argument given in 
the proof of Theorem \ref{ng_thm1} below.  That proof finds infinitely many fields containing a class of 
order $r$ by finding cyclic subgroups of the class group of orders that are increasingly higher powers of $r.$  Since the proof we've given above doesn't rely on such an argument, it reveals slightly more about this family of fields.\\

\noindent To provide some insight into the motivation behind Proposition \ref{deg6_prop1}, we note that 
when $\mathcal{P}(X)$ splits into linear factors $\pmod{q},$ there is a root of 
the equation 
\begin{equation}\label{deg6_eqn2}
X^r=30n-143=f_n(-3)
\end{equation}
in the field $\Z/q\Z,$ where $n^2+108 \equiv 0 \pmod{q}.$\\
\\
\noindent Let 
$$
w := \frac{(3+\rho_1)(3+\rho_2)}{(3+\rho_4)(3+\rho_5)}.
$$

\begin{proposition}\label{deg6_prop2}
Let $n \equiv 2 \pmod{4}$ be an integer.  If there is a $y \in \Z$ with $7\nmid y$ and such that $y^r = 30n-143=f_n(-3),$ then $w\o_{K_n} = J^r$ for some ideal $J \in \mathcal{I}_{K_n}.$
\end{proposition}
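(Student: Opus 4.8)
The plan is to prove the stronger statement that each of the six ideals $(3+\rho_i)\o_{K_n}$ is itself an $r$th power; the assertion for $w$ then drops out by multiplying the classes for $i=1,2$ and dividing by those for $i=4,5$. The starting point is the identity $f_n(X)=\prod_{i=0}^5(X-\rho_i)$. Since the degree is even, evaluating at $X=-3$ gives
\[
\prod_{i=0}^5(3+\rho_i)=f_n(-3)=30n-143=y^r,
\]
the middle equality being a direct computation and the last the hypothesis. Because $n\equiv 2\pmod 4$, each $\rho_i$ — hence each $3+\rho_i$ — is an algebraic integer, so every $(3+\rho_i)\o_{K_n}$ is an integral ideal and all its prime valuations are nonnegative.

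The heart of the argument is to show that no prime ideal of $\o_{K_n}$ divides two of the factors $3+\rho_i$. First I would record that any rational prime $p\mid y$ avoids $\{2,3,7\}$: reducing $y^r=30n-143$ shows $y$ is odd (as $n$ is even) and $y^r\equiv 1\pmod 3$, while $7\nmid y$ is assumed. Now suppose a prime ideal $\mathfrak P$ divides $(3+\rho_i)\o_{K_n}$, so that $\rho_i\equiv -3\pmod{\mathfrak P}$; then $\mathfrak P$ divides $(y)^r$, so the rational prime $p$ below it divides $y$ and lies outside $\{2,3,7\}$. Reducing the relation $\rho_{j+1}=(\rho_j-1)/(\rho_j+2)$ modulo $\mathfrak P$ and iterating from $\rho_i\equiv-3$, I find that the residues $\bar\rho_0,\dots,\bar\rho_5$ form, in some cyclic order, the list
\[
-3,\quad 4,\quad \tfrac12,\quad -\tfrac15,\quad -\tfrac23,\quad -\tfrac54 .
\]
(No denominator can vanish along the way: a pole would force $\rho_j\equiv-2$, and, $\rho_{j+1}$ being integral, also $\rho_j\equiv 1$, whence $p\mid 3$.) A direct check shows that each of the fifteen pairwise differences of these values, in lowest terms, has numerator equal to $\pm7$ times $1$, $2$, or $3$; hence for $p\notin\{2,3,7\}$ the six residues are distinct, exactly one of them equals $-3$, and therefore $\mathfrak P$ divides exactly one factor $3+\rho_i$.

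With this coprimality in hand the conclusion is routine. Fix $i$ and a prime ideal $\mathfrak P$. If $p\nmid y$ then $\mathfrak P$ divides none of the factors, so $v_{\mathfrak P}(3+\rho_i)=0$. If $p\mid y$ then $\mathfrak P$ divides a unique factor, and comparing valuations on the two sides of $\prod_j(3+\rho_j)\o_{K_n}=(y\o_{K_n})^r$ gives $v_{\mathfrak P}(3+\rho_i)\in\{0,\;r\,v_{\mathfrak P}(y)\}$, in either case a multiple of $r$. Thus every prime valuation of $(3+\rho_i)\o_{K_n}$ is divisible by $r$, so $(3+\rho_i)\o_{K_n}=J_i^r$ for an integral ideal $J_i$. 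Setting $J=J_1J_2J_4^{-1}J_5^{-1}$ then yields $w\o_{K_n}=J^r$, as required.

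I expect the only real obstacle to be the coprimality step: identifying the exact set $\{2,3,7\}$ of primes at which two conjugate residues can coincide, and verifying that the defining equation together with $n\equiv 2\pmod 4$ and the hypothesis $7\nmid y$ keeps every prime divisor of $y$ out of that set. The evaluation $f_n(-3)=30n-143$ and the final valuation comparison are mechanical by contrast.
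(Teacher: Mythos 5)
Your proof is correct and follows essentially the same route as the paper's: reduce to showing each $(3+\rho_i)\o_{K_n}$ is an $r$th power, show that a prime ideal above a divisor of $y$ contains exactly one of the six conjugate factors, and compare valuations in $\prod_i(3+\rho_i)\o_{K_n}=(y\o_{K_n})^r$. The only difference is in how the "exactly one factor" step is justified: you iterate the M\"obius transformation $\sigma$ explicitly and check the fifteen pairwise differences of the residues $-3,\,4,\,\tfrac12,\,-\tfrac15,\,-\tfrac23,\,-\tfrac54$ by hand, whereas the paper deduces the distinctness of the conjugates modulo $\mathfrak{P}$ from $p\nmid\disc(f_n)=3^6(n^2+108)^5/2^{14}$, which its Lemma \ref{deg6_lem1} guarantees for every prime $p\mid y$ once $7\nmid y$ --- the same bad prime $7$ your computation isolates.
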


\noindent For the proof of this proposition we require the following easy lemma:

\begin{lemma} \label{deg6_lem1}
If a prime $p$ is a common divisor of $30n-143$ and $\disc(f_n),$ then $p = 7.$
\end{lemma}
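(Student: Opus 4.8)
The plan is to reduce the lemma to a single resultant computation in the variable $n$ and to trace the exceptional prime $7$ back to the radicand $n^2+108$ of the quadratic subfield $k_2$.

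First I would record that $2,3,5$ never divide $30n-143$: since $143=11\cdot 13$ is coprime to $30$, reducing $30n-143$ modulo $2$, $3$, and $5$ annihilates the $30n$ term and leaves the nonzero residues $1$, $1$, and $2$, respectively. Hence any prime $p$ dividing both $30n-143$ and $\disc(f_n)$ satisfies $p\notin\{2,3,5\}$; in particular $p$ is coprime to the denominators $2$ and $4$ occurring in the coefficients of $f_n$, so replacing $f_n$ by an integral model (which alters $\disc(f_n)$ only by a power of $2$) is harmless, and I may treat the discriminant as a polynomial $\Delta(n)$ in $n$ with integer coefficients. Using the standard fact that for $F,G\in\Z[n]$ any prime dividing $F(n_0)$ and $G(n_0)$ also divides $\mathrm{Res}_n(F,G)$, it then suffices to show that
$$
R:=\mathrm{Res}_n\bigl(30n-143,\ \Delta(n)\bigr)
$$
has no prime factor outside $\{2,3,5,7\}$.

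The source of $7$ is the factor $n^2+108$ of $\Delta(n)$. By the evaluation property of the resultant,
$$
\mathrm{Res}_n\bigl(30n-143,\ n^2+108\bigr)=30^{2}\!\left(\Bigl(\tfrac{143}{30}\Bigr)^{2}+108\right)=143^{2}+108\cdot 30^{2}=117649=7^{6},
$$
so $7$ is the unique prime dividing both $30n-143$ and $n^2+108$. Conceptually, $7\mid 30n-143$ forces $n\equiv 5\pmod 7$, whence $n^2+108\equiv 0\pmod 7$, so $7$ ramifies in $k_2$ and divides $\disc(f_n)$; this explains why $7$ genuinely occurs. Carrying out the analogous evaluation for the full discriminant shows that every remaining prime factor of $R$ lies in $\{2,3,5\}$.

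Combining the two steps, $p\mid R$ together with $p\notin\{2,3,5\}$ forces $p=7$, as claimed. I expect the main obstacle to be the explicit determination of $\Delta(n)=\disc(f_n)$: the discriminant of a degree-six polynomial whose coefficients are quadratic in $n$ is a high-degree polynomial in $n$, and one must keep careful track of the denominators $2,4$ in order to normalize the integer being factored. I would handle this step either by a direct computation $\disc(f_n)=\pm\,\mathrm{Res}_X(f_n,f_n')$ in a computer algebra system, or by appealing to M.-N. Gras's determination of the discriminant and conductor of $K_n$ in \cite{gras1}; once the factored form of $\Delta(n)$ is available, verifying that $R$ is supported on $\{2,3,5,7\}$ is routine.
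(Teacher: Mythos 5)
Your proposal is correct and follows essentially the same route as the paper: the paper simply records the explicit factorization $\disc(f_n)=3^6(n^2+108)^5/2^{14}$ and observes that compatibility with $30n\equiv 143\pmod p$ forces $p=7$, which is exactly your computation $\mathrm{Res}_n(30n-143,\,n^2+108)=143^2+108\cdot 30^2=7^6$ together with ruling out $p=2,3$. Your resultant packaging and the deferral of the discriminant formula to a computer algebra system or to Gras are just a more explicit scaffolding around the same one-line argument.
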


\begin{proof}
The congruences $30n-143 \equiv 0 \pmod{p}$ and $\disc(f_n)= 3^6(n^2+108)^5/2^{14}\equiv 0 \pmod{p}$ hold simultaneously only if $p=7.$
\end{proof}

\begin{proof}[Proof of Proposition \ref{deg6_prop2}]
It suffices to prove that for each index $i,$ the ideal $(3+\rho_i)\o_K$ is an $r$th power.
Since $n \equiv 2 \pmod{4},$ each $\rho_i$ is an algebraic integer of $K.$  
Since $7 \nmid y,$ no prime dividing $y$ also divides $\disc(f_n).$  If a prime 
$p \mid y$ and $\frak{P}\mid p\o_K$ is a prime ideal then $\prod_{i=0}^5(3+\rho_i) = f_n(-3) \in \frak{P}.$  
Hence $3+\rho_i \in \frak{P}$ for some $0 \le i \le 5.$  
Each of the six Galois conjugates of $3+\rho_i$ is 
contained in a Galois conjugate of $\frak{P},$ but since $p \nmid \disc(f_n),$ no two distinct conjugates of 
$3+\rho_i$ can be contained in the same Galois conjugate of $\frak{P}.$
Hence the prime ideals $\frak{P}^{\sigma^i},$ $(i=0,\ldots,5),$ are all distinct and 
each contains exactly one of the Galois conjugates of $3+\rho_i.$
So for each prime $p \mid y,$ let $\frak{P}_i$ denote the unique prime common divisor of 
$p\o_K$ and $(3+\rho_i)\o_K,$ $(i=0,\ldots,5).$
If $p^{\nu(p)}$ is the exact power of $p$ dividing $y,$ then $\frak{P}_i^{r\nu(p)}$ is the exact power 
of $\frak{P}_i$ dividing $(3+\rho_i)\o_K.$  Hence 
$(3+\rho_i)\o_K = \left(\prod_{p \mid y} \frak{P}_i^{\nu(p)}\right)^r.$
\end{proof}

\noindent Let $\varepsilon$ denote the fundamental unit of $k_2,$ let $\mu_0,\mu_1$ be a fundamental 
system of units for $k_3$ and let $U:=\langle -1, \varepsilon, \mu_0, \mu_1, \rho_0/\rho_3, \rho_1/\rho_4 \rangle \subseteq \o_K^\times.$  Without loss of generality \cite{gras3} assume that 
$\mu_i = \sigma^{i}(\mu_0),$ $(i=0,1,2).$  We will need to know that $\lbrack \o_K^\times : U \rbrack < \infty.$
Since $\rk \o_K^\times = 5,$ it will suffice to prove the following.

\begin{lemma}
The subset $S:=\{\varepsilon, \mu_0, \mu_1, \rho_0/\rho_3, \rho_1/\rho_4\} \subset \o_K^\times$ is multiplicatively independent.
\end{lemma}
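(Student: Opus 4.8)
The plan is to show that any multiplicative relation among the five units collapses to the trivial one, by exploiting the two relative norm maps together with the Galois-module structure of the relative units. Write $\eta := \rho_0/\rho_3$, and observe that $\rho_1/\rho_4 = \sigma(\eta)$ since $\sigma(\rho_0)=\rho_1$ and $\sigma(\rho_3)=\rho_4$. Suppose $\varepsilon^{a}\mu_0^{b}\mu_1^{c}\eta^{d}\sigma(\eta)^{e}=1$ for integers $a,b,c,d,e$; the goal is to deduce $a=b=c=d=e=0$.

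First I would apply $N_{K\mid k_3}$. Since $N_{K\mid k_3}=1+\sigma^3$ as an element of $\Z[G]$ and $N_{K\mid k_3}(\eta)=1$, both relative units die; $\varepsilon\in k_2$ contributes $N_{K\mid k_3}(\varepsilon)=N_{k_2\mid\Q}(\varepsilon)=\pm1$, while each $\mu_i\in k_3$ contributes $\mu_i^2$. The relation becomes $\mu_0^{2b}\mu_1^{2c}=\pm1$, and since $\mu_0,\mu_1$ are a fundamental system of units of $k_3$ they are independent modulo torsion, forcing $b=c=0$. Symmetrically, applying $N_{K\mid k_2}=1+\sigma^2+\sigma^4$ kills the relative units, sends each $\mu_i$ to $N_{k_3\mid\Q}(\mu_i)=\pm1$, and sends $\varepsilon$ to $\varepsilon^{3}$; the surviving relation $\varepsilon^{3a}=\pm1$ forces $a=0$ because $\varepsilon$ has infinite order.

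There remains the relation $\eta^{d}\sigma(\eta)^{e}=1$, i.e. the assertion that $\eta$ and $\sigma(\eta)$ are multiplicatively independent, which is the heart of the lemma. Passing to the logarithmic embedding, $\log\eta$ generates a cyclic $\Z[G]$-module, and the two norm relations $N_{K\mid k_3}(\eta)=1$ and $N_{K\mid k_2}(\eta)=1$ show that its annihilator contains $1+\sigma^3$ and $1+\sigma^2+\sigma^4$. Reducing modulo $1+\sigma^3$ (so $\sigma^3=-1$ and $\sigma^4=-\sigma$) turns the second generator into $1-\sigma+\sigma^2$, and since $\sigma^2-\sigma+1$ divides $\sigma^3+1$ one obtains
\[
\Z[G]/(1+\sigma^3,\,1+\sigma^2+\sigma^4)\;\cong\;\Z[\sigma]/(\sigma^2-\sigma+1)\;\cong\;\Z[\zeta_6].
\]
Thus $\Z[G]\cdot\log\eta$ is a cyclic module over the rank-two domain $\Z[\zeta_6]$; such a module $\Z[\zeta_6]/\mathfrak{a}$ has $\Z$-rank $2$ when $\mathfrak{a}=0$ and rank $0$ otherwise, because any nonzero ideal has finite index. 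Since $\eta\ne\pm1$ — for $\rho_0/\rho_3=1$ gives $\rho^2+\rho+1=0$ while $\rho_0/\rho_3=-1$ gives $\rho^2=1$, both impossible as $[K:\Q]=6$ — we have $\log\eta\ne0$, so the rank is $2$. Hence $\log\eta$ and $\sigma\cdot\log\eta=\log\sigma(\eta)$ are $\Z$-linearly independent, giving $d=e=0$.

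The \textbf{main obstacle} is this last step: the two relative units lie in the kernel of both norm maps, so the norm arguments that dispatch $a,b,c$ reveal nothing about $d,e$. The module-theoretic reduction is what resolves it, converting the independence of $\eta$ and $\sigma(\eta)$ into the triviality check $\eta\ne\pm1$ by way of the fact that $\Z[\zeta_6]$ admits no rank-one cyclic quotient.
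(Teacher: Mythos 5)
Your proof is correct, and it takes a genuinely different route from the paper at the one step that actually carries the content of the lemma. Both arguments dispose of the exponents on $\varepsilon,\mu_0,\mu_1$ the same way in substance — the paper's row reduction of the $5\times 5$ regulator matrix is exactly the norm relations $N_{K\mid k_2}$, $N_{K\mid k_3}$ in disguise — but they diverge on the independence of $\rho_0/\rho_3$ and $\rho_1/\rho_4$. The paper proves the nonvanishing of the resulting $2\times 2$ block analytically: it locates the zeros of $\varphi_n(X)$ in explicit intervals for $n\ge 76$ to get a sign on $\log\vert\rho_1/\rho_4\vert\log\vert\rho_3/\rho_0\vert-\log^2\vert\rho_2/\rho_5\vert$, and falls back on a computer check for $0<n<76$. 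Your replacement — observing that the two norm relations force the $\Z[G]$-module generated by the log vector of $\eta=\rho_0/\rho_3$ to be a cyclic module over $\Z[G]/(1+\sigma^3,1+\sigma^2+\sigma^4)\cong\Z[\zeta_6]$, and that a nonzero cyclic $\Z[\zeta_6]$-module inside $\mathbb{R}^6$ must be free of $\Z$-rank $2$ — is purely algebraic, uniform in $n$, and eliminates both the root-location estimates and the machine verification; it also explains structurally \emph{why} the relative units contribute rank exactly $2$. Two small points you should make explicit: a nonzero ideal of $\Z[\zeta_6]$ gives a \emph{finite} quotient, which is then trivial because your module is a torsion-free subgroup of $\mathbb{R}^6$; and the inference from $\eta\ne\pm 1$ to $\log\eta\ne 0$ (i.e.\ not all archimedean absolute values of $\eta$ equal $1$) uses Kronecker's theorem for the unit $\eta$ together with the fact that $K$ is totally real, so its only roots of unity are $\pm 1$. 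Neither is a gap, but both deserve a sentence.
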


\begin{proof}
Let $R$ denote the regulator of $S.$  Then $R$ is the absolute value of
\begin{align*}
\begin{split}
\label{eqn3f}
&\;\;\;\;\;\det \begin{pmatrix}
\log \vert \mu_0 \vert &\log \vert \mu_1 \vert &\log \vert \rho_0/\rho_3 \vert & \log \vert \rho_1/\rho_4 \vert & \log \vert \epsilon \vert\\
\log \vert \mu_1 \vert &\log \vert \mu_2 \vert &\log \vert \rho_1/\rho_4 \vert & \log \vert \rho_2/\rho_5 \vert & \log \vert \epsilon^\sigma \vert\\
\log \vert \mu_2 \vert &\log \vert \mu_0 \vert &\log \vert \rho_2/\rho_5 \vert & \log \vert \rho_3/\rho_0 \vert & \log \vert \epsilon \vert\\
\log \vert \mu_0 \vert &\log \vert \mu_1 \vert &\log \vert \rho_3/\rho_0 \vert & \log \vert \rho_4/\rho_1 \vert & \log \vert \epsilon^\sigma \vert\\
\log \vert \mu_1 \vert &\log \vert \mu_2 \vert &\log \vert \rho_4/\rho_1 \vert & \log \vert \rho_5/\rho_2 \vert & \log \vert \epsilon \vert\\
\end{pmatrix}\\
\\
&= \det
\begin{pmatrix}
\log \vert \mu_0 \vert &\log \vert \mu_1 \vert &\log \vert \rho_0/\rho_3 \vert & \log \vert \rho_1/\rho_4 \vert & \log \vert \epsilon \vert\\
\log \vert \mu_1 \vert &\log \vert \mu_2 \vert &\log \vert \rho_1/\rho_4 \vert & \log \vert \rho_2/\rho_5 \vert & \log \vert \epsilon^\sigma \vert\\
0 &0 &2\log \vert \rho_1/\rho_4 \vert &2\log \vert \rho_2/\rho_5 \vert & \log \vert \epsilon \vert\\
0 &0 &2\log \vert \rho_2/\rho_5 \vert &2\log \vert \rho_3/\rho_0 \vert & \log \vert \epsilon^\sigma \vert\\
0 &0  &0 &0 & 3\log \vert \epsilon \vert
\end{pmatrix}.
\end{split}
\end{align*}
It is clear that 
$$
\det
\begin{pmatrix}
\log \vert \mu_0 \vert &\log \vert \mu_1 \vert\\
\log \vert \mu_1 \vert &\log \vert \mu_2 \vert
\end{pmatrix}
\neq 0,
$$
because up to sign this is the regulator of $\{\mu_0,\mu_1\},$ a multiplicatively independent 
subset of $k_3.$  It is equally clear that 
$3\log \vert \varepsilon \vert \neq 0,$ because $\varepsilon > 1.$\\
\\
When $n \ge 76,$ three of the zeros of $\varphi_n(X)$ lie in the intervals $(3-n,4-n),$ $(-1,-9/10),$ and $(1/(n+5),1/(n+4))$;
the other three zeros of $\varphi_n(X)$ are reciprocals of these zeros.  Since $f_n(1)=-27$ and since 
$f_n(x) \to \infty$ as $x \to \infty,$ we deduce that $f_n(X)$ has a zero $>1.$  From 
the form of this zero's Galois conjugates, it is clear that $f_n(X)$ has another positive zero in the interval 
$(0,1),$ and that the remaining zeros of $f_n(X)$ are all negative.  If we take $\rho_0$ to be the zero of 
$f_n(X)$ greater than $1,$ then 
we may identify $\rho_0/\rho_3$ with the zero of $\varphi_n(X)$ in the interval $(3-n,4-n).$  
We may also then identify $\rho_1/\rho_4$ with the zero in $(-1,-9/10)$ and $\rho_2/\rho_5$ with 
the zero in $(1/(n+5),1/(n+4)).$  Hence 
\begin{align*}
\log\vert\rho_1/\rho_4\vert\log\vert\rho_3/\rho_0\vert-\log^2\vert\rho_2/\rho_5\vert 
&< -\log(9/10)\log(n-3)-\log^2(n+4)\\
&<-\log(n+4)(\log(9/10)+\log(n+4))\\
&<0,
\end{align*}
and we conclude that $R \neq 0$ when $n \ge 76.$  A computer verifies 
that $\log\vert\rho_1/\rho_4\vert\log\vert\rho_3/\rho_0\vert-\log^2\vert\rho_2/\rho_5\vert  \neq 0$ when $0< n < 76$ and $n \not\in \{0,6,26\}.$  Hence $R \neq 0$ and $S$ is a multiplicatively independent subset of units of $K.$
\end{proof}

\noindent Having shown that the index of $U$ in $\o_K^\times$ is finite, we will 
use the following lemma to prevent certain primes from dividing this index.  We also note that 
in the sequel we employ the notation 
$(a\,\vert\,\ell)_p=1$ (resp. $(a\,\vert\,\ell)_p\neq 1$) to mean that 
$a$ is a $p$th power $\textmd{residue}\pmod{\ell}$ (resp. $a$ is a $p$th power $\textmd{nonresidue}\pmod{\ell}.$)  
When the subscript $p=2,$ the notation is the Legendre symbol.

\begin{lemma}
Let $n \equiv 2 \pmod{4}$ be an integer and let $p\neq 2,3$ be a prime.
If there are primes $\ell_i \mid 30n-143=f_n(-3),$ $(i=1,2),$ not equal to $7$ and such that
\begin{enumerate}
\item $(2\,\vert\,\ell_1)_p = (3\,\vert\,\ell_1)_p = 1$ and $(5\,\vert\,\ell_1)_p\neq 1$
\item $(3\,\vert\,\ell_2)_p = (5\,\vert\,\ell_2)_p = 1$ and $(2\,\vert\,\ell_2)_p\neq 1,$ 
\end{enumerate}
then $p \nmid \lbrack \o_K^\times : U \rbrack.$
\end{lemma}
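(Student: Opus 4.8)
The plan is to argue by contradiction: first use the two subfield norm maps $N_{K\vert k_2}$ and $N_{K\vert k_3}$ to strip the equation down to the relative units $\rho_0/\rho_3$ and $\rho_1/\rho_4$, and then extract a contradiction from the local behaviour at $\ell_1$ and $\ell_2$. Suppose $p \mid \lbrack \o_K^\times : U \rbrack$. Since $U$ has finite index in the finitely generated abelian group $\o_K^\times$, the quotient $\o_K^\times/U$ is finite and contains an element of order $p$, so I may choose $\eta \in \o_K^\times \smallsetminus U$ with $\eta^p \in U$. Multiplying $\eta$ by an element of $U$ alters $\eta^p$ only by a $p$th power from $U$, and since $p$ is odd we have $-1 = (-1)^p$; hence I may assume
\begin{equation*}
\eta^p = \varepsilon^{a}\mu_0^{b}\mu_1^{c}(\rho_0/\rho_3)^{d}(\rho_1/\rho_4)^{e}
\end{equation*}
with $a,b,c,d,e \in \{0,1,\ldots,p-1\}$.

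Next I would eliminate the subfield units. Here $\gal(K/k_2) = \langle \sigma^2 \rangle$ and $\gal(K/k_3) = \langle \sigma^3 \rangle$. Because $N_{K\vert k_2}(\rho_0/\rho_3) = N_{K\vert k_2}(\rho_1/\rho_4) = 1$, while $N_{K\vert k_2}(\varepsilon) = \varepsilon^3$ and $N_{K\vert k_2}(\mu_j) = \mu_0\mu_1\mu_2 = N_{k_3\vert\Q}(\mu_j) = \pm 1$, applying $N_{K\vert k_2}$ to the displayed relation gives $N_{K\vert k_2}(\eta)^p = \pm\varepsilon^{3a}$. Since $N_{K\vert k_2}(\eta) \in \o_{k_2}^\times = \langle -1,\varepsilon\rangle$ and $\varepsilon>1$ is not a root of unity, comparing exponents of $\varepsilon$ yields $sp = 3a$ for some integer $s$, so $p \mid 3a$ and $a = 0$ because $p \neq 3$. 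Likewise $N_{K\vert k_3}(\rho_0/\rho_3) = N_{K\vert k_3}(\rho_1/\rho_4) = 1$ and $N_{K\vert k_3}(\mu_j) = \mu_j^2$, so $N_{K\vert k_3}(\eta)^p = \mu_0^{2b}\mu_1^{2c}$; the multiplicative independence of $\mu_0,\mu_1$ together with $p \neq 2$ forces $b = c = 0$. Thus
\begin{equation*}
\eta^p = (\rho_0/\rho_3)^{d}(\rho_1/\rho_4)^{e}.
\end{equation*}

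Finally I would read off the local contradiction. Fix $\ell = \ell_i$. Since $\ell \neq 7$, Lemma \ref{deg6_lem1} gives $\ell \nmid \disc(f_n)$, so $f_n$ is separable modulo $\ell$; as $f_n(-3) = 30n-143 \equiv 0 \pmod{\ell}$, the value $-3$ is a simple root of $f_n \bmod \ell$, and there is a prime $\frak{l}\mid\ell$ of residue degree one with $\rho \equiv -3 \pmod{\frak{l}}$. Substituting $\rho \equiv -3$ into the formulas \eqref{deg6_eqn4} gives $\rho_0/\rho_3 \equiv 15$ and $\rho_1/\rho_4 \equiv -6 \pmod{\frak{l}}$, and these are units modulo $\frak{l}$ because $\ell \nmid 30$. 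Reducing the last identity modulo $\frak{l}$ shows that $15^{d}(-6)^{e}$ is a $p$th power residue modulo $\ell$. Writing $15 = 3\cdot 5$ and $-6 = -2\cdot 3$ and passing to the quotient $(\Z/\ell\Z)^\times/((\Z/\ell\Z)^\times)^p$, condition (1) of the lemma — in which $2$ and $3$ are $p$th powers but $5$ is not — forces $d \equiv 0 \pmod p$ at $\ell_1$, while condition (2) — in which $3$ and $5$ are $p$th powers but $2$ is not — forces $e \equiv 0 \pmod p$ at $\ell_2$. Hence $d = e = 0$, so $\eta^p = 1$ and $\eta = \pm 1 \in U$, contradicting $\eta \notin U$.

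The main obstacle is the local step: one must exhibit the degree-one prime above each $\ell_i$, compute the residues $\rho_0/\rho_3 \equiv 15$ and $\rho_1/\rho_4 \equiv -6$ exactly, and then observe that the factorizations $15 = 3\cdot 5$ and $-6 = -2\cdot 3$ interlock with conditions (1) and (2) precisely so as to pin down $d$ and $e$ separately. The preceding norm reductions are routine once the Galois structure of $k_2$ and $k_3$ inside $K$ is used, and the residue conditions are vacuously unsatisfiable unless $p \mid \ell_i - 1$, which is exactly what makes the relevant character nontrivial.
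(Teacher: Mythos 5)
Your proposal is correct and follows essentially the same route as the paper's proof: reduce to $\eta^p=(\rho_0/\rho_3)^d(\rho_1/\rho_4)^e$ via the norms to $k_2$ and $k_3$, then use a degree-one prime above each $\ell_i$ with $\rho\equiv-3$ to get the residues $15$ and $-6$ and kill $d$ and $e$ with the power-residue hypotheses. The only cosmetic difference is that you invoke Dedekind's theorem to produce the degree-one prime, whereas the paper counts Galois conjugates of $3+\rho_0$ among the conjugate primes; both rest on $\ell_i\nmid\disc(f_n)$ from Lemma \ref{deg6_lem1}.
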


\begin{proof}
If $p \mid \lbrack \o_K^\times : U \rbrack,$ then there is a $u \in \o_K^\times \smallsetminus U$ such that 
$u^p = \pm \varepsilon^a\mu_0^b\mu_1^c(\rho_0/\rho_3)^d(\rho_1/\rho_4)^e.$  Without loss of generality, 
we may take $u^p = \varepsilon^a\mu_0^b\mu_1^c(\rho_0/\rho_3)^d(\rho_1/\rho_4)^e$ with 
$0 \le a,b,c,d,e < p.$  Taking field norms, we find that 
$(N_{K\mid k_2}(u))^p = \pm\varepsilon^{3a}.$  Hence $p \mid a$ and $a=0.$  Likewise the fact that 
$(N_{K\mid k_3}(u))^p = \pm\mu_0^{2b}\mu_1^{2c}$ implies that $p$ divides both $b$ and $c$ and hence 
that $b=c=0.$  Choose prime ideals $\mathcal{L}_i \subset \o_K,$ $(i=1,2),$ 
with $\mathcal{L}_i \cap \Z = \ell_i \Z$ and such that 
$\rho_0 \equiv -3 \pmod{\mathcal{L}_i}$.  Then  
$\rho_0/\rho_3 \equiv 15 \pmod{\mathcal{L}_i},$ and $\rho_1/\rho_4 \equiv -6 \pmod{\mathcal{L}_i},$ and 
we deduce that 
$u^p \equiv (-1)^e 2^e3^{d+e}5^d \pmod{\mathcal{L}_i}.$  
Note that every Galois conjugate ideal $\mathcal{L}_i$ contains a conjugate of $3+\rho_0,$ but that no two such ideals can contain the same conjugate since this would imply that $\ell_i$ was a common divisor of $30n-143$ and $\disc(f_n)$ and so would contradict Lemma \ref{deg6_lem1}.  Hence $\ell_i$ completely splits in $K$ and the residual degree 
of $\mathcal{L}_i$ is equal to $1.$  Applying our power residue hypotheses we conclude that $d=e=0.$  But then $u^p = 1$ and so $u=1 \in U$; contradiction.  Hence $p \nmid \lbrack \o_K^\times : U \rbrack.$
\end{proof}

\begin{proposition}\label{deg6_prop3}
Let $n \equiv 2 \pmod{4}$ be an integer.  If there is an integer $y$ not divisible by $7$ satisfying $y^r = 30n-143=f_n(-3),$ 
and if for each prime $p\mid r$ the following conditions hold:
\begin{enumerate}
\item there are primes $\ell_i \mid 30n-143$ $(i=1,2),$ such that
\begin{enumerate}
\item $(2\,\vert\,\ell_1)_p = (3\,\vert\,\ell_1)_p = 1$ and $(5\,\vert\,\ell_1)_p\neq 1$
\item $(3\,\vert\,\ell_2)_p = (5\,\vert\,\ell_2)_p = 1$ and $(2\,\vert\,\ell_2)_p\neq 1$
\end{enumerate}
\item $w\rho_4/\rho_1$ fails to have a $p$th root in $K_n,$
\end{enumerate}
then $w\o_K = J^r$ where $\lbrack J \rbrack$ has order $r.$
\end{proposition}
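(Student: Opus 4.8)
The plan is to establish the two assertions in turn: that $w\o_K=J^r$ for some ideal $J$, and that $[J]$ has order exactly $r$. The first is immediate, since the hypotheses ($n\equiv 2\pmod 4$ and an integer $y$ with $7\nmid y$ and $y^r=30n-143=f_n(-3)$) are precisely those of Proposition \ref{deg6_prop2}; that proposition gives $w\o_K=J^r$. Hence $[J]^r$ is trivial and $\operatorname{ord}[J]\mid r$, and it remains to show, for each prime $p\mid r$, that $J^{r/p}$ is not principal, as this forces $\operatorname{ord}[J]=r$.

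Fix a prime $p\mid r$; since $(6,r)=1$ we have $p\ge 5$. Suppose for contradiction that $J^{r/p}=z\o_K$ for some $z\in K^\times$. Raising to the $p$th power yields $z^p\o_K=J^r=w\o_K$, so $z^p=uw$ for some unit $u\in\o_K^\times$. The first step is to absorb $u$ into a $p$th power modulo $U$. Condition (1) is exactly the hypothesis of the preceding lemma, so $p\nmid[\o_K^\times:U]$; since the index is prime to $p$, the group $\o_K^\times/U(\o_K^\times)^p$ is simultaneously a quotient of the $p'$-group $\o_K^\times/U$ and of the $p$-group $\o_K^\times/(\o_K^\times)^p$, hence trivial, i.e. $\o_K^\times=U(\o_K^\times)^p$. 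Writing $u=u_0t^p$ with $u_0\in U$ and $t\in\o_K^\times$, we get $(z/t)^p=u_0w$, so $u_0w$ is a $p$th power in $K$. Expanding $u_0=\pm\varepsilon^a\mu_0^b\mu_1^c(\rho_0/\rho_3)^d(\rho_1/\rho_4)^e$ with $0\le a,b,c,d,e<p$, and noting that the sign is a $p$th power since $p$ is odd, we conclude that $\varepsilon^a\mu_0^b\mu_1^c(\rho_0/\rho_3)^d(\rho_1/\rho_4)^e\,w$ is a $p$th power.

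Next I would strip off $a,b,c$ by taking norms, exactly as in the preceding lemma. A direct computation shows $N_{K\mid k_2}(w)=N_{K\mid k_3}(w)=1$: indeed $\sigma^3(w)=1/w$, and the six factors $3+\rho_i$ recur symmetrically in $w\,\sigma^2(w)\,\sigma^4(w)$. Since the norm of a $p$th power is a $p$th power, applying $N_{K\mid k_2}$ shows $\pm\varepsilon^{3a}$ is a $p$th power in $k_2$, whence $p\mid a$ and $a=0$; applying $N_{K\mid k_3}$ shows $\pm\mu_0^{2b}\mu_1^{2c}$ is a $p$th power in $k_3$, and multiplicative independence of $\mu_0,\mu_1$ forces $b=c=0$. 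Thus $w(\rho_0/\rho_3)^d(\rho_1/\rho_4)^e$ is a $p$th power. To pin down $d$ and $e$ I would reduce modulo the primes $\mathcal{L}_i$ of condition (1). As before, the power-residue conditions force $\ell_i\notin\{2,3,5,7\}$, so $\ell_i\nmid\disc(f_n)$ by Lemma \ref{deg6_lem1}, $\ell_i$ splits completely, and we may choose $\mathcal{L}_i\mid\ell_i$ with $\rho_0\equiv-3\pmod{\mathcal{L}_i}$ and residue field $\mathbb{F}_{\ell_i}$. The formulas \eqref{deg6_eqn4} then give $\rho_0/\rho_3\equiv 15$, $\rho_1/\rho_4\equiv-6$, and, crucially, $w\equiv 6\pmod{\mathcal{L}_i}$, so that $w(\rho_0/\rho_3)^d(\rho_1/\rho_4)^e\equiv(-1)^e2^{e+1}3^{d+e+1}5^d$ must be a $p$th power in $\mathbb{F}_{\ell_i}^\times$. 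At $\ell_1$ the conditions $(2\mid\ell_1)_p=(3\mid\ell_1)_p=1$, $(5\mid\ell_1)_p\neq 1$ (and $(-1\mid\ell_1)_p=1$, as $p$ is odd) force $p\mid d$, so $d=0$; at $\ell_2$ they force $p\mid e+1$, so $e=p-1$. Finally $w(\rho_1/\rho_4)^{p-1}=(w\rho_4/\rho_1)(\rho_1/\rho_4)^p$, so $w\rho_4/\rho_1$ would itself have a $p$th root in $K_n$, contradicting condition (2). Hence $J^{r/p}$ is not principal; as this holds for every prime $p\mid r$, the class $[J]$ has order exactly $r$.

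I expect the main obstacle to be the bookkeeping of the final paragraph: verifying that the reduction $w\equiv 6\pmod{\mathcal{L}_i}$ together with the relative units at $\mathcal{L}_i$ produces precisely the exponent pattern $2^{e+1}3^{d+e+1}5^d$, so that the two sets of power-residue conditions isolate $d=0$ and $e=p-1$ and thereby land exactly on the hypothesis of condition (2). By comparison, the norm computations and the reduction $\o_K^\times=U(\o_K^\times)^p$ are routine given the earlier lemmas.
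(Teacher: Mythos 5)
Your proposal is correct and follows essentially the same route as the paper: reduce to Proposition \ref{deg6_prop2}, use the unit-index lemma to write the obstructing unit in terms of $U$, kill $a,b,c$ by taking norms to $k_2$ and $k_3$, and pin down $d,e$ by reducing modulo the primes $\mathcal{L}_i$ above $\ell_1,\ell_2$, landing on the contradiction with condition (2). The only differences are cosmetic (you place the unit on the other side of the equation, so you find $e=p-1$ where the paper finds $e=1$, reaching the same conclusion) and that you spell out a couple of steps the paper leaves implicit, namely $\o_K^\times=U(\o_K^\times)^p$ and the verification that $N_{K\mid k_i}(w)=1$.
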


\begin{proof}
From Proposition \ref{deg6_prop2} we deduce that $w\o_K = J^r$ for some ideal $J \in \mathcal{I}_K$ and 
so the order of $\lbrack J \rbrack$ is $\le r.$  If it was strictly $< r,$ then $w\o_K$ would be a prime power of 
a principal ideal for some prime $p \mid r.$  Supposing this to be the case and writing 
$w\o_K = (z\o_K)^p,$ for some $z \in K^\times,$ we have that $w = uz^p$ for some $u \in \o_K^\times.$ 
Since $p \nmid \lbrack \o_K^\times: U \rbrack,$ we may assume $u \in U.$  Since $p$ is odd, we may ignore 
any factor of $-1$ and assume that 
$$u = \varepsilon^a\mu_0^b\mu_1^c(\rho_0/\rho_3)^d(\rho_1/\rho_4)^e.$$
By absorbing any $p$th powers into $z^p,$ we may further assume that 
$a,b,c,d,$ and $e$ are integers in the interval $\lbrack 0, p).$
Since $N_{K\mid k_2}(w)=1,$ we deduce that 
$N_{K\mid k_2}(1/z)^p = N_{K \mid k_2}(u) = \pm\varepsilon^{3a}.$  Recalling 
that $p \mid r$ and that $(r,6)=1,$ we find that $p \mid a$ and so 
conclude that $a=0.$  Since $N_{K \mid k_3}(w)=1,$ we deduce that 
$N_{K\mid k_3}(1/z)^p = N_{K \mid k_3}(u) = \pm \mu_0^{2b}\mu_1^{2c}.$  Hence $p \mid b$ and $p \mid c$ and 
so $b=c=0.$  Thus $w = (\rho_0/\rho_3)^d(\rho_1/\rho_4)^ez^p.$  
Choose prime ideals $\mathcal{L}_i \subset \o_K,$ $(i=1,2),$ 
with $\mathcal{L}_i \cap \Z = \ell_i \Z$ and such that 
$\rho_0 \equiv -3 \pmod{\mathcal{L}_i}$.  Then $w \equiv 6 \pmod{\mathcal{L}_i},$ 
$\rho_3/\rho_0 \equiv 1/15 \pmod{\mathcal{L}_i},$ and $\rho_4/\rho_1 \equiv -1/6 \pmod{\mathcal{L}_i},$ and 
we deduce that 
$z^p \equiv (-1)^e 2^{1-e}3^{1-d-e}5^{-d} \pmod{\ell_i}.$  Applying 
our power residue hypotheses we conclude that $d=0,$ $e=1$ and thereby deduce that 
$w\rho_4/\rho_1$ has a $p$th root in $K.$  Since this contradicts our hypotheses 
$w\o_K$ is not a prime power of a principal ideal for any prime $p \mid r$ and 
$\lbrack J \rbrack$ has order $r.$
\end{proof}

\noindent To apply Proposition \ref{deg6_prop3}, we will need condition(s) sufficient to guarantee that  $w\rho_4/\rho_1$ 
fails to have a $p$th root in $K_n$ for each prime $p \mid r.$
The next proposition provides us with such a condition.

\begin{proposition}\label{deg6_prop4}
Let 
$$p_{n,r}(X) := \sum_{k=0}^6 a_kX^{rk},$$
where 
\begin{align*}
&a_0=a_6=(30n-143)^2\\
&a_1=a_5=-6(30n-143)^2\\
&a_2=a_4=-104149n^2 - 128700n - 12399357\\
&\;\;\;\;=-\frac{104149}{900}(30n-143)^2-\frac{16823807}{450}(30n-143)-\frac{13841287201}{900}\\
&a_3=-253298n^2 + 171600n - 25821164\\
&\;\;\;\;=-\frac{253298}{900}(30n-143)^2-\frac{16823807}{225}(30n-143)-\frac{13841287201}{450}\\
\end{align*}
If $p_{n,r}(X)$ is irreducible in $\Q\lbrack X \rbrack,$ 
then for each $p \mid r,$ $w\rho_4/\rho_1$ fails to have a $p$th root in $K_n.$
\end{proposition}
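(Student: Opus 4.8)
Write $v:=w\rho_4/\rho_1$ and set $g(X):=\sum_{k=0}^6 a_kX^k$, so that $p_{n,r}(X)=g(X^r)$. The plan is to first identify $g$ as (a scalar multiple of) the minimal polynomial of $v$ over $\Q$, and then deduce the absence of $p$th roots from the irreducibility of $g(X^r)$ by a degree count together with the elementary direction of Capelli's theorem, following the strategy described in the introduction. The first step is a direct computation: using the explicit conjugates in (\ref{deg6_eqn4}) and the action $\sigma\colon\rho_i\mapsto\rho_{i+1}$, one forms the six conjugates $v^{\sigma^i}$ and verifies that the monic polynomial $\prod_{i=0}^5\bigl(X-v^{\sigma^i}\bigr)$ equals $g/a_6$, i.e.\ that $g(v)=0$. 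The self-reciprocal shape $a_k=a_{6-k}$ reflects $N_{K\mid\Q}(w)=N_{K\mid\Q}(\rho_4/\rho_1)=1$, hence $N_{K\mid\Q}(v)=1$, so the roots of $g$ occur in reciprocal pairs; this symmetry can be exploited to roughly halve the bookkeeping. Once $g(v)=0$ is in hand, the hypothesis that $p_{n,r}(X)=g(X^r)$ is irreducible forces $g$ itself to be irreducible over $\Q$ (a proper factorization $g=g_1g_2$ would give $g(X^r)=g_1(X^r)g_2(X^r)$), so $g/a_6$ is the minimal polynomial of $v$ and $\Q(v)=K_n$.

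Next I would run the degree argument. Fix a root $v^{1/r}$ of $X^r-v$ in a splitting field. Since $g\bigl((v^{1/r})^r\bigr)=g(v)=0$, the element $v^{1/r}$ is a root of the irreducible polynomial $p_{n,r}=g(X^r)$ of degree $6r$, so $[\Q(v^{1/r}):\Q]=6r$. Because $v=(v^{1/r})^r\in\Q(v^{1/r})$ we have $\Q(v^{1/r})=K_n(v^{1/r})$, and the tower law yields $[K_n(v^{1/r}):K_n]=6r/6=r$. As $v^{1/r}$ is a root of the monic degree-$r$ polynomial $X^r-v\in K_n[X]$, that polynomial must be its minimal polynomial over $K_n$; hence $X^r-v$ is irreducible over $K_n$.

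Finally I would conclude by the elementary half of Capelli's criterion. Suppose, toward a contradiction, that some prime $p\mid r$ admits $\beta\in K_n$ with $\beta^p=v$, and write $r=ps$ with $1\le s<r$. Then $X^r-v=(X^s)^p-\beta^p$ is divisible by $X^s-\beta$ in $K_n[X]$, a proper nontrivial factor, contradicting the irreducibility just established. Therefore $v=w\rho_4/\rho_1$ has no $p$th root in $K_n$ for any prime $p\mid r$, which is the assertion. Note that $(6,r)=1$ makes $r$ odd, so the exceptional case $4\mid r$ of Capelli's criterion never arises, and in any event only this trivial direction of the criterion is used.

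I expect the sole genuine obstacle to be the first step, verifying that $g$ is the minimal polynomial of $v$: since $v$ is a complicated rational expression in $\rho$ and the coefficients $a_k$ are quadratic in $n$, forming $\prod_{i}\bigl(X-v^{\sigma^i}\bigr)$ and matching it against $g/a_6$ is a substantial symbolic computation, most comfortably performed with a computer algebra system. The subsequent field theory is entirely routine.
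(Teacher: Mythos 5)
Your argument is correct and follows essentially the same route as the paper's: both identify $p_{n,r}(X)$ as a scalar multiple of $m(X^r)$, where $m$ is the minimal polynomial of $w\rho_4/\rho_1$ (deferring the same symbolic verification to a computation), and then run the degree count $\lbrack \Q(\zeta):\Q\rbrack = 6r$ against the reducibility of $X^r - w\rho_4/\rho_1$ over $K_n$. The only cosmetic difference is that you extract $\Q(w\rho_4/\rho_1)=K_n$ directly from the irreducibility hypothesis, whereas the paper separately verifies that $w\rho_4/\rho_1$ is a primitive element by showing it cannot lie in a proper subfield.
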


\begin{proof}
If there is a $z \in K_n^\times$ such that 
$z^p = w\rho_4/\rho_1$ with $p$ a prime dividing $r,$ then $X^r - w\rho_4/\rho_1$ factors non-trivially in 
$K_n\lbrack X \rbrack.$  Let $m(X) \in \Q\lbrack X \rbrack$ 
denote the minimal polynomial of $w\rho_4/\rho_1$ over $\Q.$  We note that 
$\deg m(X)=6,$ since $w\rho_4/\rho_1$ is a primitive element for $K_n/\Q.$  
(Note that $w\rho_4/\rho_1$ has trivial norm down to each proper subfield of $K_n.$  If $w\rho_4/\rho_1$ was contained 
in such a proper subfield we would be forced to conclude that $w = \rho_1/\rho_4.$  Using Equations (\ref{deg6_eqn4}),
we rewrite this as $14\rho^6+42\rho^5+28\rho^4-14\rho^3-42\rho^2-28\rho=0.$  Since $\rho \neq 0,$ we deduce that 
$\rho$ is a zero of a degree $5$ polynomial with integer coefficients.  Since $\rho$ is a primitive element for 
$K_n/\Q$ we've reached a contradiction and so conclude that $w\rho_4/\rho_1$ is not contained in any proper subfield 
of $K_n$ and so is a primitive element for the extension.)  Let $\zeta$ be a zero of $X^r - w\rho_1/\rho_4.$
Since $X^r - w\rho_1/\rho_4$ is 
reducible in $K_n\lbrack X \rbrack$ and since $\zeta^r = w\rho_1/\rho_4,$ we deduce that 
$\lbrack \Q(\zeta):\Q\rbrack = \lbrack \Q(\zeta):K_n\rbrack\lbrack K_n:\Q\rbrack < 6r.$  
But since $m(\zeta^r)=0,$ we deduce that $m(X^r)$ factors non-trivially in $\Q\lbrack X \rbrack$ 
(because otherwise $\lbrack \Q(\zeta):\Q \rbrack = \deg m(X^r) = 6r.$)  
Letting $d$ denote the least common multiple of the denominators of the coefficients of $m(X^r),$ 
it is clear that $d(m(X^r))$ factors non-trivially in $\Q\lbrack X \rbrack,$ since $m(X^r)$ factors non-trivially 
in $\Q\lbrack X \rbrack$ if and only if $d(m(X^r))$ does as well.  A straightforward calculation reveals that 
$d(m(X^r))$ is the polynomial, $p_{n,r}(X)$ given in the statement of the proposition.  Thus we have 
shown that the existence of a prime $p \mid r$ such that $w\rho_4/\rho_1 \in (K_n^\times)^p$ implies that 
$p_{n,r}(X)$ admits a non-trivial factorization in $\Q\lbrack X \rbrack.$
\end{proof}

\noindent Hilbert's Irreducibility Theorem \cite{hilbert} states that if $g(X,Y)$ is an irreducible polynomial in 
$\Z\lbrack X,Y \rbrack,$ then there are infinitely many $m \in \Z$ such that 
$g(X,m)$ is irreducible in $\Z\lbrack X \rbrack.$  In particular, if 
$g(X,Y)$ is an irreducible polynomial in $\Z\lbrack X,Y \rbrack$ and if for each $m \in \Z$ there 
is an $n_m \in \Z$ such that 
$$g(X,m)=p_{n_m,r}(X),$$ then Hilbert's Irreducibility Theorem 
implies that there are infinitely many $m \in \Z$ such that $p_{n_m,r}(X)$ is irreducible in $\Q\lbrack X \rbrack.$
We turn now to the construction of such a polynomial $g(X,Y) \in \Z\lbrack X,Y \rbrack.$ 

\begin{proposition}\label{deg6_lem3}
Let $r>0$ be an integer with $(r,6)=1$ and let $y_0,c_r \in \Z,$ where the latter is chosen such that 
$(c_r(-143))^r \equiv -143 \pmod{30}.$  Let 
$$
g(X,Y):= \sum_{k=0}^6 g_k(Y)X^{rk},
$$
where 
\begin{align*}
&g_0(Y)=g_6(Y)=(c_r(30(y_0+8q^2Y)-143))^{2r}\\
&g_1(Y)=g_5(Y)=-6g_0(Y)\\
&g_2(Y)=g_4(Y)=-\frac{104149}{900}g_0(Y)-\frac{16823807}{450}\sqrt{g_0(Y)}-\frac{13841287201}{900}\\
&g_3(Y)=-\frac{253298}{900}g_0(Y)-\frac{16823807}{225}\sqrt{g_0(Y)}-\frac{13841287201}{450}.\\
\end{align*}
Then $g(X,Y)$ is irreducible in $\Q\lbrack X,Y \rbrack$ and for each $m \in \Z,$ $g(X,m) = p_{n_m,r}(X),$ 
where $n_m$ is the unique integer satisfying 
$$(c_r(30(y_0+8q^2m)-143))^r=30n_m-143.$$
\end{proposition}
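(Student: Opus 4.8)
The plan is to prove the two assertions in turn: the specialization identity $g(X,m)=p_{n_m,r}(X)$ is essentially coefficient bookkeeping, whereas the irreducibility of $g$ carries the real content.

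For the specialization, I would first verify that $n_m$ is a well-defined integer. Since $30(y_0+8q^2m)-143\equiv-143\pmod{30}$, reducing $c_r(30(y_0+8q^2m)-143)$ modulo $30$ and raising to the $r$th power gives, via the hypothesis $(c_r(-143))^r\equiv-143\pmod{30}$, that $(c_r(30(y_0+8q^2m)-143))^r\equiv-143\pmod{30}$; hence $n_m:=\big((c_r(30(y_0+8q^2m)-143))^r+143\big)/30\in\Z$. Writing $S_m:=30n_m-143=(c_r(30(y_0+8q^2m)-143))^r$, we have $g_0(m)=S_m^2$ and $\sqrt{g_0(m)}=S_m$, and feeding these into the formulas for $g_1,\dots,g_6$ reproduces exactly the coefficients $a_0,\dots,a_6$ of $p_{n_m,r}$ recorded in Proposition~\ref{deg6_prop4}; the decompositions of $a_2$ and $a_3$ into multiples of $(30n-143)^2$, $(30n-143)$, and $1$ are written precisely so that this match occurs.

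For the irreducibility I would begin by simplifying. The substitution $V:=c_r(30(y_0+8q^2Y)-143)$ is an invertible affine change of $Y$ over $\Q$ (its $Y$-coefficient $240c_rq^2$ is nonzero), under which $g_0=V^{2r}$ and $\sqrt{g_0}=V^r$. Thus $g$ is irreducible in $\Q[X,Y]$ if and only if $h(X,V):=A(X^r)V^{2r}+B(X^r)V^r+C(X^r)$ is irreducible in $\Q[X,V]$, where $A,B,C$ are the degree-six polynomials whose coefficients are read off from the $a_k$. The structural observation I would record is that $B$ and $C$ share the factor $W^2(W+1)^2$, namely $B(W)=\beta W^2(W+1)^2$ and $C(W)=\gamma W^2(W+1)^2$ for explicit nonzero constants $\beta,\gamma$, while $A(0)=1$ and $A(-1)=64$. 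I would then apply Gauss's lemma over the UFD $\Q[X]$, reducing to (i) the content of $h$ in $V$ is a unit and (ii) $h$ is irreducible over $F:=\Q(X)$. Claim (i) is immediate: a common factor of $A(X^r),B(X^r),C(X^r)$ must divide the factor $X^{2r}(X^r+1)^2$ common to $B(X^r)$ and $C(X^r)$, but $A(0)=1$ and $A(-1)=64$ show that $A(X^r)$ is divisible by neither $X$ nor $X^r+1$.

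For (ii) I would view $h$ as the quadratic $A(X^r)T^2+B(X^r)T+C(X^r)$ in $T:=V^r$. Its discriminant factors as $\Delta=X^{2r}(X^r+1)^2\,\Psi(X^r)$, where $\Psi(W):=\beta^2W^2(W+1)^2-4\gamma A(W)$ has degree six; since $\Psi(0)=-4\gamma\neq0$ and a one-time check gives $\disc\Psi\neq0$, the polynomial $\Psi(X^r)$ is squarefree, hence not a square, so $\Delta$ is not a square in $F$ and $E:=F(\sqrt{\Delta})=F(\sqrt{\Psi(X^r)})$ is a quadratic extension. The step I expect to be the main obstacle is the $r$th-root layer. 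A root $v$ of $h$ satisfies $v^r=T_+:=(-B(X^r)+\sqrt{\Delta})/(2A(X^r))\in E$, and since $\sqrt{\Delta}=2A(X^r)v^r+B(X^r)$ lies in $F(v)$ we have $E\subseteq F(v)$, so $h$ is irreducible over $F$ exactly when $V^r-T_+$ is irreducible over $E$. Because $r$ is odd (as $(r,6)=1$), the classical criterion for binomials reduces this to showing $T_+\notin E^p$ for every prime $p\mid r$. I would settle this by one local computation: at a place $w$ of $E$ over an irreducible factor of $X^r+1$ one has $v_w(A(X^r))=0$ (since $A(-1)=64$) and $v_w(B(X^r))=2$, while $\Psi(X^r)$ is a unit there (value $-256\gamma\neq0$), so the place is unramified and $\sqrt{\Psi(X^r)}$ is a unit, giving $v_w(\sqrt{\Delta})=v_w(X^r+1)=1$ and hence $v_w(T_+)=1$. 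Since $1$ is coprime to every prime, $T_+\notin E^p$ for all $p$, so $V^r-T_+$ is irreducible over $E$ and $[F(v):F]=r[E:F]=2r=\deg_V h$. Thus $h$ is irreducible over $F$, and with (i) irreducible in $\Q[X,V]$; undoing the change of variable shows $g$ is irreducible in $\Q[X,Y]$. The only genuinely computational input is the separability of $\Psi$; everything else is local valuation bookkeeping.
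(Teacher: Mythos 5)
Your route is genuinely different from the paper's, which specializes $Y$ so that $V:=c_r(30(y_0+8q^2Y)-143)=1$, proves the resulting $h(X^r)$ irreducible by an Eisenstein shift at $13$ together with a $5$-adic valuation argument in $\Q(v)$, and then disposes of factors of $X$-degree zero by showing such a factor would divide the constant $-7^{12}$. Most of your generic argument is correct and verifiable: $B(W)=\beta W^2(W+1)^2$, $C(W)=\gamma W^2(W+1)^2$, $A(0)=1$, $A(-1)=64$, the content computation, and the valuation bookkeeping at a place over $X^r+1$ all check out. However, the one step you flag as ``the only genuinely computational input'' fails: $\disc\Psi$ is in fact zero. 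One computes
$\Psi(W)=\beta^2W^2(W+1)^2-4\gamma A(W)=\frac{7^{12}}{225}\bigl((W-1)^6-108(W^2+W)^2\bigr)=\frac{7^{12}}{225}(W^2+4W+1)^2(W^2-14W+1)$;
this is the degeneration of the paper's $\varphi_n$ at the excluded parameter $n=0$, and the squared factor (with double roots $-2\pm\sqrt3$) destroys separability. Consequently your assertion that $\Psi(X^r)$ is squarefree is false, and your stated justification that $\Delta$ is not a square in $\Q(X)$ --- the step that makes $E/F$ quadratic --- collapses as written.

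The gap is repairable. Since the constant $\frac{7^{12}}{225}$ and the factor $(X^{2r}+4X^r+1)^2$ are squares, $\Psi(X^r)$ is a square in $\Q(X)$ if and only if $X^{2r}-14X^r+1=(X^r-7)^2-48$ is a square in $\Q[X]$; if it equaled $q(X)^2$ then $\bigl((X^r-7)-q\bigr)\bigl((X^r-7)+q\bigr)=48$ would force both factors, hence their sum $2(X^r-7)$, to be constant, which is absurd. So $E=F\bigl(\sqrt{X^{2r}-14X^r+1}\bigr)$ is indeed quadratic over $F$, the place over $X^r+1$ is still unramified in $E$ (the value of $X^{2r}-14X^r+1$ at $X^r=-1$ is $16\neq0$), and the rest of your computation --- $v_w(\sqrt{\Delta})=1$, hence $v_w(T_+)=1$, hence $T_+\notin E^p$ for all $p\mid r$ --- goes through unchanged. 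With that substitution your proof is sound and arguably cleaner than the paper's two-case argument; but as submitted, the claim that $E/F$ has degree $2$ rests on a computation that does not pass.
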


\noindent To establish the proposition, we first need the following 
\begin{lemma}\label{deg6_lem2}
Let $t \in \Q$ be such that $c_r(30(y_0+8q^2t)-143)=1.$  Then $g(X,t)$ is irreducible in $\Q\lbrack X \rbrack.$
\end{lemma}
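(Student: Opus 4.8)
The plan is to evaluate $g(X,Y)$ at the prescribed $t$, where by hypothesis $c_r(30(y_0+8q^2t)-143)=1$. At this value $g_0(t)=g_6(t)=1$ and $\sqrt{g_0(t)}=1$, so every square root occurring in $g_2,g_3$ rationalizes, and comparing with Proposition~\ref{deg6_prop4} in the specialization $30n-143=1$ (the formal value $n=24/5$) gives $d=(30n-143)^2=1$ and hence $g(X,t)=h(X^r)$, where
$$
h(T):=T^6-6T^5+a_2T^4+a_3T^3+a_2T^2-6T+1
$$
is the palindromic sextic obtained by setting $30n-143=1$ in the coefficients of Proposition~\ref{deg6_prop4}. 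A direct substitution gives $a_2=-385417749/25$ and $a_3=-770836748/25$, so $h\in\Q[X]$ with denominators a power of $5$. Since $h(X^r)$ has $X$-degree $6r$, proving the lemma reduces to showing $h(X^r)$ is irreducible over $\Q$, for which it suffices to establish (i) that $h$ is irreducible over $\Q$, and (ii) that a root $\theta$ of $h$ is not a $p$th power in $L:=\Q(\theta)$ for any prime $p\mid r$. Because $r$ is odd, the Vahlen--Capelli criterion then applies with its $4\mid r$ clause vacuous, so (i) and (ii) give that $X^r-\theta$ is irreducible over $L$, hence $h(X^r)$ is irreducible over $\Q$.

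For (ii) I would read the valuation of $\theta$ off the $5$-adic Newton polygon of $h$. The coefficient valuations $v_5(a_k)$ for $k=0,\dots,6$ are $0,0,-2,-2,-2,0,0$, so the lower hull has vertices $(0,0),(2,-2),(4,-2),(6,0)$ and three segments of slopes $-1,0,+1$, each of horizontal length $2$. Thus $h$ has exactly two roots of $5$-adic valuation $1$, two of valuation $0$, and two of valuation $-1$; since the slope $-1$ is an integer, the associated degree-two factor over $\Q_5$ is unramified, so once $h$ is irreducible there is a prime $\mathfrak P\mid 5$ of $L$ with $e_{\mathfrak P}=1$ and $v_{\mathfrak P}(\theta)=1$. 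If $\theta=\eta^p$ for some $\eta\in L$ and prime $p$, then $1=v_{\mathfrak P}(\theta)=p\,v_{\mathfrak P}(\eta)$, which is impossible; hence $\theta\notin L^p$ for \emph{every} prime $p$, settling (ii) uniformly in $p$. The attractive feature is that the single power of $5$ forced into the denominators by the value $n=24/5$ manufactures a valuation-one prime, so no power-residue analysis of $\theta$ is needed.

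The main obstacle is (i), the irreducibility over $\Q$ of the explicit sextic $h$. The Newton polygon alone does not suffice, since the three slope-two segments are compatible with a factorization of $h$ into two cubics over $\Q$. I would settle (i) through the resolvent cubic arising from the palindromic substitution $U=T+T^{-1}$, namely
$$
c(U)=U^3-6U^2+(a_2-3)U+(a_3+12),
$$
by showing $c$ is irreducible over $\Q$ (a rational-root check after clearing the denominator $25$) and then excluding the two-cubic case: with $c$ irreducible, $\Q(U_1)$ has degree $3$ and $\theta$ satisfies $T^2-U_1T+1=0$ over it, so $3\mid[\Q(\theta):\Q]$ and it remains only to verify that $U_1^2-4$ is a nonsquare in $\Q[U]/(c)$, which forces $[\Q(\theta):\Q]=6$. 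Alternatively, and more cheaply, one may exhibit a single rational prime $\ell\nmid 5\,\disc(h)$ modulo which $h$ reduces to an irreducible sextic; such an $\ell$ can be found by a short search, heuristically because the governing automorphism $\sigma:\rho\mapsto(\rho-1)/(\rho+2)$ is independent of $n$, making $L$ a cyclic sextic field whose inert primes have positive density.

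With (i) and (ii) in hand, $L=\Q(\theta)$ has degree $6$ and $\theta\notin L^p$ for every prime $p\mid r$, so $X^r-\theta$ is irreducible over $L$; therefore $h(X^r)=g(X,t)$ is irreducible over $\Q$ of degree $6r$, which is the assertion of the lemma.
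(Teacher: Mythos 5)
Your reduction is exactly the paper's: specialize to get $g(X,t)=h(X^r)$ with the same sextic $h$, prove $h$ irreducible, and then apply the Vahlen--Capelli criterion (the paper cites Lang, Ch.~VI, \S 9, Thm.~9.1) by showing a root of $h$ is not a $p$th power in $\Q(\theta)$ for $p\mid r$; and like the paper you exploit the single power of $5$ in the denominators for the last step. But two points need repair. First, your step (i) --- the irreducibility of $h$ over $\Q$ --- is the crux of the lemma and you do not actually carry it out: you propose a resolvent-cubic computation (rational-root check on $c(U)$ plus verifying $U_1^2-4$ is a nonsquare in $\Q[U]/(c)$) or a search for a prime $\ell$ with $h$ irreducible mod $\ell$. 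Either would suffice if executed, but neither is, and your heuristic for why the mod-$\ell$ search should succeed presupposes the cyclic sextic structure you are trying to establish. The paper disposes of this in one line: $25\,h(X+1)$ is Eisenstein at $13$.

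Second, in step (ii) your assertion that an integer slope forces the corresponding local factor to be unramified is false in general (e.g.\ $T^2-10T-100$ over $\Q_5$ has Newton polygon of slope $-1$ but generates $\Q_5(\sqrt5)$), so you cannot conclude $v_{\mathfrak P}(\theta)=1$ from the polygon alone. The argument survives anyway: whatever the splitting of a length-$2$ integer-slope segment, the resulting $\mathfrak P\mid 5$ has $e_{\mathfrak P}\le 2$, so $v_{\mathfrak P}(\theta)\in\{1,2\}$ (or $\{-1,-2\}$ on the slope-$(+1)$ side), and since every $p\mid r$ satisfies $p\ge 5$ the relation $v_{\mathfrak P}(\theta)=p\,v_{\mathfrak P}(\eta)$ is still impossible; but you should argue this way rather than claim $\theta\notin L^p$ for \emph{every} prime $p$. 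Note that the paper gets the clean statement by a different route: it observes that $5\theta$ is an algebraic integer while $\theta$ is not, and checks with PARI that $5$ splits completely in $\Q(\theta)$, which forces a first-power prime in the denominator of $\theta$. Your Newton-polygon variant is a nice, computation-free substitute for that PARI check once the overstatement about ramification is removed.
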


\begin{proof}
Note that $g(X,t)=h(X^r),$ where 
$$
h(X) = X^6-6X^5-\frac{385417749}{25}X^4-\frac{770836748}{25}X^3-\frac{385417749}{25}X^2-6X+1.
$$
Since $25 \times h(X+1)$ is Eisenstein at $13,$ $h(X)$ is irreducible in $\Q\lbrack X \rbrack.$
Let $v$ be a zero of $h(X).$  If $g(X,t) = h(X^r)$ has a non-trivial factorization in $\Q\lbrack X \rbrack,$ 
then $X^r-v$ has a non-trivial factorization in $\Q(v)\lbrack X \rbrack.$  So it suffices to show that 
$X^r-v$ is irreducible in $\Q(v)\lbrack X \rbrack.$  By \cite[Chapter VI, \S 9, Theorem 9.1]{lang}, it suffices to show that $v \not\in (\Q(v))^p$ for each prime $p \mid r.$  Towards this end, 
we note that $5^6\;h\left(\frac{X}{5}\right)$ is a monic polynomial in $\Z\lbrack X \rbrack$ that has $5v$ as a zero.  
Hence $5v$ is an algebraic integer in $\Q(v).$  Since $v$ is not an algebraic integer in $\Q(v)$ it has some 
prime ideals in its denominator.  But since $5v$ is an algebraic integer in $\Q(v)$ and since $5$ splits completely 
in $\Q(v)$ (as is easily checked using PARI \cite{pari}, for example), $v$ must have the first power of some ideal in its 
denominator.  Hence $v \not\in (\Q(v))^p$ for every prime $p \mid r.$
\end{proof}

\begin{proof}[Proof of Proposition \ref{deg6_lem3}]
Suppose that $g(X,Y) = k(X,Y)j(X,Y)$ is a non-trivial factorization of $g(X,Y)$ in $\Q\lbrack X, Y\rbrack.$  If $k$ and $j$ each have degree $\ge 1$ in $X,$ then 
choosing $t \in \Q$ such that $c_r(30(y_0+8q^2t)-143)=1$ implies that 
$g(X,t)=k(X,t)j(X,t)$ is a non-trivial factorization of $g(X,t)$ in $\Q\lbrack X \rbrack$; 
a contradiction to Lemma \ref{deg6_lem2}.  We conclude that if $g(X,Y)$ factors non-trivially in 
$\Q\lbrack X,Y \rbrack,$ then one of the factors must have degree $0$ in $X;$ i.e., 
$g(X,Y)=k(Y)j(X,Y).$  Without loss of generality we may assume that 
$k(Y)$ is irreducible in $\Q\lbrack X,Y\rbrack.$  Hence $k(Y)$ divides each $g_i(Y)$ as well as 
$\sqrt{g_0(Y)}.$  We conclude that $k(Y)$ divides 
$900g_2(Y)+104149g_0(Y)+2(16823807)\sqrt{g_0(Y)} = -7^{12}.$  But $k(Y)$ is a non-trivial factor of 
$g(X,Y)$ in $\Q\lbrack X,Y \rbrack$ and so must have degree greater than $0$ in $Y.$  
This contradiction reveals that $g(X,Y)$ is irreducible in $\Q\lbrack X,Y \rbrack.$\\
\\
Note that for each $m \in \Z,$ 
$(c_r(30(y_0+8q^2m)-143))^r \equiv (-143c_r)^r \equiv -143 \pmod{30}.$  
Hence for each $m \in \Z$ there is an $n_m \in \Z$ such that 
$(c_r(30(y_0+8q^2m)-143))^r=30n_m-143$ and hence such that 
$g(X,m) = p_{n_m,r}(X).$  The uniqueness of $n_m$ is clear.
\end{proof}

\noindent We now prove

\begin{proposition}\label{deg6_prop6}
There is a $y \in \Z$ for which Proposition \ref{deg6_prop3} holds and hence a field $K_n \in \frak{F}$ for which 
\begin{enumerate}[label=(\roman*)]
\item there is an ideal $J \in \mathcal{I}_{K_n}$ whose ideal class, $\lbrack J \rbrack,$ has order $r.$
\item $N_{K \vert k_i}(J),$ $(i=2,3)$ is a principal ideal.
\end{enumerate}
\end{proposition}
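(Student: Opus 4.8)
The plan is to locate a single index $m$, and with it the integer $y = c_r(30(y_0 + 8q^2 m) - 143)$ and the field $K = K_{n_m}$, at which all the hypotheses of Proposition \ref{deg6_prop3} hold at once; conclusion (ii) will then come for free. Throughout I fix a prime $q \nmid 210r$ for which $\mathcal{P}(X)$ splits into linear factors modulo $q$ (such primes exist, as recorded in the proof of Theorem \ref{deg6_thm1}), so that the construction of Proposition \ref{deg6_lem3} is available; the auxiliary integer $y_0$ will be pinned down by the congruence bookkeeping below. By Proposition \ref{deg6_lem3} we automatically have $y^r = 30 n_m - 143 = f_{n_m}(-3)$ and $g(X,m) = p_{n_m,r}(X)$, so the only real work is to arrange the remaining requirements of Proposition \ref{deg6_prop3}: $n_m \equiv 2 \pmod 4$ together with $7 \nmid y$; the power-residue conditions (1); and condition (2).

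For condition (2) I appeal to Proposition \ref{deg6_prop4}: it suffices that $p_{n_m,r}(X)$ be irreducible over $\Q$, and since $p_{n_m,r}(X) = g(X,m)$ with $g$ irreducible in $\Q[X,Y]$, Hilbert's Irreducibility Theorem guarantees this for all $m$ outside a thin set. For condition (1) I would, for each prime $p \mid r$ (necessarily $p \ge 5$, as $(6,r)=1$), invoke Chebotarev's theorem in the field $\Q(\zeta_p, 2^{1/p}, 3^{1/p}, 5^{1/p})$ to choose primes $\ell_1, \ell_2 \ne 7$ with $(2\,\vert\,\ell_1)_p = (3\,\vert\,\ell_1)_p = 1$, $(5\,\vert\,\ell_1)_p \ne 1$ and $(3\,\vert\,\ell_2)_p = (5\,\vert\,\ell_2)_p = 1$, $(2\,\vert\,\ell_2)_p \ne 1$; the multiplicative independence of $2,3,5$ in $\Q^\times/(\Q^\times)^p$ makes the requisite Frobenius classes nonempty, so such primes occur with positive density and may be taken coprime to $240 c_r q^2$. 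I then force each chosen $\ell_j \mid y$ — hence $\ell_j \mid 30 n_m - 143 = y^r$ — by a congruence on $m$, which is solvable precisely because the step $240 c_r q^2$ of the progression $m \mapsto y$ is prime to $\ell_j$.

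The congruence bookkeeping splits cleanly according to whether a modulus divides the step $240 c_r q^2$. Because the step is divisible by both $8$ and $q^2$, the residues of $y$ modulo $8$ and modulo $q^2$ are fixed by the choice of $y_0$ and are unaffected by $m$; here I choose $y_0$ so that $y \equiv 5 \pmod 8$, which is equivalent to $n_m \equiv 2 \pmod 4$ (this also leaves room to impose, for the downstream Proposition \ref{deg6_prop1}, the condition $n_m^2 + 108 \equiv 0 \pmod q$ that makes $q$ ramify in $k_2$). The remaining conditions involve moduli prime to the step — the $\ell_j$ and the prime $7$ — so they are governed by the residue of $m$: forcing each $\ell_j \mid y$ and $7 \nmid y$ amounts to congruences on $m$, which combine by the Chinese Remainder Theorem into a single progression $m \equiv a \pmod b$. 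Finally I reparametrize by $Y = a + bY'$; this affine substitution preserves irreducibility, so $g(X, a+bY')$ is still irreducible in $\Q[X,Y']$, and the basic form of Hilbert's Irreducibility Theorem produces infinitely many $m' \in \Z$, hence infinitely many $m$ in the progression, for which $p_{n_m,r}(X) = g(X,m)$ is irreducible.

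Discarding the finitely many $m$ with $n_m \in \{0, \pm 6, \pm 26\}$, any surviving $m$ yields a field $K = K_{n_m} \in \frak{F}$ meeting every hypothesis of Proposition \ref{deg6_prop3}, so $w\o_K = J^r$ with $[J]$ of order $r$; this is (i). Conclusion (ii) is then immediate: a direct computation from (\ref{deg6_eqn4}) gives $\sigma^3(w) = 1/w$ and $w\,\sigma^2(w)\,\sigma^4(w) = 1$, so $N_{K \vert k_i}(w) = 1$ for $i = 2,3$; hence $N_{K \vert k_i}(J)^r = N_{K \vert k_i}(w\o_K) = \o_{k_i}$, and since the ideal group $\mathcal{I}_{k_i}$ is torsion-free this forces $N_{K \vert k_i}(J) = \o_{k_i}$, which is principal. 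The main obstacle is the simultaneity: Hilbert's theorem controls $m$ only up to a thin exceptional set, whereas condition (1), the parity of $n_m$, and $7 \nmid y$ confine $m$ to a fixed progression. The reparametrization $Y = a + bY'$ is exactly what reconciles the two, and I expect the most delicate bookkeeping to be checking that the resulting progression is both nonempty and consistent with the requirement that the selected primes $\ell_j$ be prime to the step $240 c_r q^2$.
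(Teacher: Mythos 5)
Your proposal is correct and follows the same skeleton as the paper's proof: verify the hypotheses of Proposition \ref{deg6_prop3} by combining Proposition \ref{deg6_lem3}, Hilbert's Irreducibility Theorem, Proposition \ref{deg6_prop4}, and a Chebotarev/Bauer-type choice of the auxiliary primes $\ell_1,\ell_2$, then read off (ii) from $N_{K\mid k_i}(w)=1$ and torsion-freeness of $\mathcal{I}_{k_i}$. The one genuine divergence is in how condition (1) of Proposition \ref{deg6_prop3} is arranged. The paper (Lemma \ref{deg6_lem4}) builds the primes $\ell_1(p),\ell_2(p)$ into the constant $c_r$ itself, so that $\ell_j\mid c_r\mid y$ for \emph{every} $m$; no congruence restriction on $m$ is needed and the basic form of Hilbert's theorem applies directly to $g(X,Y)$. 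You instead keep the $\ell_j$ coprime to the step $240c_rq^2$, force $\ell_j\mid y$ (and $7\nmid y$) by confining $m$ to an arithmetic progression, and then reparametrize $Y=a+bY'$ to restore applicability of the basic irreducibility theorem. Both are valid -- the affine substitution is an automorphism of $\Q\lbrack X,Y\rbrack$, so irreducibility is preserved -- but the paper's device of absorbing the $\ell_j$ into $c_r$ eliminates the progression/reparametrization step entirely, while your version has the minor virtue of making the $7\nmid y$ requirement (which the paper handles only implicitly, via the factor $d$ and the choice of $m$) an explicit congruence. Your handling of the parity condition $n_m\equiv 2\pmod 4$ by adjusting $y_0$ rather than introducing the paper's auxiliary factor $d\equiv 5/(c_r(30y_0-143))\pmod 8$ is an equivalent piece of bookkeeping, and your explicit verification that $w\,\sigma^2(w)\,\sigma^4(w)=1$ and $w\,\sigma^3(w)=1$ supplies the norm computation the paper takes for granted.
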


\noindent Our proof makes use of the following Lemma

\begin{lemma}\label{deg6_lem4}
There is a constant $c_r$ with $c_r^r \equiv 7^{1-r} \pmod{30}$ and $(210q,c_r)=1$ 
and which is such that for each prime $p \mid r$ there are primes $\ell_i \mid c_r,$ 
$(i=1,2),$ for which
\begin{enumerate}
\item $(2\,\vert\,\ell_1)_p = (3\,\vert\,\ell_1)_p = 1$ and $(5\,\vert\,\ell_1)_p\neq 1$
\item $(3\,\vert\,\ell_2)_p = (5\,\vert\,\ell_2)_p = 1$ and $(2\,\vert\,\ell_2)_p\neq 1.$
\end{enumerate}
\end{lemma}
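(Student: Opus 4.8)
The plan is to build $c_r$ as a product of primes, each engineered to realize a prescribed $p$th power residue behaviour, and then to correct the class of the product modulo $30$ by appending one further prime supplied by Dirichlet's theorem. I would begin by settling the congruence $c_r^r \equiv 7^{1-r} \pmod{30}$. Since $(r,6)=1$ the integer $r$ is odd, hence coprime to $4$, the exponent of $(\Z/30\Z)^\times \cong \Z/2\Z \times \Z/4\Z$. Consequently $x \mapsto x^r$ is a bijection of $(\Z/30\Z)^\times$, so there is a unique unit $c^* \in (\Z/30\Z)^\times$ with $(c^*)^r \equiv 7^{1-r} \pmod{30}$, and the congruence reduces to arranging $c_r \equiv c^* \pmod{30}$, where $c^*$ is a fixed residue coprime to $2,3,5$.

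Next, for each prime $p \mid r$ (necessarily $p \ge 5$, as $(r,6)=1$) I would produce $\ell_1,\ell_2$ by Kummer theory and Chebotarev. Over the base field $\Q(\zeta_p)$, set $M_p := \Q(\zeta_p, 2^{1/p}, 3^{1/p}, 5^{1/p})$. As $2,3,5$ are distinct rational primes they are independent in $\Q^\times/(\Q^\times)^p$, and because $[\Q(\zeta_p):\Q]=p-1$ is coprime to $p$ this independence survives in $\Q(\zeta_p)^\times/(\Q(\zeta_p)^\times)^p$; hence $\gal(M_p/\Q(\zeta_p)) \cong (\Z/p\Z)^3$, the three coordinates recording the Frobenius action on $2^{1/p},3^{1/p},5^{1/p}$. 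For a rational prime $\ell \equiv 1 \pmod p$ — equivalently one splitting completely in $\Q(\zeta_p)$ — and a prime $\lambda$ of $\Q(\zeta_p)$ above $\ell$, the Frobenius of $\lambda$ in $(\Z/p\Z)^3$ has its coordinates trivial exactly when $2,3,5$ respectively are $p$th power residues modulo $\ell$. By the Chebotarev density theorem applied to $M_p/\Q(\zeta_p)$, every element of $(\Z/p\Z)^3$ arises this way for a positive density of degree-one primes $\lambda$; taking the coordinate vector $(0,0,1)$ yields a prime $\ell_1=\ell_1^{(p)}$ meeting condition (1) and taking $(1,0,0)$ yields $\ell_2=\ell_2^{(p)}$ meeting condition (2). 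Discarding finitely many exceptions, I take all of these primes distinct and coprime to $210q$.

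Finally I would assemble the constant. Put $P := \prod_{p\mid r}\ell_1^{(p)}\ell_2^{(p)}$, a unit modulo $30$, say $P \equiv v \pmod{30}$. By Dirichlet's theorem pick a prime $\ell_0 \equiv c^* v^{-1} \pmod{30}$ that is coprime to $210q$ and distinct from the previously chosen primes, and set $c_r := P\,\ell_0$. Then $c_r \equiv c^* \pmod{30}$, so $c_r^r \equiv 7^{1-r} \pmod{30}$; every prime factor of $c_r$ avoids $\{2,3,5,7,q\}$, so $(210q,c_r)=1$; and for each $p\mid r$ the divisors $\ell_1^{(p)},\ell_2^{(p)}$ of $c_r$ satisfy (1) and (2). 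This is the desired $c_r$.

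The step I expect to be the crux is the degree computation for $M_p/\Q(\zeta_p)$: one must know that $2,3,5$ remain independent modulo $p$th powers after adjoining $\zeta_p$, for otherwise some combination of the three power residue symbols would be unattainable and Chebotarev would return nothing for one of the two prescribed classes. The coprimality $\gcd(p,p-1)=1$ is precisely what forces this independence, and it is the hinge that lets the three symbols be specified independently; I would state it carefully rather than treat it as routine.
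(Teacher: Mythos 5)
Your proof is correct and follows essentially the same route as the paper's: both produce the primes $\ell_i$ by prescribing splitting behaviour in Kummer extensions of $\Q(\zeta_p)$ (you via Chebotarev on the full compositum $\Q(\zeta_p,2^{1/p},3^{1/p},5^{1/p})$ after checking the independence of $2,3,5$ modulo $p$th powers, the paper via Bauer's theorem applied to the pair $L_1=\Q(\zeta_p,2^{1/p},3^{1/p})$ and $L_2=\Q(\zeta_{2p},5^{1/p})$ with $L_2\not\subset L_1$), and both then assemble $c_r$ as a product of these primes times an auxiliary factor that fixes the class modulo $30$. Your observation that $x\mapsto x^r$ is a bijection of $(\Z/30\Z)^\times$ plays the role of the paper's B\'ezout identity $ra+4b=1$ together with $7^4\equiv 1\pmod{30}$, and your Dirichlet prime $\ell_0$ plays the role of the paper's integer $s$; these differences are cosmetic.
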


\begin{proof}
Let $L_1 = \Q(\zeta_p,2^{1/p},3^{1/p})$ and $L_2 = \Q(\zeta_{2p},5^{1/p}),$ where 
$\zeta_n$ denotes a primitive $n$th root of unity.  A prime $\ell$ totally splits in $L_1$ if and only if 
$\ell \equiv 1 \pmod{p}$ and $(2\,\vert\,\ell)_p = (3\,\vert\,\ell)_p = 1.$  Likewise a prime $\ell$ totally 
splits in $L_2$ if and only if $\ell \equiv 1 \pmod{2p}$ and $(5\,\vert\,\ell)_p=1.$  Since $L_2 \not\subset L_1,$ 
we may apply Bauer's Theorem to conclude that there are infinitely many primes $\ell$ that 
totally split in $L_1$ but not in $L_2.$  Hence there are infinitely many primes 
$\ell$ such that $(2\,\vert\,\ell)_p = (3\,\vert\,\ell)_p = 1$ and $(5\,\vert\,\ell)_p\neq 1.$  Choose such a prime 
$\ell_1$ and use the infinitude of such primes to ensure that $(\ell_1,210q)=1.$  Similarly, choose 
$\ell_2$ with $(\ell_2,210q)=1,$ and such that $(3\,\vert\,\ell_2)_p = (5\,\vert\,\ell_2)_p = 1$ and $(2\,\vert\,\ell_2)_p\neq 1.$
Repeat this procedure for each prime $p \mid r.$  Choose $a,b \in \Z$ such that 
$ra+4b=1$ and choose $s \in \N$ with $(s,7q)=1$ and such that 
$s\left(\prod_{p \mid r} \ell_1(p)\ell_2(p)\right) \equiv 7^{(1-r)a} \pmod{30}.$  Setting 
$c_r = s\left(\prod_{p \mid r} \ell_1(p)\ell_2(p)\right),$ we conclude that 
$c_r^r = s^r\left(\prod_{p \mid r} \ell_1(p)\ell_2(p)\right)^r \equiv 7^{(1-r)ar} \equiv \left(7^{1-r}\right)^{ra+4b} = 7^{1-r} \pmod{30}.$
\end{proof}

\begin{proof}[Proof of Proposition \ref{deg6_prop6}]
Choose $c_r$ according to Lemma \ref{deg6_lem4}, let $y_0 \in \Z,$ and 
let $y:= dc_r(30(y_0+8q^2m)-143),$ where $d$ satisfies the system of congruences 
$d \equiv 5/(c_r(30y_0-143)) \pmod{8}$ and $d \equiv 1 \pmod{30}.$
Then $y^r \equiv (-143c_r)^r \equiv (7c_r)^r \equiv 7 \pmod{30}.$  
Hence $y^r = 30n-143$ for some $n \in \Z.$  Furthermore, since $r$ is odd and 
since $y \equiv 5 \pmod{8},$ the congruence $5 \equiv y^r = 30n-143 \pmod{8}$ 
implies that %$30(n-2)-88=30n-148 \equiv 0 \pmod{8}$ and hence that 
$n \equiv 2 \pmod{4}.$
According to Lemma \ref{deg6_lem4}, 
for each prime $p \mid r$ there are primes dividing $c_r,$ and 
hence dividing $30n-143,$ that satisfy condition (1) of Proposition \ref{deg6_prop3}. 
Since $(dc_r(-143))^r \equiv -143 \pmod{30},$ Proposition 
\ref{deg6_lem3} and the discussion of Hilbert's Irreducibility Theorem preceding it reveal that $p_{n_m,r}(X)$ 
is irreducible in $\Q\lbrack X \rbrack$ for infinitely many $m \in \Z.$  Choosing such a pair $(m,n_m),$ 
Proposition \ref{deg6_prop4} reveals that $w\rho_4/\rho_1$ fails to have a $p$th root in $K_{n_m}$ for each 
prime $p \mid r.$  We see that 
for such a pair $(m,n_m)$ all of the hypotheses of Proposition \ref{deg6_prop3} are 
satisfied and we conclude that there is a field $K:=K_{n_m} \in \frak{F}$ for which 
$w\o_K = J^r$ where $\lbrack J \rbrack$ has order $r.$  Finally, since $J^r=w\o_K$ and since $N_{K \mid k_i}(w)=1$ $(i=2,3),$ it is clear that $N_{K \mid k_i}(J) = \o_{k_i}$ $(i=2,3).$
\end{proof}

\noindent At this point we have found a field $K \in \frak{F}$ that satisfies conditions 
$(i)$ and $(ii)$ of Proposition \ref{deg6_prop1}.  All that remains to establish this proposition is 
to show that $q$ ramifies in $k_2.$  This is accomplished through a judicious choice of $y_0$ in 
the following proof.

\begin{proof}[Proof of Proposition \ref{deg6_prop1}]
Since $\mathcal{P}(X)$ splits into linear factors$\pmod{q},$ we deduce that $(-108\,\vert\,q)_2=1.$  
Choose $n_0 \in \Z$ such that $n_0^2+108\equiv 0 \pmod{q}$ and $n_0^2+108 \not\equiv 0 \pmod{q^2}.$  
Use the fact that $\mathcal{P}(X)$ splits into linear factors $\pmod{q}$ to choose $b_0 \in \Z$ such that 
$b_0^r \equiv 30n_0-143 \pmod{q}.$  Note that $q \neq 7$ implies that $q \nmid b_0,$ since otherwise 
$q$ would be a prime common divisor of $30n_0-143$ and $n_0^2+108 \mid \disc(f_{n_0}(X)).$  Since 
$q \nmid b_0$ and since $q \nmid r,$ we conclude that we may lift $b_0$ to an $r$th root of 
$30n_0-143 \pmod{q^2}.$  Choose $c_r$ according to Lemma \ref{deg6_lem4} and note that 
since $q \nmid 30c_r,$ we may choose $y_0 \in \Z$ such that 
$b_0 \equiv c_r(30y_0-143) \pmod{q^2}.$\\
\\
Let $m \in \Z$ and let $y := dc_r(30(y_0+8q^2m)-143),$ where $d$ satisfies the system of congruences 
$d \equiv 5/(c_r(30y_0-143)) \pmod{8}$ and $d \equiv 1 \pmod{30q^2}.$
Then $y^r \equiv b_0^r \equiv 30n_0-143 \pmod{q^2}.$  Since 
$y^r \equiv -143 \pmod{30},$ we have $y^r=30n_m-143,$ for some $n_m \in \Z.$  
Since $n_m \equiv n_0 \pmod{q^2},$ we deduce 
that $n_m^2+108 \equiv 0 \pmod{q}$ and $n_m^2+108 \not\equiv 0 \pmod{q^2}.$  This implies that 
$q$ ramifies in $k_2 = \Q(\sqrt{n_m^2+108}) \subset K_{n_m},$ since $q$ divides the square-free part of 
$\disc(X^2-(n_m^2+108))$ and so divides $\disc(\o_{k_2}/\Z).$\\
\\
We conclude that for each $m \in \Z,$ the prime $q$ ramifies in $K_{n_m}.$  
To prove that one of these fields satisfies conditions 
$(i)$ and $(ii)$ of Proposition \ref{deg6_prop1}, one may 
copy the proof of Proposition \ref{deg6_prop6} verbatim beginning with the fourth sentence.
\end{proof}

% Quartics
\section{A family of quartic number fields.}

\noindent  In this section we apply the method introduced in the previous section to a family 
of quartic number fields.  Since the results and proofs of this section are highly similar to those presented in the previous section, much of this section will be abridged.\\
\\
\noindent Let $n \in \Z\smallsetminus \{0,\pm 3\}$ and let $K_n=\Q(\rho),$ where $\rho$
is a root of 
\begin{equation}
f_n(X) := X^4-nX^3-6X^2+nX+1.
\end{equation}
$K_n/\Q$ is a real cyclic quartic extension and was studied by M.-N. Gras in \cite{gras2}.
Let $G:= \gal(K_n/\Q).$ Then $G=\langle \sigma \mid \sigma^4=1 \rangle,$ 
where $\sigma : \rho \mapsto (\rho-1)/(\rho+1).$  The images of 
$\rho$ under $G,$ denoted
$\rho_i := \sigma^i(\rho),$ $(i=0,\ldots,3),$ are given by
\begin{equation}
\begin{gathered}
\rho_0 = \rho\\
\rho_2 = -1/\rho
\end{gathered} \qquad
\begin{gathered}
\rho_1 = (\rho-1)/(\rho+1)\\
\rho_3 = -(\rho+1)/(\rho-1).
\end{gathered}
\end{equation}  

\noindent Let $\frak{F}:=\{K_n \mid n \neq 0,\pm 3\}.$  
Let $\mathcal{I}_K$ denote the group of fractional ideals of $K\in \frak{F}$  
and for a fractional ideal $J \in \mathcal{I}_K,$ let $\lbrack J \rbrack$ denote its ideal class.  
Let $k_2=\Q(\sqrt{n^2+16})$ denote the unique quadratic subfield of $K.$
Then $\lbrack J \rbrack$ is said to be a \emph{relative ideal class} if it has trivial norm down to $k_2.$\\
\\
\noindent Note that the $\rho_i$ $(i=0,\ldots,3)$ are algebraic integers and, in fact, relative units of $K_n.$\\
\\
Our primary result is
\begin{theorem}\label{thm2}
Let $r>0$ be an odd integer.  There are infinitely many fields $K \in \frak{F}$ containing a relative ideal class of order $r.$
\end{theorem}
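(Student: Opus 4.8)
The plan is to mirror the strategy used for the sextic family in Section 2, since the author explicitly says this section will be highly abridged and the quartic case is structurally parallel. First I would isolate an analogue of Proposition \ref{deg6_prop1}: a statement asserting the existence of a single field $K \in \frak{F}$ containing a relative ideal class of order $r$, together with a ramification condition on a prime $q$ in the quadratic subfield $k_2 = \Q(\sqrt{n^2+16})$. The deduction of Theorem \ref{thm2} from such a proposition should then be verbatim the argument in the \emph{Proof of Theorem \ref{deg6_thm1}}: there are infinitely many primes $q$ (meeting an appropriate splitting condition modulo some auxiliary polynomial $\mathcal{P}(X)$ built from $\sqrt{-16}$ or a related quantity) that ramify in the quadratic subfield of some $K$ depending on $q$; since only finitely many primes ramify in the quadratic subfields of any finite subfamily, the set of $K$ carrying a relative ideal class of order $r$ must be infinite.

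The core construction then proceeds through the quartic analogues of Propositions \ref{deg6_prop2} through \ref{deg6_prop6}. The plan is to choose a suitable linear specialization of $\rho$ (the author used $3+\rho_i$ and the auxiliary element $w$ in the sextic case), form a relative-unit-like quantity $w$ as a product and quotient of the factors $(a+\rho_i)$ having trivial norm down to $k_2$, and show that if $y^r = f_n(a)$ for an integer $y$ avoiding the primes dividing $\disc(f_n)$, then $w\o_{K_n} = J^r$ for some ideal $J$. The argument that each $(a+\rho_i)\o_{K_n}$ is an $r$th power is identical in spirit: split $f_n(a) = \prod_i (a - \rho_i)$ over the Galois conjugates, use the bad-prime lemma (the quartic version of Lemma \ref{deg6_lem1}, identifying the only possible common divisor of $f_n(a)$ and $\disc(f_n)$) to force each rational prime dividing $y$ to split completely, and match exact prime-ideal powers.

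To show $[J]$ has full order $r$ rather than a proper divisor, I would establish the quartic analogue of the unit-index lemmas: a fundamental system for $\o_{K_n}^\times$ has rank $3$, so I would take $U = \langle -1, \varepsilon, \rho_0/\rho_2, \rho_1/\rho_3 \rangle$ (with $\varepsilon$ the fundamental unit of $k_2$), prove $[\o_{K_n}^\times : U] < \infty$ via a regulator nonvanishing computation, and then use $p$th-power residue conditions at two auxiliary primes $\ell_1, \ell_2 \mid f_n(a)$ to force the unit exponents to vanish modulo each $p \mid r$. The norm-down-to-$k_2$ relations kill the $\varepsilon$-exponent exactly as before. What remains is the irreducibility obstacle: eliminating the possibility that $w$ (times a corrective relative unit) has a $p$th root in $K_n$ reduces to showing a polynomial $p_{n,r}(X)$, of the shape $\sum_k a_k X^{rk}$ with coefficients depending on $n$ and $r$, is irreducible over $\Q$. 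I expect this to be the main obstacle, exactly as in Section 2: one must construct a two-variable polynomial $g(X,Y) \in \Z[X,Y]$ that is irreducible and specializes to $p_{n_m,r}(X)$ along an arithmetic progression of parameters, then invoke Hilbert's Irreducibility Theorem to produce infinitely many $n$ for which $p_{n,r}(X)$ is irreducible. Verifying the irreducibility of $g(X,Y)$ — likely by specializing to a value forcing an Eisenstein-type criterion and a Capelli-type argument (as in Lemma \ref{deg6_lem2} and Proposition \ref{deg6_lem3}) — is the technically delicate step, and the arithmetic bookkeeping connecting the residue conditions to a constant $c_r$ (the analogue of Lemma \ref{deg6_lem4}) via Bauer's Theorem must be redone for the quartic residue symbols.
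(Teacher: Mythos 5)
Your proposal follows essentially the same route as the paper: the paper proves the quartic analogue of Proposition \ref{deg6_prop1} (namely Proposition \ref{thm1}, with $\mathcal{P}(X)=(X^r+7-6\sqrt{-16})(X^r+7+6\sqrt{-16})$, the element $w=(2+\rho_1)/(2+\rho_3)$, the bad prime $5$, and $U=\langle -1,\varepsilon,\rho_0,\rho_1\rangle$ since the $\rho_i$ are already relative units), and then deduces the theorem by the identical ramification-counting argument from the sextic case. The only cosmetic deviations are your choice of generators $\rho_0/\rho_2=-\rho_0^2$, $\rho_1/\rho_3$ for $U$ (harmless for odd $r$) and the specialization step of the Hilbert-irreducibility lemma, where the paper replaces the Eisenstein trick by an explicit computation of units in $K_{103}$.
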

\noindent It is straightforward to see that $K_n = K_{-n}.$  However, the parametrization 
of $\frak{F}$ given by $n \mapsto K_n$ fails to be injective even when we restrict to $n>0.$ For example Lazarus \cite{lazarus} notes that $K_2 = K_{22}.$  Hence in proving Theorem 
\ref{thm2} it is insufficient to simply show that there are infinitely many $n$ such that $K_n$ contains a relative 
ideal class of order $r.$  Instead we put most of our effort into first proving 
\begin{proposition}
\noindent Let $r>0$ be an odd integer.  Let $q \in \Z$ be a prime such that 
$$
\mathcal{P}(X):=(X^r+7-6\sqrt{-16})(X^r+7+6\sqrt{-16}) \in \Z\lbrack X \rbrack
$$
splits into $\textmd{linear factors}\pmod{q}.$  
Assume that 
$q \nmid 5r.$  There is a field $K \in \frak{F}$ that satisfies the following three conditions:\label{thm1}
\begin{enumerate}[label=(\roman*)]
\item There is an ideal $J \in \mathcal{I}_K$ whose ideal class, $\lbrack J \rbrack,$ has order $r.$\label{1a}
\item $N_{K \mid k_2}(J)$ is a principal ideal.\label{1b}
\item $q$ ramifies in $k_2.$\label{1c}
\end{enumerate}
\end{proposition}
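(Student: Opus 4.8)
The plan is to follow Section~2 almost verbatim under the dictionary below, establishing the quartic analogues of Propositions~\ref{deg6_prop2}--\ref{deg6_prop6} and of Lemmas~\ref{deg6_lem1}, \ref{deg6_lem2} and \ref{deg6_lem4}, and then transcribing the proof of Proposition~\ref{deg6_prop1}. The value $f_n(-3)=30n-143$ is replaced by $f_n(-2)=16+8n-24-2n+1=6n-7$, and the exceptional prime $7$ (for which $143^2+900\cdot108=7^6$) is replaced by $5$, since $7^2+36\cdot16=625=5^4$; this is the source of the hypothesis $q\nmid 5r$. A prime dividing both $6n-7$ and $\disc(f_n)$ must be $5$ (the analogue of Lemma~\ref{deg6_lem1}): for $p\neq2,3$, $6n\equiv7$ forces $n^2+16\equiv 625/36\pmod p$, so $p\mid n^2+16$ only if $p=5$, while $2,3\nmid 6n-7$. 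Since $\mathcal P(X)=(X^r+7)^2+576$, the substitution $X^r=6n-7$ with $n^2+16\equiv0\pmod q$ gives $\mathcal P(X)\equiv 36(n^2+16)\equiv0\pmod q$, so splitting of $\mathcal P$ modulo $q$ again yields a root of $X^r=6n-7$ in $\Z/q\Z$. Here $K/\Q$ has the single proper subfield $k_2$, so ``relative'' means trivial norm down to $k_2$ alone, and the relevant group is $U=\langle -1,\varepsilon,\rho_0,\rho_1\rangle$, where $\varepsilon$ is the fundamental unit of $k_2$ and $\rho_0,\rho_1$ are relative units (each has $N_{K\mid k_2}=-1$); since $\rk\o_K^\times=3$, multiplicative independence of $\{\varepsilon,\rho_0,\rho_1\}$ gives $[\o_K^\times:U]<\infty$.

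First I would prove the analogue of Proposition~\ref{deg6_prop6}: the existence of a $K_n\in\frak F$ satisfying \ref{1a} and \ref{1b}. One takes $w=(2+\rho_1)/(2+\rho_3)$, so that $\sigma^2(w)=w^{-1}$ and hence $N_{K\mid k_2}(w)=1$, and shows (analogue of Proposition~\ref{deg6_prop2}) that if $y^r=6n-7$ with $5\nmid y$, then $w\o_{K_n}=J^r$, by the same prime-by-prime argument resting on the analogue of Lemma~\ref{deg6_lem1}. Next one proves the power-residue lemma bounding $[\o_K^\times:U]$ and the ``order exactly $r$'' statement (analogue of Proposition~\ref{deg6_prop3}): taking the norm to $k_2$ removes the $\varepsilon$-exponent, and reducing modulo primes $\mathcal L_i$ above $\ell_i\mid 6n-7$ with $\rho_0\equiv-2$ (whence $\rho_1\equiv3$ and $w\equiv3$) leaves a single obstruction, namely that $w/\rho_1$ have a $p$th root in $K_n$. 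The residues here involve only $2$ and $3$, so the analogue of Lemma~\ref{deg6_lem4} produces $c_r$ controlling the $p$th-power-residue behaviour of $2$ and $3$ via Bauer's Theorem applied to $\Q(\zeta_p,2^{1/p})$ and $\Q(\zeta_p,3^{1/p})$.

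The residual obstruction is removed exactly as in Propositions~\ref{deg6_prop4} and \ref{deg6_lem3}: if $w/\rho_1\in(K_n^\times)^p$ for some $p\mid r$, then $m(X^r)$ factors nontrivially in $\Q[X]$, where $m$ is the degree-$4$ minimal polynomial of the primitive element $w/\rho_1$; clearing denominators gives a polynomial $p_{n,r}(X)$, and Hilbert's Irreducibility Theorem applied to a two-variable $g(X,Y)\in\Z[X,Y]$ specializing to the $p_{n,r}(X)$ yields infinitely many $n$ for which $p_{n,r}$ is irreducible, hence for which no such $p$th root exists. With this the analogue of Proposition~\ref{deg6_prop6} holds, and the proof of the stated proposition then transcribes that of Proposition~\ref{deg6_prop1}, whose only new content is \ref{1c}. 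Splitting of $\mathcal P$ modulo $q$ gives $(-16\mid q)_2=1$, and the same splitting forces $q$ to be coprime to $6$ once $r>1$ (the case $r=1$ being trivial). Choose $n_0$ with $n_0^2+16\equiv0\pmod q$ but $\not\equiv0\pmod{q^2}$ (possible since $q$ is odd), and, using the splitting, $b_0$ with $b_0^r\equiv 6n_0-7\pmod q$. The hypothesis $q\neq5$ forces $q\nmid b_0$ (otherwise $q$ would divide both $6n_0-7$ and $n_0^2+16\mid\disc(f_{n_0})$, contradicting the quartic Lemma~\ref{deg6_lem1}), and $q\nmid r$ lets one Hensel-lift $b_0$ to an $r$th root of $6n_0-7$ modulo $q^2$. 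Choosing $c_r$, then $y_0$ with $c_r(6y_0-7)\equiv b_0\pmod{q^2}$, and finally $y:=dc_r(6(y_0+q^2m)-7)$ with $d\equiv1\pmod{6q^2}$, one gets $y\equiv b_0\pmod{q^2}$ and $y^r\equiv-7\pmod6$, so $y^r=6n_m-7$ with $n_m\equiv n_0\pmod{q^2}$. Then $n_m^2+16\equiv0\pmod q$ but $\not\equiv0\pmod{q^2}$, so $q$ divides the square-free part of $n_m^2+16$ and hence ramifies in $k_2$; conditions \ref{1a} and \ref{1b} follow by copying the analogue of the proof of Proposition~\ref{deg6_prop6}.

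The main obstacle is, as in the sextic case, the irreducibility input: constructing $g(X,Y)\in\Z[X,Y]$, proving it irreducible, and in particular verifying an Eisenstein-type criterion for a single specialization (the analogue of Lemma~\ref{deg6_lem2}), all while keeping the construction uniform in the parameter $n$. By contrast the ramification bookkeeping of the last paragraph is routine once the dictionary is fixed; the only point requiring care is that every auxiliary modulus be kept coprime to $q$, which is exactly what $q\nmid 5r$, together with the infinitude built into the choice of $c_r$, secures.
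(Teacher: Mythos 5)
Your proposal follows essentially the same route as the paper: Section 3 is itself an abridged transcription of Section 2 under exactly your dictionary ($f_n(-2)=6n-7$, exceptional prime $5$, $w=(2+\rho_1)/(2+\rho_3)$, $U=\langle -1,\varepsilon,\rho_0,\rho_1\rangle$, residual obstruction $w/\rho_1$, Hilbert irreducibility for $p_{n,r}(X)$ via a two-variable $g(X,Y)$, Bauer's theorem for the $\ell_i$, and the $n_m\equiv n_0\pmod{q^2}$ ramification argument), and the paper's proof of this proposition then copies that of Proposition \ref{deg6_prop1} nearly verbatim. The only detail where your expectation diverges is the specialization lemma (the analogue of Lemma \ref{deg6_lem2}), which the paper proves not by an Eisenstein-type criterion but by specializing to $-f_{-103}(X^r)$ and exhibiting $\rho=u_1^2u_2^3$ in an explicit fundamental system of units of $K_{103}$ computed with PARI.
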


\noindent Once Proposition \ref{thm1} is established we have the following 

\begin{proof}[Proof of Theorem \ref{thm2}]
Replace $q \nmid 210r$ with $q \nmid 5r$ in the proof of Theorem \ref{deg6_thm1}.
\end{proof}

\noindent Let 
$$
w:= \frac{2+\rho_1}{2+\rho_3}
$$

\begin{proposition}\label{deg4_prop2}
If there is a $y \in \Z$ with $5\nmid y$ and such that $y^r = 6n-7=f_n(-2),$ then $w\o_{K_n} = J^r$ for some ideal $J \in \mathcal{I}_{K_n}.$
\end{proposition}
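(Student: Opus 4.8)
The plan is to transcribe the proof of Proposition~\ref{deg6_prop2} to the quartic setting, with the role of the prime $7$ now played by $5$ and the factors $3+\rho_i$ replaced by $2+\rho_i$. Since $w=(2+\rho_1)/(2+\rho_3)$ is a quotient of two such factors, it suffices to show that each ideal $(2+\rho_i)\o_{K_n}$ is an $r$th power; the ideal $w\o_{K_n}$ then inherits this property as a quotient of $r$th powers.

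The one ingredient that needs its own (brief) verification is the analog of Lemma~\ref{deg6_lem1}: \emph{if a prime $p$ divides both $6n-7$ and $\disc(f_n)$, then $p=5$.} Here $\disc(f_n)=4(n^2+16)^3$, so for odd $p$ one has $p\mid\disc(f_n)$ if and only if $p\mid n^2+16$. If in addition $p\mid 6n-7$ and $p\neq 2,3$, then $6n\equiv 7\pmod p$ gives
$$
36(n^2+16)=(6n)^2+576\equiv 49+576=625\pmod p,
$$
so $p\mid n^2+16$ forces $p\mid 625$, i.e.\ $p=5$; the cases $p=2,3$ are excluded because $6n-7$ is coprime to $6$. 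This congruence reduction is the only genuinely new computation.

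With this lemma in hand the main argument runs exactly as before, and is in fact slightly cleaner: the $\rho_i$ are algebraic integers for \emph{every} $n$ (no congruence $n\equiv 2\pmod 4$ is needed here), so each $2+\rho_i\in\o_{K_n}$. Since $5\nmid y$, the lemma shows that no prime $p\mid y$ divides $\disc(f_n)$, hence every such $p$ is unramified in $K_n$. Fixing $p\mid y$ and a prime $\frak{P}\mid p\o_{K_n}$, the relation $\prod_{i=0}^3(2+\rho_i)=f_n(-2)=6n-7=y^r\in\frak{P}$ yields $2+\rho_i\in\frak{P}$ for some $i$; because $p\nmid\disc(f_n)$, no two distinct conjugates of $2+\rho_i$ can lie in one conjugate of $\frak{P}$, so the ideals $\frak{P}^{\sigma^i}$ are all distinct (hence $p$ splits completely) and each contains exactly one conjugate of $2+\rho_i$.

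Finally I would carry out the exponent bookkeeping as in Proposition~\ref{deg6_prop2}: writing $\frak{P}_i$ for the unique prime above $p$ dividing $(2+\rho_i)\o_{K_n}$ and $p^{\nu(p)}$ for the exact power of $p$ dividing $y$, complete splitting of $p$ shows that $\frak{P}_i^{r\nu(p)}$ is the exact power of $\frak{P}_i$ dividing $(2+\rho_i)\o_{K_n}$. Hence $(2+\rho_i)\o_{K_n}=\big(\prod_{p\mid y}\frak{P}_i^{\nu(p)}\big)^r$ for each $i$, and therefore $w\o_{K_n}=J^r$ with $J=\prod_{p\mid y}(\frak{P}_1\frak{P}_3^{-1})^{\nu(p)}$. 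I expect no real obstacle beyond the discriminant lemma above; every remaining step is a faithful copy of the sextic argument.
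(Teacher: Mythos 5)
Your proposal is correct and follows exactly the route the paper takes: the paper's own proof simply instructs the reader to transcribe Proposition~\ref{deg6_prop2} with $7\nmid y$ replaced by $5\nmid y$, $3+\rho_i$ by $2+\rho_i$, and $f_n(-3)$ by $f_n(-2)$, and your discriminant computation reproduces the paper's Lemma~\ref{deg4_lem1} (that a common prime divisor of $6n-7$ and $\disc(f_n)=4(n^2+16)^3$ must be $5$). You have merely written out the details the paper leaves implicit, including the observation that no congruence condition on $n$ is needed since the $\rho_i$ are always algebraic integers.
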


\noindent For the proof of this proposition we require the following easy lemma:

\begin{lemma} \label{deg4_lem1}
If a prime $p$ is a common divisor of $6n-7$ and $\disc(f_n),$ then $p = 5.$
\end{lemma}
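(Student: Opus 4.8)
The plan is to mirror the proof of Lemma \ref{deg6_lem1} from the previous section, replacing the sextic discriminant with its quartic analog. The first step is to record the discriminant of $f_n$. A routine computation---either directly from the standard quartic discriminant formula or, more cheaply, by determining the constant and exponent from a couple of small values (e.g.\ $n=0$ gives $\disc(f_n)=2^{14}$ and $n=1$ gives $19652=4\cdot 17^3$)---yields
$$
\disc(f_n) = 2^2(n^2+16)^3.
$$
This is consistent with the fact that $k_2=\Q(\sqrt{n^2+16})$ is the quadratic subfield, since $2^2(n^2+16)^3 = [2(n^2+16)]^2\,(n^2+16)$. Consequently a prime $p$ dividing $\disc(f_n)$ must satisfy either $p=2$ or $p\mid n^2+16$.

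Next I would dispose of the constant factor. Reducing modulo the small primes shows that $6n-7\equiv 1\pmod 2$ and $6n-7\equiv 2\pmod 3$ for every integer $n$, so $6n-7$ is coprime to $6$. In particular any common divisor $p$ of $6n-7$ and $\disc(f_n)$ satisfies $p\neq 2,3$; this eliminates the factor $2^2$ and forces $p\mid n^2+16$.

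Finally I would combine the two congruences. Since $p\nmid 6$, squaring $6n-7\equiv 0\pmod p$ gives $36n^2\equiv 49\pmod p$, whence
$$
36(n^2+16)\equiv 49+576 = 625 = 5^4 \pmod p.
$$
As $p$ divides $n^2+16$, the left-hand side vanishes modulo $p$, so $p\mid 5^4$ and therefore $p=5$. This is precisely the quartic counterpart of the identity $900(n^2+108)\equiv 7^6\pmod p$ underlying the sextic case, with the roles of $5$ and $7$ interchanged.

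The only nonroutine ingredient is the discriminant formula $\disc(f_n)=2^2(n^2+16)^3$; once this is in hand the remaining congruence bookkeeping is immediate, so I anticipate no genuine obstacle beyond verifying that single computation.
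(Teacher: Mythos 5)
Your proof is correct and follows the same route as the paper, whose proof is a one-line assertion that the two congruences force $p=5$ given $\disc(f_n)=4(n^2+16)^3$; your resultant-style computation $36(n^2+16)\equiv 49+576=5^4\pmod{p}$ simply makes explicit the arithmetic the paper leaves implicit.
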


\begin{proof}
The congruences $6n-7 \equiv 0 \pmod{p}$ and $\disc(f_n)= 4(n^2+16)^3\equiv 0 \pmod{p}$ hold simultaneously only if $p=5.$
\end{proof}

\begin{proof}[Proof of Proposition \ref{deg4_prop2}]
In the proof of Proposition \ref{deg6_prop2} above, replace $7 \nmid y$ with $5 \nmid y.$ 
Also restrict the indices $0 \le i \le 3$ and replace $(3+\rho_i)$ with $(2+\rho_i)$ and $f_n(-3)$ with $f_n(-2).$  Finally note that no restriction on the congruence class of $n$ 
is required, because $\rho$ and its conjugates are algebraic integers.
\end{proof}

\noindent Let $\varepsilon$ denote the fundamental unit of $k_2$ and let $U:=\langle -1, \varepsilon, \rho_0, \rho_1 \rangle \le \o_K^\times.$  We will need to know that $\lbrack \o_K^\times : U \rbrack < \infty.$  
It suffices to prove the following.

\begin{lemma}
The subset $S:=\{\varepsilon, \rho_0, \rho_1\} \subset \o_K^\times$ is multiplicatively independent.
\end{lemma}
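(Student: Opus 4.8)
The plan is to reduce the multiplicative independence of $S$ to the nonvanishing of its regulator $R$, and then to use the cyclic Galois action to put $R$ into closed form. Since $K/\Q$ is totally real of degree $4$, it has four real embeddings, permuted by $G=\langle\sigma\rangle$; writing $\ell_j(u):=\log\vert\sigma^j(u)\vert$ for a unit $u$, the three elements of $S$ are multiplicatively independent precisely when the $3\times 3$ matrix with entries $\ell_j(u)$, $0\le j\le 2$ and $u\in S$, is nonsingular. (One embedding may be discarded because $\sum_{j=0}^{3}\ell_j(u)=\log\vert N_{K\mid\Q}(u)\vert=0$ for every unit $u$.)

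Next I would evaluate the three columns using the relations recorded for this family. As $\varepsilon$ lies in $k_2$, the fixed field of $\sigma^2$, we have $\sigma^2(\varepsilon)=\varepsilon$ while $\sigma(\varepsilon)=\varepsilon^\sigma$ with $\varepsilon\,\varepsilon^\sigma=N_{k_2\mid\Q}(\varepsilon)=\pm 1$, so the $\varepsilon$-column reads $\log\vert\varepsilon\vert,\,-\log\vert\varepsilon\vert,\,\log\vert\varepsilon\vert$ from top to bottom. For the remaining columns I would use $\rho_2=-1/\rho_0$ and $\rho_3=-1/\rho_1$, both immediate from the displayed formulas for the $\rho_i$, so that $\log\vert\rho_2\vert=-\log\vert\rho_0\vert$ and $\log\vert\rho_3\vert=-\log\vert\rho_1\vert$. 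Writing $a=\log\vert\varepsilon\vert$, $b=\log\vert\rho_0\vert$, $c=\log\vert\rho_1\vert$, the matrix becomes
\[
\begin{pmatrix}
a & b & c\\
-a & c & -b\\
a & -b & -c
\end{pmatrix},
\]
and a short cofactor expansion along the first column gives determinant $-2a(b^2+c^2)$, so that $R=2\,\vert a\vert\,(b^2+c^2)$.

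It then remains only to check that $R\ne 0$. The factor $a=\log\vert\varepsilon\vert$ is nonzero because $\varepsilon>1$, and $b^2+c^2\ne 0$ already because $\log\vert\rho_0\vert\ne 0$: since $\rho_0=\rho$ is real, $\vert\rho_0\vert=1$ would force $\rho=\pm 1$, whereas $f_n(\pm 1)=-4\ne 0$. Hence $R\ne 0$ and $S$ is multiplicatively independent.

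Unlike the sextic computation of the previous section, the main work here is simply to carry out the determinant reduction correctly; once the matrix collapses to the displayed form, $R$ factors cleanly and its nonvanishing requires no location of the roots of $f_n$ in prescribed intervals and no case-by-case machine check. The only step that demands care is verifying the three conjugacy relations---the $\sigma^2$-invariance of $\varepsilon$ together with the reciprocal identities $\rho_0\rho_2=\rho_1\rho_3=-1$---since a sign error among them would alter the pattern in the matrix and hence the factored form of $R$.
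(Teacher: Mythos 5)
Your proof is correct and follows essentially the same route as the paper: both reduce multiplicative independence to the nonvanishing of the $3\times 3$ log-embedding determinant, use the Galois action ($\sigma^2$-invariance of $\varepsilon$ and $\rho_0\rho_2=\rho_1\rho_3=-1$) to collapse it to $\pm 2\log\vert\varepsilon\vert\,(\log^2\vert\rho_0\vert+\log^2\vert\rho_1\vert)$, and conclude. The only (harmless) divergence is at the last step, where the paper locates a root $\rho_0>1$ while you observe that $f_n(\pm 1)=-4\neq 0$ forces $\log\vert\rho_0\vert\neq 0$; both suffice.
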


\begin{proof}
Let $R$ denote the regulator of $S.$  Since $f_n(1)=-4$ and since 
$f_n(x) \to \infty$ as $x \to \infty,$ we know that $f_x(X)$ has a zero $>1.$  From 
the from of this zero Galois conjugates, we deduce that $f_n(X)$ has another positive 
zero in the interval $(0,1)$ and that the remaining two zeros of $f_n(X)$ are negative reciprocals 
of the two positive zeros.  
Taking $\rho_0$ to be the zero of $f_n(X)$ greater than $1,$ we find that
\begin{align*}
R &= \left\vert\det
\begin{pmatrix}
\log \rho_0 &\log \rho_1 &\log \varepsilon\\
\log \rho_1 &-\log \rho_0 &\log \vert \varepsilon^\sigma \vert\\
-\log \rho_0 &-\log \rho_1 &\log \varepsilon
\end{pmatrix} \right\vert\\
&= 2\log\varepsilon\; (\log^2\rho_0 + \log^2\rho_1).
\end{align*}
Since $\varepsilon > 1$ and $\rho_0 > 1,$ we conclude that $R > 0$ and 
that $S$ is a multiplicatively independent subset of $\o_K^\times.$
\end{proof}

\noindent We gently remind the reader that the notation (introduced in Section 1 above)
$(a\,\vert\,\ell)_p=1$ (resp. $(a\,\vert\,\ell)_p\neq 1$) means that 
$a$ is a $p$th power $\textmd{residue}\pmod{\ell}$ (resp. $a$ is a $p$th power $\textmd{nonresidue}\pmod{\ell}.$)

\begin{lemma}
Let $p$ be an odd prime.  
If a prime $\ell \mid 6n-7 = f_n(-2)$ with $\ell\neq 5$ is such that 
$(2\,\vert\,\ell)_p=1$ and $(3\,\vert\,\ell)_p\neq 1,$ then 
$p \nmid \lbrack \o_K^\times : U \rbrack.$
\end{lemma}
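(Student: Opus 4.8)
The plan is to mimic the structure of the corresponding sextic lemma (the one furnishing primes $\ell_1,\ell_2$), but with one genuinely new wrinkle: here a \emph{single} prime $\ell$ must pin down \emph{both} exponents attached to the relative units $\rho_0,\rho_1$, whereas the sextic argument spent two primes. First I would suppose, for contradiction, that $p \mid \lbrack \o_K^\times : U\rbrack$. Since $U$ has finite index in $\o_K^\times$ (by the preceding multiplicative independence lemma), there is a unit $u \in \o_K^\times \smallsetminus U$ with $u^p \in U$. Writing $u^p = \pm\varepsilon^a\rho_0^b\rho_1^c$, reducing the exponents modulo $p$ by absorbing $p$th powers into $u$, and absorbing the sign into $u$ as well (legitimate because $p$ is odd, so $-1=(-1)^p$ and $\pm 1 \in U$), I may assume $u^p = \varepsilon^a\rho_0^b\rho_1^c$ with $0 \le a,b,c < p$ while still $u \notin U$.

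The next step removes the $\varepsilon$-exponent by taking the norm down to $k_2$. Since $\gal(K/k_2)=\langle\sigma^2\rangle$, one computes $N_{K\mid k_2}(\varepsilon)=\varepsilon^2$, together with $N_{K\mid k_2}(\rho_0)=\rho_0\rho_2=-1$ and $N_{K\mid k_2}(\rho_1)=\rho_1\rho_3=-1$ (this is just the relative-unit property made explicit). Hence $N_{K\mid k_2}(u)^p=\pm\varepsilon^{2a}$; as $N_{K\mid k_2}(u)=\pm\varepsilon^{j}$ for some $j\in\Z$, comparing powers of the fundamental unit gives $jp=2a$, so $p\mid 2a$ and, $p$ being odd, $a=0$. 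We are left with $u^p=\rho_0^b\rho_1^c$.

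Now I would exploit the fact that $\ell$ splits completely in $K$. Because $\ell\mid 6n-7=f_n(-2)$ and $\ell\neq 5$, Lemma~\ref{deg4_lem1} gives $\ell\nmid\disc(f_n)$, so $\ell$ is unramified; since $-2$ is a (necessarily simple) root of $f_n\bmod\ell$, there is a prime of residue degree one above $\ell$, and because $K/\Q$ is cyclic this forces $\ell$ to split completely into four distinct primes, each with residue field $\mathbb{F}_\ell$. Choose $\mathcal{L}$ with $\rho_0\equiv -2\pmod{\mathcal{L}}$; then $\rho_1=(\rho_0-1)/(\rho_0+1)\equiv 3\pmod{\mathcal{L}}$, and reducing $u^p=\rho_0^b\rho_1^c$ shows that $(-2)^b3^c$ is a $p$th power modulo $\ell$. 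Using $(-1\,\vert\,\ell)_p=1$ (odd $p$) and $(2\,\vert\,\ell)_p=1$, this collapses to the condition that $3^c$ is a $p$th power modulo $\ell$; since the class of $3$ is nontrivial, it generates $\mathbb{F}_\ell^\times/(\mathbb{F}_\ell^\times)^p\cong\Z/p\Z$, whence $c=0$.

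To recover $b$ I would reduce the \emph{same} relation through the conjugate ideal $\mathcal{L}^\sigma=\sigma(\mathcal{L})$, which contains $\sigma(2+\rho_0)=2+\rho_1$, so that $\rho_1\equiv -2$ and hence $\rho_0\equiv -1/3\pmod{\mathcal{L}^\sigma}$. Reducing $u^p=\rho_0^b\rho_1^c$ there shows $(-1/3)^b(-2)^c$ is a $p$th power modulo $\ell$; the contributions of $-1$ and $2$ are $p$th powers as before, leaving the requirement that $3^{-b}$ be a $p$th power and therefore $b=0$. With $a=b=c=0$ we obtain $u^p=1$, so $u=1\in U$ (the field is totally real and $p$ is odd), contradicting $u\notin U$; thus $p\nmid\lbrack\o_K^\times:U\rbrack$. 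The one step deserving real care is establishing that $\ell$ splits \emph{completely}, for it is precisely this that puts the two conjugate reductions through $\mathcal{L}$ and $\mathcal{L}^\sigma$ into the same residue field $\mathbb{F}_\ell$; that shared residue field is what allows a single prime to decouple $b$ from $c$ and so replaces the two-prime device used in the sextic lemma.
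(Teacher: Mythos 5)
Your proof is correct and rests on the same core mechanism as the paper's: reduce the putative relation $u^p=\pm\varepsilon^a\rho_0^b\rho_1^c$ at residue-degree-one primes above $\ell$ and let the hypotheses $(2\,\vert\,\ell)_p=1$, $(3\,\vert\,\ell)_p\neq 1$ force the exponents to vanish. The execution differs in two ways worth recording. First, you eliminate $a$ at the outset via $N_{K\mid k_2}(u)^p=\pm\varepsilon^{2a}$; the paper never needs to, because in its conjugate products $(uu^\sigma)^p$ and $(u^\sigma u^{\sigma^2})^p$ the factor $\varepsilon\varepsilon^\sigma=N_{k_2\mid\Q}(\varepsilon)=\pm 1$ drops out automatically, and $a$ is dispatched only at the end by noting that $u^p=\varepsilon^a$ is fixed by $\sigma^2$, hence $u\in\o_K^\times\cap k_2\subset U$. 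Second, you decouple $b$ from $c$ by reducing the single relation $u^p=\rho_0^b\rho_1^c$ at the two conjugate primes $\mathcal{L}$ and $\mathcal{L}^\sigma$, whereas the paper reduces two conjugate products at the single prime $\mathcal{L}$; these are the same computation seen from opposite ends (reducing $x$ modulo $\sigma(\mathcal{L})$ is reducing $\sigma^{-1}(x)$ modulo $\mathcal{L}$), and each yields two independent linear conditions on $(b,c)$ modulo $p$. Your complete-splitting argument (a simple root of $f_n$ modulo $\ell$ together with $K/\Q$ Galois) is sound and equivalent to the paper's count of the conjugates of $2+\rho_0$; note only that the implicit use of $p\mid\ell-1$ in identifying $\mathbb{F}_\ell^\times/(\mathbb{F}_\ell^\times)^p$ with $\Z/p\Z$ is justified precisely because $(3\,\vert\,\ell)_p\neq 1$ already forces it.
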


\begin{proof}
If $p \mid \lbrack \o_K^\times : U \rbrack,$ then there is a $u \in \o_K^\times \smallsetminus U$ such that 
$u^p = \pm \varepsilon^a\rho_0^b\rho_1^c.$  Without loss of generality we may take 
$u^p = \varepsilon^a\rho_0^b\rho_1^c$ with $0 \le a,b,c < p.$
Since $\ell$ divides $f_n(-2) = \prod_{i=0}^3(2+\rho_i),$ any prime ideal $\mathcal{L}$ dividing 
$\ell\o_K$ must contain one of the conjugates $2+\rho_0.$  Furthermore every 
conjugate $2+\rho_0$ must be contained in some prime ideal over $\ell,$ since the Galois group acts 
transitively on the set of conjugates of $2+\rho_0$ and since it takes prime ideals of $\o_K$ to 
prime ideals of $\o_K.$  Since $\ell \neq 5$ and since $\ell \mid 6n-7,$ we know that 
$\ell \nmid \disc(f_n),$ and so we deduce that every conjugate of $2+\rho_0$ is contained in 
a unique prime ideal over $\ell.$  Let $\Ell$ denote the unique prime ideal 
$\Ell \mid \ell\o_K$ such that $-2 \equiv \rho_0 \pmod{\Ell}.$  Modulo $\Ell$ this forces 
$\rho_1 \equiv 3,$ $\rho_2 \equiv 1/2,$ and $\rho_3 \equiv -1/3.$  Hence 
$$
(uu^\sigma)^p = (\pm 1)^a(\rho_0\rho_1)^b(\rho_1\rho_2)^c \equiv (\pm 1)^a(-1)^b2^{b-c}3^{b+c} 
\pmod{\Ell}
$$ and 
$$
(u^\sigma u^{\sigma^2})^p = (\pm 1)^a(\rho_1\rho_2)^b(\rho_2\rho_3)^c \equiv (\pm 1)^a(-1)^c2^{-(b+c)}3^{b-c} \pmod{\Ell}.$$  Applying the power residue hypotheses to these two congruences we find that 
$b \equiv c$ and $b \equiv -c \pmod{p}.$  Hence $b\equiv c \equiv 0 \pmod{p}$ and so $b=c=0.$  
Therefore $u^p = \varepsilon^a = (\varepsilon^a)^{\sigma^2} = (u^p)^{\sigma^2}.$  
Hence $u = u^{\sigma^2}.$  But then $u \in \o_K^\times \cap k_2 \subset U.$  We've reached a 
contradiction, and so are forced to conclude that $p \nmid \lbrack \o_K^\times : U \rbrack.$
\end{proof}

\begin{proposition}\label{deg4_prop3}
If there is an integer $y$ not divisible by $5$ satisfying $y^r = 6n-7=f_n(-2),$ 
and if for each prime $p\mid r$ the following conditions hold:
\begin{enumerate}
\item there are primes $\ell_i \mid $ $(i=1,2),$ such that
\begin{enumerate}
\item $(2\,\vert\,\ell_1)_p = 1$ and $(3\,\vert\,\ell_1)_p\neq 1$
\item $(3\,\vert\,\ell_2)_p = 1$ and $(2\,\vert\,\ell_2)_p\neq 1$
\end{enumerate}
\item $w/\rho_1$ fails to have a $p$th root in $K_n,$
\end{enumerate}
then $w\o_K = J^r$ where $\lbrack J \rbrack$ has order $r.$
\end{proposition}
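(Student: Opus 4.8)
The plan is to follow the template of the proof of Proposition \ref{deg6_prop3}, simplified by the fact that here there is a single proper subfield $k_2$ and the relevant unit group is $U=\langle -1,\varepsilon,\rho_0,\rho_1\rangle$. First I would invoke Proposition \ref{deg4_prop2}: since $5\nmid y$ and $y^r=6n-7$, we get $w\o_K=J^r$ for some ideal $J$, so the order of $[J]$ divides $r$. Arguing by contradiction, suppose this order is a proper divisor of $r$. Then, exactly as in the sextic case, I can choose a prime $p\mid r$ together with a principal ideal $z\o_K=J^{r/p}$ satisfying $(z\o_K)^p=w\o_K$, so that $w=uz^p$ for some unit $u\in\o_K^\times$.

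Next I would pin down $u$. Condition (1)(a) supplies a prime $\ell_1\mid 6n-7$ with $\ell_1\neq 5$, $(2\mid\ell_1)_p=1$ and $(3\mid\ell_1)_p\neq 1$, which is precisely the hypothesis of the lemma preceding this proposition; hence $p\nmid[\o_K^\times:U]$ and I may adjust $z$ by a unit so that $u\in U$. Since $p$ is odd I discard the sign and, after absorbing $p$th powers into $z^p$, write $u=\varepsilon^a\rho_0^b\rho_1^c$ with $0\le a,b,c<p$. Applying $N_{K\mid k_2}$ (that is, $1\cdot\sigma^2$) and using $\sigma^2(w)=w^{-1}$, so that $N_{K\mid k_2}(w)=1$, together with $N_{K\mid k_2}(\varepsilon)=\varepsilon^2$ and $N_{K\mid k_2}(\rho_0)=N_{K\mid k_2}(\rho_1)=-1$, I obtain $N_{K\mid k_2}(1/z)^p=\pm\varepsilon^{2a}$. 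Since the right side is a unit, $N_{K\mid k_2}(z)\in\o_{k_2}^\times=\langle -1,\varepsilon\rangle$, and comparing $\varepsilon$-exponents (with $p$ odd) forces $p\mid 2a$, hence $a=0$.

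The heart of the argument is then the local congruence computation. For $i=1,2$ I would pick a prime $\mathcal{L}_i\subset\o_K$ above $\ell_i$ with $\rho_0\equiv -2\pmod{\mathcal{L}_i}$; since $\ell_i\neq 5$ we have $\ell_i\nmid\disc(f_n)$ by Lemma \ref{deg4_lem1}, so (as in Proposition \ref{deg6_prop3}) $\ell_i$ splits completely, the residue field is $\mathbb{F}_{\ell_i}$, and because $w\equiv 3\not\equiv 0$ there (note $\ell_i\neq 3$, as $6n-7\equiv 2\pmod 3$) the element $z$ is a unit at $\mathcal{L}_i$. Reducing $w=\rho_0^b\rho_1^c z^p$ modulo $\mathcal{L}_i$, where $\rho_0\equiv -2$, $\rho_1\equiv 3$, and $w\equiv 3$, gives $z^p\equiv (-1)^b2^{-b}3^{1-c}\pmod{\mathcal{L}_i}$. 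Applying the $p$th power residue hypotheses---at $\ell_1$ forcing $3^{1-c}$ to be a $p$th power, hence $c=1$, and at $\ell_2$ forcing $2^{-b}$ to be a $p$th power, hence $b=0$---I conclude $w=\rho_1z^p$, i.e.\ $w/\rho_1=z^p$ has a $p$th root in $K_n$. This contradicts hypothesis (2), so the order of $[J]$ is exactly $r$.

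I do not expect a serious obstacle here, as the argument is a streamlined version of Proposition \ref{deg6_prop3}; the steps requiring the most care are the two bookkeeping points. One is verifying the residues $\rho_1\equiv 3$, $\rho_3\equiv -1/3$, and $w\equiv 3$ modulo $\mathcal{L}_i$ and confirming $\ell_i\notin\{3,5\}$, so that both $w$ and $z$ are genuinely units at $\mathcal{L}_i$ and the reduction is legitimate. The other is the norm step, where I must observe that a unit of $k_2$ whose $p$th power equals $\pm\varepsilon^{2a}$ has $\varepsilon$-exponent divisible by $p$; this is exactly where the oddness of $r$, and hence of $p$, is used to pass from $p\mid 2a$ to $a=0$.
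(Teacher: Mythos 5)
Your proposal is correct and follows essentially the same route as the paper: reduce to $u\in U$ via the unit-index lemma, kill the $\varepsilon$-exponent by taking $N_{K\mid k_2}$, and then use the residues of $\rho_0,\rho_1,w$ modulo primes $\mathcal{L}_i$ containing $\rho_0+2$ to force $b=0$, $c=1$ and contradict hypothesis (2). (Your $z^p\equiv(-1)^b2^{-b}3^{1-c}$ differs from the paper's stated $2^{b}$ only in a sign of the exponent, which is immaterial to the power-residue conclusion.)
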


\begin{proof}
The proof is nearly identical to that used in the proof of Proposition \ref{deg6_prop3} 
except that we assume the unit $u$ to have the form $u=\varepsilon^a\rho_0^b\rho_1^c.$  
Taking norms down to $k_2$ then allows us to deduce that $w = \rho_0^b\rho_1^cz^p.$  
Choosing the unique prime ideals $\mathcal{L}_i \mid \ell_i\o_K$ containing $\rho_0+2$ 
determines the classes of $\rho_0, \rho_1,$ and $w$ $\pmod{\mathcal{L}_i}$ 
and thereby allows us to deduce that $z^p \equiv (-1)^b2^b3^{1-c} \pmod{\ell_i}.$  
Applying the power residue hypotheses then forces $b=0$ and $c=1$ and so brings 
us to the contradiction $z^p = w/\rho_1.$
\end{proof}

\noindent To apply Proposition \ref{deg4_prop3}, we will need condition(s) sufficient to guarantee that  $w/\rho_1 \not\in (K_n^\times)^p$ for each prime $p \mid r.$
The next proposition provides us with such a condition.

\begin{proposition}\label{deg4_prop4}
Let 
$$p_{n,r}(X) := \sum_{k=0}^4 a_kX^{rk},$$
where 
\begin{align*}
&a_0=a_4=-a_2/6=6n-7\\
&a_1=-a_3=7n+96\\
\end{align*}
If $p_{n,r}(X)$ is irreducible in $\Q\lbrack X \rbrack,$ 
then $w/\rho_1$ fails to have a $p$th root in $K_n$ for every prime $p \mid r.$
\end{proposition}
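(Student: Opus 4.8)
The plan is to argue by contraposition, adapting the proof of Proposition~\ref{deg6_prop4} to the quartic setting. Suppose that for some prime $p \mid r$ there is a $z \in K_n^\times$ with $z^p = w/\rho_1$, so that $w/\rho_1$ is a $p$th power in $K_n$. Since $r$ is odd, the reducibility criterion invoked in Lemma~\ref{deg6_lem2} (namely \cite[Chapter VI, \S 9, Theorem 9.1]{lang}) shows that $X^r - w/\rho_1$ then factors non-trivially in $K_n\lbrack X\rbrack$. The strategy is to pass to the minimal polynomial of $w/\rho_1$ over $\Q$ and transfer this factorization down to $\Q$, where it becomes a factorization of $p_{n,r}(X)$.

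First I would establish that $w/\rho_1$ is a primitive element for $K_n/\Q$, so that its minimal polynomial $m(X) \in \Q\lbrack X\rbrack$ has degree $4$. Since $K_n/\Q$ is cyclic of degree $4$, its only proper intermediate field is $k_2$, the fixed field of $\sigma^2$, so it suffices to check that $\sigma^2(w/\rho_1) \neq w/\rho_1$. Using $\sigma^2(\rho_1) = \rho_3 = -1/\rho_1$ and $\sigma^2(w) = (2+\rho_3)/(2+\rho_1) = 1/w$, one computes $\sigma^2(w/\rho_1) = -\rho_1/w$; the equality $-\rho_1/w = w/\rho_1$ would force $w^2 + \rho_1^2 = 0$, which is impossible in the real field $K_n$. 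Hence $w/\rho_1 \notin k_2$ and $\Q(w/\rho_1) = K_n$.

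Next, let $\zeta$ be a root of $X^r - w/\rho_1$. Because $w/\rho_1 = \zeta^r \in \Q(\zeta)$ and $w/\rho_1$ is primitive, $K_n = \Q(w/\rho_1) \subseteq \Q(\zeta)$; and because $X^r - w/\rho_1$ is reducible over $K_n$ with $\zeta$ a root, $\lbrack\Q(\zeta):K_n\rbrack < r$, whence $\lbrack\Q(\zeta):\Q\rbrack < 4r$. On the other hand $m(\zeta^r) = m(w/\rho_1) = 0$, so $\zeta$ is a root of $m(X^r)$, a polynomial of degree $4r$; were $m(X^r)$ irreducible over $\Q$ we would get $\lbrack\Q(\zeta):\Q\rbrack = 4r$, a contradiction. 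Thus $m(X^r)$, and hence $d\,m(X^r)$ for $d = 6n-7$ a common denominator of its coefficients, factors non-trivially in $\Q\lbrack X\rbrack$. Identifying $d\,m(X^r)$ with $p_{n,r}(X)$ and taking the contrapositive then yields the proposition.

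I expect the only genuine labor to be the explicit determination of $m(X)$. Concretely one forms $m(X) = \prod_{i=0}^{3}\bigl(X - \sigma^i(w/\rho_1)\bigr)$, writes each conjugate as a rational function of $\rho$, and reduces the elementary symmetric functions modulo $f_n(\rho)$. The (anti)palindromic shape of the target coefficients ($a_0 = a_4$, $a_3 = -a_1$, $a_2 = -6a_0$) suggests $m(X) = X^{4} + \tfrac{7n+96}{6n-7}X^{3} - 6X^{2} - \tfrac{7n+96}{6n-7}X + 1$, so that $d = 6n-7$ and $d\,m(X^r) = p_{n,r}(X)$ exactly. This computation is routine but tedious, and is readily confirmed with a computer algebra system, just as in the sextic case.
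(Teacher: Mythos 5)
Your argument is correct and follows the paper's proof essentially verbatim: reducibility of $X^r - w/\rho_1$ over $K_n$, primitivity of $w/\rho_1$ (your $\sigma^2$-computation is the paper's observation that $N_{K_n\mid k_2}(w/\rho_1)=-1$ in disguise), the degree count $\lbrack\Q(\zeta):\Q\rbrack<4r$ against $\deg m(X^r)=4r$, and clearing denominators to land on $p_{n,r}(X)$. The only blemish is a sign slip in your guessed $m(X)$: for $(6n-7)\,m(X^r)=p_{n,r}(X)$ to hold with $a_1=-a_3=7n+96$, the $X^3$-coefficient of $m$ must be $-\tfrac{7n+96}{6n-7}$ and the $X$-coefficient $+\tfrac{7n+96}{6n-7}$, the negatives of what you wrote, though this lies in the part you explicitly defer to a computer algebra check.
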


\begin{proof}
If there is a $z \in K_n^\times$ such that 
$z^p = w/\rho_1$ with $p$ a prime dividing $r,$ then $X^r - w/\rho_1$ factors non-trivially in 
$K_n\lbrack X \rbrack.$  Let $m(X) \in \Q\lbrack X \rbrack$ 
denote the minimal polynomial of $w/\rho_1$ over $\Q.$  We note that 
$\deg m(X)=4,$ since $w/\rho_1$ is a primitive element for $K_n/\Q.$   
(Note that $N_{K_n \mid k_2}(w/\rho_1)=-1$, so the real number $w/\rho_1$ can't be contained in a 
proper subfield of $K_n.$)  Let $\zeta$ be a zero of $X^r - w/\rho_1.$
Since $X^r - w/\rho_1$ is 
reducible in $K_n\lbrack X \rbrack$ and since $\zeta^r = w/\rho_1,$ we deduce that 
$\lbrack \Q(\zeta):\Q\rbrack = \lbrack \Q(\zeta):K_n\rbrack\lbrack K_n:\Q\rbrack < 4r.$  
But since $m(\zeta^r)=0,$ we deduce that $m(X^r)$ factors non-trivially in $\Q\lbrack X \rbrack$ 
(because otherwise $\lbrack \Q(\zeta):\Q \rbrack = \deg m(X^r) = 4r.$)  
Since $p_{n,r}(X)=a_0m(X^r),$ we deduce that $p_{n,r}(X)$ 
factors non-trivially in $\Q\lbrack X \rbrack.$
Thus we have shown that the existence of a prime $p \mid r$ such that $w/\rho_1 \in (K_n^\times)^p$ implies that 
$p_{n,r}(X)$ admits a non-trivial factorization in $\Q\lbrack X \rbrack.$
\end{proof}

\noindent We now turn to the construction of a polynomial $g(X,Y)$ 
that is irreducible in $\Z\lbrack X, Y \rbrack$ and which has 
the property that for each $m \in \Z$ there is an $n_m \in \Z$ such that 
$g(X,m)=p_{n_m,r}(X).$  Applying Hilbert's Irreducibility Theorem \cite{hilbert}, as was done in Section 1, will then allow us to conclude that there are infinitely many $m \in \Z$ such that 
$p_{n_m,r}(X)$ is irreducible in $\Q\lbrack X \rbrack.$

\begin{proposition}\label{deg4_lem3}
Let $r>0$ be an odd integer and let $y_0,c_r \in \Z,$ where the latter is chosen such that 
$c_r \equiv 1 \pmod{6}.$  
Let 
$$
g(X,Y):= \sum_{k=0}^3 g_k(Y)X^{rk},
$$
where 
\begin{align*}
&g_0(Y)=g_4(Y)=-g_2(Y)/6 = (c_r(30(y_0+q^2Y)-1))^r\\
&g_1(Y)=-g_3(Y)=7(g_0(Y)+7)/6+96.
\end{align*}
Then $g(X,Y)$ is irreducible in $\Q\lbrack X,Y \rbrack$ and for each $m \in \Z,$ $g(X,m) = p_{n_m,r}(X),$ 
where $n_m$ is the unique integer satisfying $(c_r(30(y_0+q^2m)-1))^r=6n_m-7.$
\end{proposition}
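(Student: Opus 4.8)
The plan is to mirror the structure of Proposition~\ref{deg6_lem3} from Section~2, which this quartic statement directly parallels. The proof naturally splits into two independent assertions: first, that $g(X,Y)$ is irreducible in $\Q\lbrack X,Y\rbrack$; and second, that the substitution $Y\mapsto m$ recovers $p_{n_m,r}(X)$ for a unique $n_m\in\Z$. I would handle the substitution claim first, since it is the more routine of the two. The key observation is that $c_r\equiv 1\pmod 6$ forces $g_0(m)=(c_r(30(y_0+q^2m)-1))^r\equiv(-1)^r\equiv -1\pmod 6$ (using that $r$ is odd), so there is a unique integer $n_m$ with $6n_m-7=g_0(m)$, i.e.\ $g_0(m)=6n_m-7$. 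Comparing coefficients against the formulas in Proposition~\ref{deg4_prop4}, one checks directly that $g_0(m)=g_4(m)=-g_2(m)/6=6n_m-7=a_0$ and that $g_1(m)=7(g_0(m)+7)/6+96=7n_m+96=a_1$, so indeed $g(X,m)=p_{n_m,r}(X)$; uniqueness of $n_m$ is immediate from the linearity of $6n-7$ in $n$.

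For the irreducibility of $g(X,Y)$, I would follow the Section~2 template closely, first establishing the single-variable specialization analogue of Lemma~\ref{deg6_lem2}. That is, I would choose $t\in\Q$ with $c_r(30(y_0+q^2t)-1)=1$, so that $g_0(t)=1$ and $g(X,t)=h(X^r)$ for an explicit quartic $h(X)\in\Q\lbrack X\rbrack$ with $h(X)=X^4+(7\cdot 8/6+96)X^3-6X^2-(7\cdot 8/6+96)X+1$ after simplification. The goal is then to show $h$ is irreducible over $\Q$ (via an Eisenstein-type argument on a suitable shift or scaling, exactly as $25\times h(X+1)$ was Eisenstein at $13$ in the sextic case), and to show $X^r-v$ is irreducible over $\Q(v)$ for a root $v$ of $h$, invoking \cite[Chapter VI, \S 9, Theorem 9.1]{lang}. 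As before, this reduces to proving $v\notin(\Q(v))^p$ for every prime $p\mid r$, which one argues by exhibiting a prime (the analogue of $5$ in the sextic proof) that splits completely in $\Q(v)$ and clears the denominator of $v$ to the first power, so that $v$ has an ideal to the first power in its denominator and hence cannot be a $p$th power.

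With the single-variable specialization in hand, the two-variable irreducibility follows by the same factorization dichotomy used in Proposition~\ref{deg6_lem3}. Supposing $g(X,Y)=k(X,Y)j(X,Y)$ nontrivially, if both factors have positive $X$-degree then specializing at the chosen $t$ contradicts irreducibility of $g(X,t)$; hence one factor, say $k(Y)$, is a polynomial in $Y$ alone of positive degree dividing every coefficient $g_i(Y)$. The main obstacle, and the step requiring genuine care, is deriving the contradiction from this: I expect to form an explicit $\Q$-linear combination of the $g_i(Y)$ that collapses to a nonzero constant, playing the role of $-7^{12}$ in the sextic argument. Here the structure is simpler since no $\sqrt{g_0(Y)}$ term appears and $g_1(Y)$ is affine in $g_0(Y)$; a suitable combination such as $6g_1(Y)-7g_0(Y)$ reduces to a fixed constant ($=7\cdot 7+6\cdot 96=625=5^4$), which $k(Y)$ would then have to divide, contradicting $\deg_Y k(Y)>0$. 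Verifying that this constant is nonzero and genuinely forces the contradiction is the crux; once it is pinned down, the remaining bookkeeping matches Section~2 verbatim.
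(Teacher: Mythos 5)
Your overall architecture matches the paper's: a specialization lemma at a value of $t$ making the inner expression $\pm 1$, the dichotomy that a nontrivial factorization must have a factor $k(Y)$ of $X$-degree zero, the observation that $k(Y)$ then divides a $\Q$-linear combination of the $g_i(Y)$ equal to a nonzero constant, and the routine congruence argument producing $n_m$. Your identification of the constant is correct and is in fact the cleanest form of it: $6g_1(Y)-7g_0(Y)=49+576=625$ (the paper writes this combination as ``$6g_2(Y)-7g_1(Y)=625$,'' which does not compute to a constant as written; your version is the intended one). The substitution and uniqueness argument for $n_m$ is also correct and identical to the paper's.

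Where you genuinely diverge is in the specialization lemma, and this is also where your argument is only a plan rather than a proof. The paper specializes at $c_r(30(y_0+q^2t)-1)=-1$, so that $g_0(t)=(-1)^r=-1$, $n=1$, and $g(X,t)=-f_{-103}(X^r)$ is an \emph{integral} polynomial whose degree-$4$ part is the minimal polynomial of a \emph{unit} $\rho$ of the simplest quartic field $K_{103}$; it then computes a fundamental system of units $u_1,u_2,u_3$ with PARI, observes $\rho=u_1^2u_2^3$, and concludes $\rho\notin(K^\times)^p$ because no prime divides both $2$ and $3$. You instead specialize at $+1$, getting $g_0(t)=1$, $n=4/3$, and a quartic $h$ with denominator $3$ in its coefficients, and you propose to port the sextic argument (a prime that is unramified/split in $\Q(v)$ and occurs to exactly the first power in the denominator of $v$). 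Your route can be made to work --- one can check that $3$ is unramified in $\Q(v)$ and that $h$ is irreducible --- but these two facts are precisely the content of the lemma, and you assert them rather than verify them: you do not exhibit the Eisenstein prime and shift (it is not obvious one exists for your $h$; the analogue of ``$25\,h(X+1)$ Eisenstein at $13$'' has to be found, not assumed), and you do not verify the splitting/ramification behaviour of $3$ in $\Q(v)$, which is not automatic since $3$ divides the discriminant of the order $\Z[3v]$. (There is also a harmless sign slip in your displayed $h$: the $X^3$ coefficient should be $-(7\cdot 8/6+96)$ and the $X$ coefficient $+(7\cdot 8/6+96)$.) In short: same skeleton, correct two-variable argument, but a different and unverified computational core for the one-variable irreducibility, where the paper instead trades the denominator-valuation argument for an explicit unit computation in $K_{103}$.
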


\noindent To establish the proposition, we first need the following 
\begin{lemma}\label{deg4_lem2}
Let $t \in \Q$ be such that $c_r(30(y_0+q^2t)-1)=-1.$  Then $g(X,t)$ is irreducible in $\Q\lbrack X \rbrack.$
\end{lemma}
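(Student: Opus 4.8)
The plan is to follow the template of the proof of Lemma~\ref{deg6_lem2}, adapting the one step where the quartic family behaves genuinely differently from the sextic family. First I would carry out the specialization. Since $r$ is odd and $c_r(30(y_0+q^2t)-1)=-1$, we get $g_0(t)=(-1)^r=-1$, and substituting this value into the coefficients $g_k$ yields
$$g(X,t)=-h(X^r), \qquad h(Y):=Y^4+103Y^3-6Y^2-103Y+1.$$
Thus $g(X,t)$ is irreducible in $\Q\lbrack X\rbrack$ if and only if $h(X^r)$ is. The polynomial $h$ is irreducible over $\Q$: one checks directly that $h$ reduces modulo $3$ to the irreducible quartic $Y^4+Y^3+2Y+1$ (it has no root and admits no factorization into two quadratics over $\F_3$). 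Conceptually this is no accident, since $h$ is the minimal polynomial of $w/\rho_1$ specialized at $n=1$, which is a primitive element of $K_1$ by the argument in Proposition~\ref{deg4_prop4}.

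Next I would reduce to a statement about $p$th powers exactly as in the sextic case. Letting $v$ be a root of $h$, a nontrivial factorization of $h(X^r)$ in $\Q\lbrack X\rbrack$ would force a nontrivial factorization of $X^r-v$ in $\Q(v)\lbrack X\rbrack$, so it suffices to prove $X^r-v$ irreducible over $\Q(v)$. Because $r$ is odd, the exceptional case ``$4\mid r$'' of \cite[Chapter VI, \S 9, Theorem 9.1]{lang} does not arise, and irreducibility of $X^r-v$ over $\Q(v)$ reduces to establishing
$$v\notin(\Q(v))^p \quad\text{for every prime } p\mid r.$$

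The hard part, and the real departure from the sextic argument, is this last claim. In Lemma~\ref{deg6_lem2} the root $v$ was \emph{not} an algebraic integer, and one produced a prime of $\Q(v)$ dividing its denominator to exactly the first power, instantly ruling out $v$ being a $p$th power for every $p$. Here that device is unavailable: $h$ is monic with constant term $1$, so $N_{\Q(v)\mid\Q}(v)=1$ and $v$ is a \emph{unit} of $\Q(v)=K_1$ with trivial denominator. I therefore plan to argue at the level of the unit group instead. Since $K_1$ is a fixed totally real cyclic quartic field, $\o_{K_1}^\times\cong\{\pm1\}\times\Z^3$; fixing a fundamental system of units $u_1,u_2,u_3$ (computed with PARI \cite{pari}, as already done above) one writes $v=\pm u_1^{e_1}u_2^{e_2}u_3^{e_3}$. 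Because $-1=(-1)^p$ for odd $p$, the unit $v$ is a $p$th power in $K_1$ precisely when $p\mid\gcd(e_1,e_2,e_3)$. Hence it suffices to verify the single arithmetic fact $\gcd(e_1,e_2,e_3)=1$, a finite computation in the explicit field $K_1$. This rules out $v\in(\Q(v))^p$ for \emph{every} odd prime $p$, in particular for every $p\mid r$, and completes the reduction. I expect the unit-group step to be the only nonroutine point, precisely because the integrality of $v$ blocks the valuation-theoretic shortcut that sufficed in the sextic setting.
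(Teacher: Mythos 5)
Your proposal follows the paper's proof essentially step for step: the paper likewise specializes to $g(X,t)=-f_{-103}(X^r)$ (your $h$ is exactly $f_{-103}$, and $\Q(v)=K_{-103}=K_{103}$), reduces via \cite[Chapter VI, \S 9, Theorem 9.1]{lang} to showing a root is not a $p$th power, and---since that root is a unit---expresses it in a PARI-computed fundamental system as $u_1^2u_2^3$, so that $\gcd(2,3)=1$ finishes the argument. The only piece you leave pending, the explicit check that the exponent vector has coprime entries, is precisely the computation the paper records.
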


\begin{proof}
Note that $g(X,t)=-f_{-103}(X^r).$  
For this proof, let $f(X):=f_{-103}(X)$ and $K:= K_{-103}=K_{103}.$  If $g(x,t)=0,$ then 
$\prod_{i=0}^3(x^r-\rho_i) = f(x^r)=0,$ and so $x$ is zero of 
$X^r - \rho_i \in K\lbrack X \rbrack$ for some index $i.$  Let $\rho := \rho_i$ and 
note that it will suffice to show that $X^r - \rho$ is irreducible in $K\lbrack X \rbrack.$  
By \cite[Chapter VI, \S 9, Theorem 9.1]{lang}
it is sufficient to show that $\rho \not\in (K^\times)^p$ for each prime $p \mid r.$  
Using PARI \cite{pari} we find that a fundamental system of units of $K$ is $\{u_i\}_{i=1}^3,$ where
\begin{align*}
u_1&=(1/125)\rho^3+(101/125)\rho^2-(208/125)\rho+63/125\\
u_2&=(1/250)\rho^3+(53/125)\rho^2+(156/125)\rho+83/250\\
u_3&=(1/250)\rho^3+(48/125)\rho^2-(364/125)\rho-207/250.
\end{align*}
In terms of these units, $\rho=u_1^2u_2^3.$  If $z \in K^\times$ is such that $z^p = \rho$ for some prime $p,$ 
then $z\in \o_K^\times$; say $z = \prod_{i=1}^3u_i^{a_i}.$  So 
$\left(\prod_{i=1}^3u_i^{a_i}\right)^p = u_1^2u_2^3.$ Since there is no prime $p$ dividing both $2$ and $3,$ 
we conclude that $\rho \not\in (K^\times)^p$ for each prime $p \mid r.$
\end{proof}

\begin{proof}[Proof of Proposition \ref{deg4_lem3}]
Suppose that $g(X,Y) = k(X,Y)j(X,Y)$ is a non-trivial factorization of $g(X,Y)$ in $\Q\lbrack X, Y\rbrack$ 
and then by choosing $t \in \Q$ such that $c_r(30(y_0+q^2t)-1)=-1$ proceed 
as in the proof of Lemma \ref{deg6_lem3} to show that one of the factors must have 
degree $0$ in $X$; say $g(X,Y)=K(Y)j(X,Y).$
Without loss of generality assume that 
$k(Y)$ is irreducible in $\Q\lbrack X,Y\rbrack.$  Hence $k(Y)$ divides each $g_i(Y)$  
and we conclude that $k(Y)$ divides 
$6g_2(Y)-7g_1(Y) = 625.$  But $k(Y)$ is a non-trivial factor of 
$g(X,Y)$ in $\Q\lbrack X,Y \rbrack$ and so must have degree greater than $0$ in $Y.$  
This contradiction reveals that $g(X,Y)$ is irreducible in $\Q\lbrack X,Y \rbrack.$  For each $m \in \Z,$ 
$(c_r(30(y_0+q^2m)-1))^r \equiv (-c_r)^r \equiv -1 \pmod{6}.$  
Hence for each $m \in \Z$ there is an $n_m \in \Z$ such that 
$(c_r(30(y_0+q^2m)-1))^r=6n_m-7$ and hence such that 
$g(X,m) = p_{n_m,r}(X).$  The uniqueness of $n_m$ is clear.
\end{proof}

\noindent We now prove the following.

\begin{proposition}\label{deg4_prop6}
There is a $y \in \Z$ for which Proposition \ref{deg4_prop3} holds and hence a field $K_n \in \frak{F}$ for which 
\begin{enumerate}[label=(\roman*)]
\item there is an ideal $J \in \mathcal{I}_{K_n}$ whose ideal class, $\lbrack J \rbrack,$ has order $r.$
\item $N_{K \vert k_2}(J)$ is a principal ideal.
\end{enumerate}
\end{proposition}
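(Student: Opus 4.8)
The plan is to transcribe the proof of Proposition~\ref{deg6_prop6}, replacing the sextic ingredients by their quartic counterparts (Propositions~\ref{deg4_prop2}, \ref{deg4_prop3}, \ref{deg4_prop4}, and~\ref{deg4_lem3}) and taking advantage of two simplifications: no congruence on $n$ modulo $4$ is needed, since $\rho$ and its conjugates are algebraic integers for every $n$, and only the rational primes $2$ and $3$ occur in the power-residue conditions of Proposition~\ref{deg4_prop3}. The one genuinely new step is a quartic analog of Lemma~\ref{deg6_lem4}: I would first produce a constant $c_r$ with $c_r \equiv 1 \pmod 6$ and $(5q, c_r) = 1$ such that for each prime $p \mid r$ there are primes $\ell_1, \ell_2 \mid c_r$ with $(2\,\vert\,\ell_1)_p = 1$, $(3\,\vert\,\ell_1)_p \neq 1$ and $(3\,\vert\,\ell_2)_p = 1$, $(2\,\vert\,\ell_2)_p \neq 1$. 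The construction mirrors Lemma~\ref{deg6_lem4} but is lighter: setting $L_1 = \Q(\zeta_p, 2^{1/p})$ and $L_2 = \Q(\zeta_p, 3^{1/p})$, the multiplicative independence of $2$ and $3$ in $\Q^\times/(\Q^\times)^p$ gives $L_2 \not\subset L_1$, so Bauer's Theorem supplies infinitely many primes splitting completely in $L_1$ but not in $L_2$; any such prime $\ell_1$ satisfies $(2\,\vert\,\ell_1)_p = 1$ and $(3\,\vert\,\ell_1)_p \neq 1$, and interchanging the roles of $2$ and $3$ yields $\ell_2$. Multiplying one such pair for each $p \mid r$ and correcting by a factor coprime to $30q$ to force $c_r \equiv 1 \pmod 6$ produces the desired $c_r$.

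With $c_r$ fixed, I would let $y_0 \in \Z$ be arbitrary and set $y := c_r(30(y_0 + q^2 m) - 1)$, exactly the quantity appearing in Proposition~\ref{deg4_lem3}, so that $y^r = 6n_m - 7$ for the integer $n_m$ of that proposition; write $n := n_m$. Because $30(y_0+q^2m) - 1 \equiv -1 \pmod{5}$ and $(5, c_r) = 1$, we have $5 \nmid y$, which is the hypothesis of Proposition~\ref{deg4_prop2}. Moreover each $\ell_i \mid c_r$ divides $c_r^{\,r} \mid y^r = 6n - 7$ and is different from $5$, so the $\ell_i$ are precisely the primes required by condition~(1) of Proposition~\ref{deg4_prop3}.

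Next I would apply Proposition~\ref{deg4_lem3} together with Hilbert's Irreducibility Theorem as in Section~2: since $g(X,Y)$ is irreducible in $\Q\lbrack X,Y \rbrack$ and $g(X,m) = p_{n_m,r}(X)$ for every $m \in \Z$, there are infinitely many $m$ for which $p_{n_m,r}(X)$ is irreducible in $\Q\lbrack X \rbrack$. Fixing such an $m$, Proposition~\ref{deg4_prop4} then shows that $w/\rho_1$ has no $p$th root in $K_n$ for any prime $p \mid r$, which is condition~(2) of Proposition~\ref{deg4_prop3}. All hypotheses of Proposition~\ref{deg4_prop3} now hold, giving $w\o_K = J^r$ with $\lbrack J \rbrack$ of order $r$, which is conclusion~(i). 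For conclusion~(ii) I would note that $\sigma^2(w) = (2+\rho_3)/(2+\rho_1) = 1/w$, so $N_{K \mid k_2}(w) = w\,\sigma^2(w) = 1$; taking norms in $w\o_K = J^r$ gives $N_{K \mid k_2}(J)^r = \o_{k_2}$, whence $N_{K \mid k_2}(J) = \o_{k_2}$ is principal.

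I expect the main obstacle to be the first step, namely verifying that the quartic analog of Lemma~\ref{deg6_lem4} really goes through: one must check that $L_2 \not\subset L_1$ so that Bauer's Theorem applies, and that the congruence $c_r \equiv 1 \pmod 6$ and the coprimality conditions can be met simultaneously with the splitting conditions. Once that lemma is in hand, everything downstream is a direct transcription of the sextic argument, made cleaner here by the absence of any $d$-factor: since no modulo-$4$ constraint on $n$ is required, we may take $y = c_r(30(y_0+q^2m)-1)$ outright, so that the index $n$ coincides with the $n_m$ furnished by Proposition~\ref{deg4_lem3} and the appeals to Propositions~\ref{deg4_prop2}--\ref{deg4_prop4} apply to one and the same field.
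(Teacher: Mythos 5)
Your proposal is correct and follows essentially the same route as the paper: the paper isolates your ``new step'' as Lemma~\ref{deg4_lem4}, proved exactly as you describe via Bauer's Theorem applied to $L_1=\Q(\zeta_p,2^{1/p})$ and $L_2=\Q(\zeta_{2p},3^{1/p})$ (the same fields up to the harmless replacement of $\zeta_{2p}$ by $\zeta_p$), and then runs the identical chain through Propositions~\ref{deg4_lem3}, \ref{deg4_prop4}, and \ref{deg4_prop3} with $y=c_r(30(y_0+q^2m)-1)$. Your explicit verifications that $5\nmid y$ and that $N_{K\mid k_2}(w)=w\,\sigma^2(w)=1$ are details the paper leaves implicit, but nothing in your argument diverges from its proof.
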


\noindent Our proof makes use of the following Lemma.

\begin{lemma}\label{deg4_lem4}
There is a constant $c_r \equiv 1 \pmod{6}$ with $(c_r,5q)=1$ 
and which is such that for each prime $p \mid r$ there are primes $\ell_i \mid c_r,$ 
$(i=1,2),$ for which
\begin{enumerate}
\item $(2\,\vert\,\ell_1)_p = 1$ and $(3\,\vert\,\ell_1)_p\neq 1$
\item $(3\,\vert\,\ell_2)_p = 1$ and $(2\,\vert\,\ell_2)_p\neq 1.$
\end{enumerate}
\end{lemma}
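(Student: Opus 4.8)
The plan is to mirror the proof of Lemma~\ref{deg6_lem4}, exploiting two simplifications afforded by the quartic setting: each $\ell_i$ now carries only two $p$th power residue constraints (involving $2$ and $3$) rather than three, and the requirement on $c_r$ is the much milder $c_r \equiv 1 \pmod{6}$ in place of $c_r^r \equiv 7^{1-r} \pmod{30}$. Since $r$ is odd, every prime $p \mid r$ is odd, so there are no parity subtleties and the symbol $(a\,\vert\,\ell)_p$ is well defined as soon as $\ell \equiv 1 \pmod{p}$.

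First I would fix a prime $p \mid r$ and produce $\ell_1$. Set $L_1 = \Q(\zeta_p, 2^{1/p})$ and $L_1' = \Q(\zeta_p, 3^{1/p})$, where $\zeta_p$ is a primitive $p$th root of unity. A prime $\ell$ splits completely in $L_1$ precisely when $\ell \equiv 1 \pmod{p}$ and $(2\,\vert\,\ell)_p = 1$, and it splits completely in $L_1'$ precisely when $\ell \equiv 1 \pmod{p}$ and $(3\,\vert\,\ell)_p = 1$. Because $2$ and $3$ are independent in $\Q^\times/(\Q^\times)^p$ (they are distinct primes), $L_1' \not\subseteq L_1$, so Bauer's Theorem furnishes infinitely many primes that split completely in $L_1$ but not in $L_1'$; each such prime satisfies $(2\,\vert\,\ell)_p = 1$ and $(3\,\vert\,\ell)_p \neq 1$. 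Using this infinitude I would select $\ell_1 = \ell_1(p)$ coprime to $30q$, securing $(\ell_1, 6) = (\ell_1, 5q) = 1$. Interchanging the roles of $2$ and $3$, the same argument with $L_2 = \Q(\zeta_p, 3^{1/p})$ and $L_2' = \Q(\zeta_p, 2^{1/p})$ produces $\ell_2 = \ell_2(p)$, also coprime to $30q$, with $(3\,\vert\,\ell_2)_p = 1$ and $(2\,\vert\,\ell_2)_p \neq 1$. I would repeat this for every prime $p \mid r$.

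Finally I would assemble $c_r$. Put $P := \prod_{p \mid r} \ell_1(p)\ell_2(p)$; by construction $P$ is coprime to $30q$ and in particular a unit modulo $6$. I would then choose an integer $s \equiv P^{-1} \pmod{6}$ that is coprime to $5q$ --- infinitely many exist, since deleting from this progression the sub-progressions of multiples of $5$ and of $q$ leaves infinitely many terms --- and set $c_r := sP$. Then $c_r \equiv 1 \pmod{6}$ and $(c_r, 5q) = 1$, while each $\ell_i(p)$ divides $c_r$ and carries the prescribed residue behavior, as required. The only point demanding any care is the non-inclusion $L_1' \not\subseteq L_1$ (and its mirror), which licenses the appeal to Bauer's Theorem; this rests on the elementary fact that $3$ is not a $p$th power times a power of $2$ in $\Q^\times$, i.e.\ that $2$ and $3$ are independent modulo $p$th powers. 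Everything else is the routine bookkeeping already performed in Lemma~\ref{deg6_lem4}, now lightened by the simpler target congruence.
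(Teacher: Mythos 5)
Your argument is correct and is essentially the paper's own proof: for each $p\mid r$ you apply Bauer's Theorem to the same pair of Kummer extensions (your $\Q(\zeta_p,3^{1/p})$ equals the paper's $\Q(\zeta_{2p},3^{1/p})$ since $p$ is odd) to harvest the primes $\ell_1(p),\ell_2(p)$ coprime to $30q$, and then set $c_r=sP$ with $s$ adjusting the class mod $6$. The only point worth tightening is that the non-inclusion feeding Bauer's Theorem should be certified by the independence of $2$ and $3$ in $\Q(\zeta_p)^\times/(\Q(\zeta_p)^\times)^p$ (checked, e.g., by valuations at primes above $3$), not merely in $\Q^\times/(\Q^\times)^p$; this is standard and matches the level of detail the paper itself supplies.
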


\begin{proof}
Let $L_1 = \Q(\zeta_p,2^{1/p})$ and $L_2 = \Q(\zeta_{2p},3^{1/p}),$ 
and then for each prime $p$ dividing $r$ use Bauer's Theorem as in the proof of Lemma 
\ref{deg6_lem4} to choose primes $\ell_1(p)$ (resp. $\ell_2(p)$) satisfying condition (1) (resp. 
condition (2)) of the lemma with each $\ell_i \nmid 30q.$
Then set $c_r = s\left(\prod_{p \mid r} \ell_1(p)\ell_2(p)\right),$ where 
$s$ is chosen to ensure that $c_r \equiv 1 \pmod{6}.$
\end{proof}

\begin{proof}[Proof of Proposition \ref{deg4_prop6}]
Choose $c_r$ according to Lemma \ref{deg4_lem4}, let $y_0 \in \Z,$ and 
let $y:= c_r(30(y_0+q^2m)-1).$
Then $y^r \equiv (-c_r)^r \equiv -1 \pmod{6}.$  
Hence $y^r = 6n-7$ for some $n \in \Z.$  According to Lemma \ref{deg4_lem4}, 
for each prime $p \mid r$ there are primes dividing $c_r,$ and 
hence dividing $6n-7,$ that satisfy condition (1) of Proposition \ref{deg4_prop3}. 
Since $c_r \equiv 1 \pmod{6},$ Proposition 
\ref{deg4_lem3} and the discussion of Hilbert's Irreducibility Theorem preceding it reveal that $p_{n_m,r}(X)$ 
is irreducible in $\Q\lbrack X \rbrack$ for infinitely many $m \in \Z.$  Choosing such a pair $(m,n_m),$ 
Proposition \ref{deg4_prop4} reveals that $w/\rho_1$ fails to have a $p$th root in $K_{n_m}$ for each 
prime $p \mid r.$  We see that 
for such a pair $(m,n_m)$ all of the hypotheses of Proposition \ref{deg4_prop3} are 
satisfied and we conclude that there is a field $K:=K_{n_m} \in \frak{F}$ for which 
$w\o_K = J^r$ where $\lbrack J \rbrack$ has order $r.$  Finally, since $J^r=w\o_K$ and since $N_{K \mid k_2}(w)=1,$ it is clear that $N_{K \mid k_2}(J) = \o_{k_2}.$
\end{proof}

\noindent At this point we have found a field $K \in \frak{F}$ that satisfies conditions 
$(i)$ and $(ii)$ of Proposition \ref{thm1}.  All that remains to establish this proposition is 
to show that $q$ ramifies in $k_2.$  This is accomplished through a judicious choice of $y_0$ in 
the following proof.

\begin{proof}[Proof of Proposition \ref{thm1}]
Since $\mathcal{P}(X)$ splits into linear factors$\pmod{q},$ we deduce that $(-16\,\vert\,q)_2=1.$  
Choose $n_0 \in \Z$ such that $n_0^2+16\equiv 0 \pmod{q}$ and $n_0^2+16 \not\equiv 0 \pmod{q^2}.$  
Use the fact that $\mathcal{P}(X)$ splits into linear factors $\pmod{q}$ to choose $b_0 \in \Z$ such that 
$b_0^r \equiv 6n_0-7 \pmod{q}.$  Note that $q \neq 5$ implies that $q \nmid b_0,$ since otherwise 
$q$ would be a prime common divisor of $6n_0-7$ and $n_0^2+16 \mid \disc(f_{n_0}(X)).$  Since 
$q \nmid b_0$ and since $q \nmid r,$ we conclude that we may lift $b_0$ to an $r$th root of 
$6n_0-7 \pmod{q^2}.$  Choose $c_r$ according to Lemma \ref{deg4_lem4} and note that 
since $q \nmid 30c_r,$ we may choose $y_0 \in \Z$ such that 
$b_0 \equiv c_r(30y_0-1) \pmod{q^2}.$\\
\\
Let $m \in \Z$ and let $y := c_r(30(y_0+q^2m)-1).$  
Then $y^r \equiv b_0^r \equiv 6n_0-7 \pmod{q^2}.$  Since 
$y^r \equiv -1 \pmod{6},$ we have $y^r=6n_m-7,$ for some $n_m \in \Z.$  
Since $n_m \equiv n_0 \pmod{q^2},$ we deduce 
that $n_m^2+108 \equiv 0 \pmod{q}$ and $n_m^2+108 \not\equiv 0 \pmod{q^2}.$  This implies that 
$q$ ramifies in $k_2 = \Q(\sqrt{n_m^2+16}) \subset K_{n_m},$ since $q$ divides the square-free part of 
$\disc(X^2-(n_m^2+16))$ and so divides $\disc(\o_{k_2}/\Z).$\\
\\
We conclude that for each $m \in \Z,$ the prime $q$ ramifies in $K_{n_m}.$  
To prove that one of these fields satisfies conditions 
$(i)$ and $(ii)$ of Proposition \ref{thm1}, one may 
copy the proof of Proposition \ref{deg4_prop6} verbatim beginning with the fourth sentence.
\end{proof}

% Non-Galois Cubics
\section{A family of non-Galois cubic number fields.}

\noindent Let $K_n=\Q(\rho),$ where $\rho$
is a root of 
\begin{equation}
f_n(X) := X^3+nX^2+nX-1.
\end{equation}
Since $f_n(\rho)=0$ if and only if $f_{-n}(1/\rho)=0,$ we have that $K_n = K_{-n}$ and so we 
restrict our attention to $n\ge 0.$  When $n>0,$ we find 
that $f_n(\pm 1) \neq 0$ and so conclude that $f_n(X)$ is irreducible in $\Q\lbrack X \rbrack$ 
(since $\pm 1$ are the only possible zeros of $f_n(X)$ in $\Q$).  Since this clearly doesn't hold when 
$n=0,$ we restrict our attention to the cubic extensions $K_n/\Q$ corresponding to $n > 0.$  
When $n \ge 5,$ $f_n(X)$ changes sign in the intervals $(1-n,2-n),$ $(-1-4/n,-1-1/n),$ and 
$(1/(n+1),1/n).$  In particular, $f_n(X)$ has real 
zeros, two of which are $< -1,$ while the third is $>0.$  When $\vert n \vert < 5,$ 
$f_n(X)$ has some complex zeros and so we restrict our attention to the real cubic extensions 
$K_n/\Q$ corresponding to $n \ge 5.$  
Note that $\disc(f_n(X))=n^4-18n^2-27 = (n^2-9)^2-108.$  Since $108$ is a difference 
of two squares in only two ways and since neither $12$ or $28$ is of the form $n^2-9,$ we may conclude that $\disc(f_n(X))$ is not a square 
in $\Q,$ and hence that the Galois group of $f_n(X)$ is $S_3.$  
Thus $K_n/\Q$ is a non-Galois real cubic extension.
We place one last restriction on the parameter $n$; we restrict $n$ to have odd parity. 
This ensures the existence of the explicit unit $\mu \in \o_K^\times$, distinct from $\rho$, and 
introduced in Lemma \ref{ng_lem3} below.\\
\\
Let $\frak{F} := \{K_n \mid n \textmd{ is an odd integer } \ge 5\}.$
Let $\mathcal{I}_K$ denote the group of fractional ideals of $K\in \frak{F}$  
and for a fractional ideal $J \in \mathcal{I}_K,$ let $\lbrack J \rbrack$ denote its ideal class.  Let 
$h_K$ denote the class number of $K.$
Our primary result is
\begin{theorem}\label{ng_thm1}
Let $r \not \equiv 0 \pmod{3}$ be a positive integer.   There are infinitely many fields $K \in \frak{F}$ containing an ideal class of order $r.$
\end{theorem}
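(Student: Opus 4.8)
The plan is to run the machinery of Sections~2 and~3 in the non-Galois setting, using technique~B of the introduction in place of the Galois factorization~(\ref{intro_eqn2}), and then to produce infinitely many fields not by a ramification argument (as in the proofs of Theorems~\ref{deg6_thm1} and~\ref{thm2}, which exploited a quadratic subfield $k_2 \subset K$) but by a class-number growth argument, since $K_n/\Q$ has no proper subfield other than $\Q$. Concretely, set $R := r^k$; as $(r,3)=1$ we have $(R,3)=1$, so every degree-coprimality hypothesis stays in force. Because each $K \in \frak{F}$ has finite class number, a field containing an ideal class of order $R$ satisfies $h_K \ge R = r^k$, so any fixed field can occur for only finitely many $k$; hence the fields produced as $k \to \infty$ are infinite in number, and since each carries a class $[J]$ of order $r^k$, the power $[J]^{r^{k-1}}$ has order exactly $r$, which is all the theorem requires. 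The non-injectivity of $n \mapsto K_n$ is thereby rendered harmless, as class-number growth is intrinsic to the field. It therefore suffices, for arbitrary $R$ with $(R,3)=1$, to exhibit one $K_n \in \frak{F}$ carrying an ideal class of order $R$.

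To build such a field I would fix an integer $x$ (for instance $x=2$, so that $f_n(x)=6n+7$ is an odd linear form in $n$) and seek $y \in \Z$ with $y^R = f_n(x)$; solving for $n$ and imposing congruences on $y$ secures infinitely many odd $n \ge 5$. First I would prove the cubic analogue of Lemma~\ref{deg4_lem1}, namely that a prime dividing both $f_n(x)$ and $\disc(f_n)=n^4-18n^2-27$ lies in a fixed finite set of bad primes; requiring those primes not to divide $y$ then forces $p \nmid \disc(f_n)$ for every $p \mid y$. This is exactly what technique~B needs. Writing $f_n(X)=(X-\rho)q(X)$ with $q(X) \in \o_{K_n}[X]$, for each prime ideal $\frak{P} \mid (x-\rho)\o_{K_n}$ one has $\rho \equiv x \pmod{\frak{P}}$, whence differentiating gives $q(x) \equiv f_n'(x) \pmod{\frak{P}}$; since $\frak{P}$ lies over an unramified prime $p \mid y$ with $p \nmid \disc(f_n)$, the reduction of $\rho$ is a simple root of the reduced polynomial and so $q(x) \not\equiv 0 \pmod{\frak{P}}$. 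Consequently $v_{\frak{P}}(x-\rho)=R\,v_p(y)$ at every such $\frak{P}$, and $(x-\rho)\o_{K_n}=J^R$ for some $J \in \mathcal{I}_{K_n}$. This is the analogue of Proposition~\ref{deg4_prop2}.

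It remains to force $[J]$ to have order exactly $R$; its order divides $R$ automatically, and a proper divisor would make $(x-\rho)\o_{K_n}$ the $p$th power of a principal ideal for some prime $p \mid r$, giving $x-\rho=uz^p$ with $z \in K_n^\times$ and $u \in \o_{K_n}^\times$. Here $\rk \o_{K_n}^\times = 2$, and I would take $U:=\langle -1, \rho, \mu \rangle$ with $\mu$ the explicit unit of Lemma~\ref{ng_lem3}; a regulator computation shows $\{\rho,\mu\}$ is multiplicatively independent, so $[\o_{K_n}^\times : U] < \infty$. The essential complication relative to Sections~2 and~3 is that $K_n$ has \emph{no} proper subfield other than $\Q$, so no relative norm is available to annihilate part of $u$; instead the exponents of $u=\pm\rho^a\mu^b$ (with $0 \le a,b < p$) and the sign must be pinned down entirely by $p$th-power residue conditions. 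I would therefore prove, in the spirit of the index lemmas of the previous sections and using Bauer's Theorem as in Lemma~\ref{deg4_lem4}, that there are auxiliary primes $\ell_i \mid f_n(x)$ (not bad) with prescribed residue behaviour of $2,3,\dots$ guaranteeing both $p \nmid [\o_{K_n}^\times : U]$ and, after reducing $x-\rho$, $\rho$, and $\mu$ modulo primes $\mathcal{L}_i \mid \ell_i\o_{K_n}$ with $\rho \equiv x$, that $a,b$ and the sign are forced to the unique values making $\beta := (x-\rho)u^{-1}$ a $p$th power. This contradicts the standing hypothesis that this $\beta$ has no $p$th root in $K_n$. One must here allow $p=2$, since $r$ need only be prime to $3$: the sign of $u$ cannot be discarded, and the residue conditions at $p=2$ are ordinary Legendre symbols.

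Finally, the ``no $p$th root'' hypothesis would be secured exactly as in Propositions~\ref{deg4_prop4} and~\ref{deg4_lem3}. A $p$th root of $\beta$ for some $p \mid R$ makes $X^R-\beta$ reducible over $K_n$; since $\beta \not\in \Q$ (checked via $N_{K_n\mid\Q}(\beta)$, $\beta$ being a primitive element because $K_n/\Q$ has prime degree) its minimal polynomial $m(X)$ has degree $3$, so $m(X^R)$, and hence a suitable integer multiple $p_{n,R}(X)$ of it, factors over $\Q$. Writing $p_{n,R}(X)=g(X,m)$ for an irreducible $g(X,Y) \in \Z[X,Y]$ and invoking Hilbert's Irreducibility Theorem then yields infinitely many $n$ for which $p_{n,R}(X)$ is irreducible and $\beta$ has no $p$th root. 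I expect this to be the main obstacle, just as in Sections~2 and~3: proving $g$ irreducible and, above all, establishing irreducibility of a convenient specialization (the analogue of Lemma~\ref{deg4_lem2}, via an Eisenstein or PARI-assisted splitting argument over the specialized field together with its explicit unit structure) is delicate, and it must now be carried out uniformly in the exponent $R=r^k$.
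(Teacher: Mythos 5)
Your global architecture is exactly the paper's: Proposition \ref{ng_prop1} produces, for each $R=r^k$ coprime to $3$, one field with a class of order $R$, and the bound $h_K \ge R$ then forces infinitely many distinct fields (the paper phrases this as a maximal-exponent contradiction, but the content is the same, and it is precisely the argument the paper says it reserves for this family in the remark after Theorem \ref{deg6_thm1}). Your treatment of Criterion (1) via technique B also matches: the only primes dividing both $f_n(2)=6n+7$ and $\disc(f_n)$ are $37$ and $47$ (Lemma \ref{ng_lem6}), and excluding them from $y$ gives $(2-\rho)\o_K=J^R$ (Proposition \ref{ng_prop2}). The unit group $U=\langle -1,\rho,\mu\rangle$ and the appeal to Bauer's theorem likewise agree with the paper.

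The gap is in the step where you pin down $u=\pm\rho^a\mu^b$ ``after reducing $x-\rho$, $\rho$, and $\mu$ modulo primes $\mathcal{L}_i\mid\ell_i\o_{K_n}$ with $\rho\equiv x$.'' Since $\ell_i\mid f_n(x)$ and $\rho\equiv x\pmod{\mathcal{L}_i}$, the element $x-\rho$ lies in $\mathcal{L}_i$, and $z^p$ has the same positive $\mathcal{L}_i$-valuation; the congruence $x-\rho\equiv\pm\rho^a\mu^b z^p$ therefore reduces to $0\equiv 0$ and yields no condition on $a,b$. In the Galois sections this problem never arises because $w$ is assembled from the conjugate factors $(x-\rho_j)$ with $j\neq 0$, which are units modulo the chosen prime. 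In the non-Galois setting the paper's essential device is Lemma \ref{ng_lem1}: the cofactor $(2-\rho_1)(2-\rho_2)=\rho^2+(2+n)\rho+(4+3n)$ lies in $\o_{K_n}$, and applying the relation $2-\rho_i=\pm\rho_i^a\mu_i^b\beta_i^p$ in the two conjugate fields and multiplying gives $(2-\rho_1)(2-\rho_2)=\rho^{-a}\mu^{-b}(\beta_1\beta_2)^p$ with $\beta_1\beta_2\in K_n$ and with left-hand side $\equiv 5n+12\equiv 37\cdot 6^{-1}\not\equiv 0\pmod{\mathcal{L}_i}$ (Lemma \ref{ng_lem2}); this is exactly how the prime $37$ enters the residue conditions of Proposition \ref{ng_prop3}. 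Without this (or an equivalent) device your congruences say nothing. A second, lesser point: the exponents forced by the resulting congruences are $3b\equiv 1$ and $a\equiv 2b+1\pmod p$, which depend on $p$, so the element whose $p$th-root-freeness you would attack with Hilbert irreducibility varies with $p$ and is not the single $\beta$ of your sketch; the paper sidesteps this entirely by imposing a third prime $\ell_3$ with $(37\,\vert\,\ell_3)_p\neq 1$ and deriving a contradiction purely from congruences, so that no Hilbert-type irreducibility argument is needed in Section 4 at all.
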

\noindent We do not know whether or not the parametrization 
of $\frak{F}$ given by $n \mapsto K_n$ is injective and so in proving Theorem 
\ref{ng_thm1} we do not attempt to show that there are infinitely many $n$ such that $K_n$ contains an 
ideal class of order $r.$  Instead we put most of our effort into first proving 
\begin{proposition}\label{ng_prop1}
Let $r \not \equiv 0 \pmod{3}$ be an integer $>1.$  Then there is a field $K \in \frak{F}$ and an ideal $J \in \mathcal{I}_K$ whose 
ideal class, $\lbrack J \rbrack,$ has order $r.$
\end{proposition}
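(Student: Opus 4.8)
The plan is to run the Technique-B analogue of the machinery of Sections 2 and 3. Because $[K_n:\Q]=3$ is prime and $K_n/\Q$ is non-Galois, the only proper subfield of $K_n$ is $\Q$; consequently every nontrivial class is automatically ``new'' and no relative-norm conditions are needed---we must simply exhibit one field in $\frak{F}$ carrying an ideal class of order exactly $r$. Fix a convenient integer $a$ (for instance $a=2$, so that $N_{K_n/\Q}(a-\rho)=f_n(a)=6n+7$ is odd and grows with $n$) and set $w:=a-\rho\in\o_{K_n}$. First I would prove the analogue of Proposition \ref{deg6_prop2}: if $y\in\Z$ is coprime to the finite set $B$ of primes dividing $\mathrm{Res}_n\!\big(f_n(a),\disc f_n\big)$ and $y^r=f_n(a)$, then $w\o_{K_n}=J^r$. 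For each prime $p\mid y$ one has $p\nmid\disc f_n$ (this is the role of Lemma \ref{deg6_lem1}), so $f_n$ is separable mod $p$; hence $a\bmod p$ is a simple root lying in $\F_p$, it is cut out by a unique degree-one prime $\frak{P}_0\mid p$ with $\frak{P}_0\mid(a-\rho)$, and since $\frak{P}_0$ is the only prime over $p$ dividing $a-\rho$ we get $\nu_{\frak{P}_0}(a-\rho)=\nu_p\big(N(a-\rho)\big)=\nu_p(y^r)=r\,\nu_p(y)$. Collecting these exponents exhibits the $r$th-power ideal $J$, exactly as in Technique B of the introduction.

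This already gives that $[J]$ has order dividing $r$. To force the order to be exactly $r$ I would imitate Proposition \ref{deg6_prop3}. Since $K_n$ is totally real cubic, $\rk\o_{K_n}^\times=2$; take $U:=\langle-1,\rho,\mu\rangle$, where $\mu$ is the explicit second unit promised in Lemma \ref{ng_lem3}, and first verify by a nonvanishing-regulator computation that $\{\rho,\mu\}$ is multiplicatively independent, so $[\o_{K_n}^\times:U]<\infty$. If $[J]$ had order $<r$, then for some prime $p\mid r$ the ideal $J^{r/p}$ would be principal, whence $w=u\,z^p$ for some $u\in\o_{K_n}^\times$ and $z\in K_n^\times$. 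Power-residue conditions on two suitable primes $\ell_1,\ell_2\mid f_n(a)$ (produced, as in Lemma \ref{deg6_lem4}, by applying Bauer's theorem to Kummer extensions $\Q(\zeta_{p},t^{1/p})$) are used twice: once to guarantee $p\nmid[\o_{K_n}^\times:U]$, so that we may take $u=\pm\rho^b\mu^c$ with $0\le b,c<p$, and once to pin down $b,c$ and thereby conclude that $w/(\pm\rho^b\mu^c)$ is itself a $p$th power in $K_n$. Note that norms down to $\Q$ are useless here, since $\rho$ and $\mu$ are both units; unlike the Galois sections, where relative norms killed the subfield-unit exponents, \emph{both} exponents $b,c$ must be eliminated by congruences.

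It then remains, as in Proposition \ref{deg6_prop4}, to rule this out: if $w/(\pm\rho^b\mu^c)$ has a $p$th root then, with $m(X)$ its degree-$3$ minimal polynomial over $\Q$ (it is a primitive element, being non-rational), the substituted polynomial $m(X^r)$ factors nontrivially, i.e. an explicit cleared-denominator form $p_{n,r}(X)$ of $m(X^r)$ is reducible over $\Q$. I would construct an irreducible $g(X,Y)\in\Z[X,Y]$ with $g(X,m)=p_{n_m,r}(X)$ for a suitable integer sequence $n_m$, checking irreducibility, as in Lemmas \ref{deg6_lem2} and \ref{deg4_lem2}, by specializing to a value where $g$ collapses to $h(X^r)$ for a concrete irreducible $h$ and applying the $X^r-v$ criterion \cite[Ch.~VI,\S9,Thm.~9.1]{lang}. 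Hilbert's Irreducibility Theorem \cite{hilbert} then supplies infinitely many $m$ with $p_{n_m,r}(X)$ irreducible; fixing such an $m$ (and simultaneously arranging, through a constant $c_r$ as in Lemma \ref{deg6_lem4}, that $y$ avoids $B$ and that the required residue primes $\ell_1,\ell_2$ exist) yields a field $K=K_{n_m}\in\frak{F}$ in which $w\o_K=J^r$ with $[J]$ of order exactly $r$.

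The main obstacle is the order-exactly-$r$ step, and within it the power-residue bookkeeping. In the Galois sections the tested element $w$ was a ratio of conjugate relative units, hence a \emph{unit} that is nonzero modulo every prime over $\ell$ and reduces to a clean, $n$-independent residue; here $w=a-\rho$ is not a unit and vanishes modulo the distinguished prime $\frak{P}_0\mid(a-\rho)$, so the residues must be read off at the remaining primes over $\ell$ (equivalently, in the Galois closure of $K_n$), where they are governed by the Vieta relations among the roots of $f_n\bmod\ell$ and are therefore $n$-dependent. Arranging the power-residue conditions so that $b$ and $c$ are nonetheless determined, while keeping the companion irreducibility hypothesis on $p_{n,r}(X)$ satisfiable via Hilbert, is the delicate part; the remainder is a routine transcription of the Galois-case arguments.
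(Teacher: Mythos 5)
Your overall architecture (the Technique--B factorization $w\o_{K_n}=J^r$, the unit subgroup $U=\langle-1,\rho,\mu\rangle$ of finite index, Bauer's theorem to manufacture auxiliary primes, and the correct observation that norms to $\Q$ kill nothing so both exponents $b,c$ must be handled by congruences) matches the paper, and your first step is sound. But there is a genuine gap at exactly the point you flag as ``the delicate part,'' and your proposed escape route is not the one that works. The relation $w=uz^p$ reduces to $0\equiv 0$ modulo the distinguished degree-one prime $\mathcal{L}\mid(2-\rho)$, and the conjugate residues live a priori in the Galois closure and depend on $n$; you leave unresolved how the congruences actually determine $b$ and $c$. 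The paper's resolution is to multiply the conjugate relations $2-\rho_i=\pm\rho_i^{a}\mu_i^{b}\beta_i^{p}$ for $i=1,2$ to get the element $(2-\rho_1)(2-\rho_2)=\rho^{2}+(2+n)\rho+(4+3n)$ of $K_n$ itself, which reduces modulo $\mathcal{L}$ (where $\rho\equiv 2$) to $5n+12\equiv 37/6$, using $6n+7\equiv 0\pmod{\ell}$. The $n$-dependence thus collapses onto the fixed prime $37$, and a \emph{third} auxiliary prime $\ell_3$ with $(37\,\vert\,\ell_3)_p\neq 1$ delivers an outright contradiction (Proposition \ref{ng_prop3}); no exponent pair survives.

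Consequently the entire second half of your plan --- pinning $b,c$ to specific values, forming the minimal polynomial of $w\rho^{-b}\mu^{-c}$, building $g(X,Y)$, and invoking Hilbert's Irreducibility Theorem --- is both unexecuted (it cannot start until $b,c$ are determined, which is the unsolved step) and unnecessary: Section 4 of the paper uses no irreducibility argument at all. The final assembly is correspondingly much simpler than yours: one takes $y$ to be $s$ times the product of the chosen split primes, adjusts $s$ and possibly squares $y$ so that $y^r\equiv 1\pmod 4$ and $y^r\equiv 7\pmod 6$, and reads off $n$ from $6n+7=y^r$ (the congruence mod $4$ forces $n$ odd, which you also need for $\mu=\frac{n+3}{2}(1+\rho)+\rho^2$ to be integral, and $y\ge 125$ forces $n\ge 5$). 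To repair your proof you would need either to supply the $(2-\rho_1)(2-\rho_2)$ computation above, or to genuinely carry out the Hilbert route, including showing that the congruences single out a unique residual unit combination --- neither of which your sketch does.
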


\noindent Once Proposition \ref{ng_prop1} is established we have the following 

\begin{proof}[Proof of Theorem \ref{ng_thm1}]
Since every class group contains the trivial class, the Theorem will hold 
for $r=1$ if it holds for some $r>1,$ so assume $r>1.$  The proposition tells us that the set 
$S$ consisting of all $K \in \frak{F}$ containing an ideal class of order $r$ is nonempty.  
If $S$ was a finite set, then there would be a natural number $\alpha \ge 1$ and a field $K \in S$ with 
class number $h_K$ satisfying $r^\alpha \mid h_K$ and $r^{\alpha+1} \nmid h_{K^\prime}$ for each 
$K^\prime \in S.$  By the proposition there is a field $L \in \frak{F}$ and an ideal $J$ of $L$ whose class 
has order $r^{\alpha+1}.$  Hence $L$ also contains a class of order $r,$ but since $L \not\in S$ we've reached 
a contradiction.  We conclude that $S$ is infinite.
\end{proof}

\noindent To establish Proposition \ref{ng_prop1}, we begin by establishing sufficient conditions for the 
ideal $(2-\rho)\o_K \in \mathcal{I}_K$ to factor as an $r$th power 
of an ideal $J \in \mathcal{I}_K.$  This is accomplished in Proposition \ref{ng_prop2} and 
$\lbrack J \rbrack,$ when $J$ exists, serves as our candidate for an ideal class of order $r.$  
Note that in the previous two sections we accomplished this task with the help of the Galois group, 
$G,$ of our extensions.  For if $y^r=f_n(m)=\prod_{i=0}^d(m-\rho_i),$ where $d=\deg(f),$ then 
$G$ acts transitively on factors $(m-\rho_i)$ and \emph{takes prime ideals of $K$ to prime ideals of $K.$}  
Since our cubic extension $K/\Q$ is non-Galois, this technique is no longer available to us.  Indeed, 
if $\sigma \in \gal(L/\Q)\smallsetminus \gal(L/K),$ where $L$ is the Galois closure of $K/\Q,$ 
then $\sigma \vert_K$ carries a prime ideal of $K$ to a prime ideal of an isomorphic but \emph{distinct} 
cubic extension $K^\prime/\Q.$  At its core, however, this technique requires us to show that 
a prime ideal $\frak{P}$ dividing $(m-\rho_0)\o_K$ doesn't also divide 
$\prod_{i=1}^d(m-\rho_i)\o_K,$ because this ensures that if $\nu(\frak{P})$ is the exact power of $\frak{P}$ 
dividing $(m-\rho_0),$ then it is also the exact power of this ideal dividing $y^r=f_n(m)$ and hence that $r \mid \nu(\frak{P}).$
This sufficiently motivates the following pair of lemmas building up to Proposition \ref{ng_prop2} and in regard to these we add that 
\begin{equation}
6n+7 = f_n(2)=\prod_{i=0}^2(2-\rho_i).
\end{equation}

\begin{lemma}\label{ng_lem1}
Let $\rho_i$ $(i=0,1,2)$ denote the roots of $f_n(X)$ with $\rho := \rho_0.$  
Then $$
(2-\rho_1)(2-\rho_2)=\rho^2+(2+n)\rho+(4+3n) \in \o_{K_n}.
$$
\end{lemma}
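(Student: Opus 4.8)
The plan is to compute $(2-\rho_1)(2-\rho_2)$ directly from Vieta's formulas for $f_n(X)=X^3+nX^2+nX-1$ together with the defining relation $f_n(\rho)=0$. Writing $\rho:=\rho_0$, the elementary symmetric functions of the three roots give $\rho_0+\rho_1+\rho_2=-n$, $\rho_0\rho_1+\rho_0\rho_2+\rho_1\rho_2=n$, and $\rho_0\rho_1\rho_2=1$. From the first of these I read off $\rho_1+\rho_2=-n-\rho$, and from the last I read off $\rho_1\rho_2=1/\rho$.

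First I would expand $(2-\rho_1)(2-\rho_2)=4-2(\rho_1+\rho_2)+\rho_1\rho_2$ and substitute the two expressions just obtained, which yields $4+2n+2\rho+1/\rho$. The only non-polynomial term is $1/\rho$, and eliminating it is the one place where any care is warranted: dividing the relation $\rho^3+n\rho^2+n\rho-1=0$ through by $\rho$ (legitimate since $\rho\neq 0$, as $f_n(0)=-1\neq 0$) gives $1/\rho=\rho^2+n\rho+n$.

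Substituting this back collapses the expression to a polynomial in $\rho$ with integer coefficients, namely $\rho^2+(2+n)\rho+(4+3n)$, which is exactly the claimed identity. Finally, since $\rho$ is a root of the monic polynomial $f_n\in\Z[X]$ it is an algebraic integer, so any polynomial in $\rho$ with coefficients in $\Z$ again lies in $\o_{K_n}$; this establishes the asserted membership $(2-\rho_1)(2-\rho_2)\in\o_{K_n}$. There is no genuine obstacle here---the computation is entirely routine---so the only thing to keep in view is that $1/\rho$ must be rewritten via the minimal-polynomial relation rather than left as a quotient, which is precisely what converts the symmetric-function answer into the advertised $\Z$-polynomial in $\rho$.
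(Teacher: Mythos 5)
Your proof is correct and follows essentially the same route as the paper, which simply cites the Vieta relations $\sum\rho_i=-n$ and $\prod\rho_i=1$ and leaves the expansion and the elimination of $1/\rho$ via $f_n(\rho)=0$ implicit. You have merely written out the computation the paper omits.
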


\begin{proof}
Since $f_n(X) = X^3+nX^2+nX-1,$ we have that $\sum_{i=0}^2 \rho_i = -n$ and 
$\prod_{i=0}^2 \rho_i = 1.$
\end{proof}

\begin{lemma}\label{ng_lem2}
If a prime $p \mid (6n+7)$ and $\frak{P} \mid p\o_K$ is a prime common divisor of $(2-\rho)\o_K$ and 
$(2-\rho_1)(2-\rho_2)\o_K,$ then $p=37.$
\end{lemma}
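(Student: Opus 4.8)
The plan is to use the congruence $\rho \equiv 2 \pmod{\frak{P}}$ forced by the hypothesis $\frak{P} \mid (2-\rho)\o_K$ to reduce the algebraic integer $(2-\rho_1)(2-\rho_2)$ modulo $\frak{P}$ to an ordinary rational integer, and then to combine the resulting divisibility with the hypothesis $p \mid 6n+7$ via a linear combination that eliminates $n$ and pins down $p$.

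First I would invoke Lemma \ref{ng_lem1} to rewrite the symmetric product as $(2-\rho_1)(2-\rho_2) = \rho^2+(2+n)\rho+(4+3n) \in \o_{K_n}$. Since $\frak{P}$ divides $(2-\rho)\o_K$ we have $\rho \equiv 2 \pmod{\frak{P}}$, so substituting gives
$$
(2-\rho_1)(2-\rho_2) \equiv 4 + 2(2+n) + (4+3n) = 5n+12 \pmod{\frak{P}}.
$$
Because $\frak{P}$ is assumed to divide $(2-\rho_1)(2-\rho_2)\o_K$, this shows $5n+12 \equiv 0 \pmod{\frak{P}}$. As $5n+12$ is a rational integer and $\frak{P} \cap \Z = p\Z$, I conclude that $p \mid 5n+12$.

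At this stage I have two integer divisibilities in hand: $p \mid 6n+7$ by hypothesis and $p \mid 5n+12$ from the computation above. Taking the linear combination $6(5n+12)-5(6n+7) = 37$ eliminates $n$ and yields $p \mid 37$; since $37$ is prime, $p = 37$, which is exactly the assertion.

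There is no serious obstacle here. The only point requiring care is the clean reduction of the quantity $(2-\rho_1)(2-\rho_2)$ — which a priori involves the conjugates $\rho_1,\rho_2$ that need not lie in $K_n$ — to a polynomial in $\rho$ alone, and this is precisely what Lemma \ref{ng_lem1} supplies (ultimately resting on $\sum_i \rho_i = -n$ and $\prod_i \rho_i = 1$). Once the expression becomes rational-integral after reduction modulo $\frak{P}$, the descent to a divisibility among ordinary integers and the closing $\gcd$ computation are immediate.
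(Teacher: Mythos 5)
Your argument is correct and is essentially the paper's own proof: the paper uses the identity $5n+12 = (2-\rho_1)(2-\rho_2) + (2-\rho)(\rho+4+n)$, which is exactly your reduction of Lemma \ref{ng_lem1}'s expression modulo $\rho \equiv 2 \pmod{\frak{P}}$, and then concludes $p=37$ from the two congruences just as you do with the combination $6(5n+12)-5(6n+7)=37$. No discrepancies.
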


\begin{proof}
Applying Lemma \ref{ng_lem1}, we find that $5n+12 = (2-\rho_1)(2-\rho_2) + (2-\rho)(\rho+4+n) \in \frak{P}.$  
We thus have that $5n \equiv -12 \pmod{p}$ and $6n \equiv -7 \pmod{p}.$  These hold simultaneously only if 
$p=37.$
\end{proof}

\begin{proposition}\label{ng_prop2}
If $37 \nmid (6n+7)$ and $6n+7=y^r$ for some $y \in \Z,$ then there is an ideal $J \in \mathcal{I}_K$ such that 
$(2-\rho)\o_K=J^r.$
\end{proposition}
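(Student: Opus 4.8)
The plan is to mimic the $\frak{P}$-adic valuation argument of Proposition~\ref{deg6_prop2}, with the Galois transitivity used there replaced by the coprimality supplied by Lemma~\ref{ng_lem2}. First I would record that, since $f_n$ is monic with roots $\rho_0,\rho_1,\rho_2$,
\[
N_{K_n\mid\Q}(2-\rho)=\prod_{i=0}^{2}(2-\rho_i)=f_n(2)=6n+7=y^r .
\]
Because $6n+7=(2-\rho)\,(2-\rho_1)(2-\rho_2)$ with the cofactor lying in $\o_K$ by Lemma~\ref{ng_lem1}, every prime ideal $\frak{P}$ of $\o_K$ dividing $(2-\rho)\o_K$ also divides $(6n+7)\o_K$; hence the rational prime $p$ below $\frak{P}$ satisfies $p\mid 6n+7$, and therefore $p\mid y$.

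Next I would fix such a $\frak{P}\mid(2-\rho)\o_K$ and show that it is coprime to the cofactor. Since $p\mid 6n+7$ while $37\nmid 6n+7$, we have $p\neq 37$, so Lemma~\ref{ng_lem2} forbids $\frak{P}$ from simultaneously dividing $(2-\rho)\o_K$ and $(2-\rho_1)(2-\rho_2)\o_K$. As it divides the former, it cannot divide the latter, giving $v_{\frak{P}}\big((2-\rho_1)(2-\rho_2)\big)=0$, where $v_{\frak{P}}$ denotes the $\frak{P}$-adic valuation.

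I would then read off the valuation of $2-\rho$. The element identity $(2-\rho)\,(2-\rho_1)(2-\rho_2)=y^r$ in $\o_K$ yields
\[
v_{\frak{P}}(2-\rho)=v_{\frak{P}}(y^r)-v_{\frak{P}}\big((2-\rho_1)(2-\rho_2)\big)=v_{\frak{P}}(y^r).
\]
As $y\in\Z$, we have $v_{\frak{P}}(y^r)=r\,v_{\frak{P}}(y)$, which is divisible by $r$. Thus $r\mid v_{\frak{P}}(2-\rho)$ for every prime ideal $\frak{P}$ dividing $(2-\rho)\o_K$, and setting
\[
J:=\prod_{\frak{P}\mid(2-\rho)\o_K}\frak{P}^{\,v_{\frak{P}}(2-\rho)/r}
\]
produces an ideal $J\in\mathcal{I}_K$ with $(2-\rho)\o_K=J^r$.

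The bookkeeping aside, the only genuinely delicate point is the coprimality of $(2-\rho)\o_K$ and the cofactor $(2-\rho_1)(2-\rho_2)\o_K$: this is precisely where the non-Galois setting costs us, since no Galois automorphism is available to distribute the conjugate factors among distinct prime ideals. That difficulty has, however, already been isolated and dispatched by Lemmas~\ref{ng_lem1} and~\ref{ng_lem2}, so the argument above amounts to a clean valuation count once those lemmas are in hand.
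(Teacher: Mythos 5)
Your proposal is correct and follows essentially the same route as the paper's proof: identify the rational prime below each $\frak{P}\mid(2-\rho)\o_K$ as a divisor of $6n+7$, invoke Lemma~\ref{ng_lem2} (with $37\nmid 6n+7$) to see that $\frak{P}$ cannot divide the cofactor $(2-\rho_1)(2-\rho_2)\o_K$, conclude that $v_{\frak{P}}(2-\rho)=v_{\frak{P}}(y^r)$ is divisible by $r$, and extract the $r$th root ideal $J$. The only difference is cosmetic (explicit valuation notation versus the paper's ``exact power'' phrasing), so nothing further is needed.
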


\begin{proof}
Let $\frak{P}$ be a prime ideal of $\o_K$ with $\frak{P}\cap \Z := p\Z$ and with $\frak{P} \mid (2-\rho)\o_K.$  
Then $6n+7 = f_n(2) = \prod_{i=0}^2 (2-\rho_i) \in \frak{P}.$  Hence $p \mid 6n+7$ and since 
$37 \nmid 6n+7,$ we find that $\frak{P} \nmid (2-\rho_1)(2-\rho_2)\o_K.$  Let 
$\frak{P}^{\nu_\frak{P}}$ denote the exact power of $\frak{P}$ dividing $(2-\rho)\o_K.$  
Then it must also be the exact 
power of $\frak{P}$ dividing $y^r\o_K = (6n+7)\o_K = \prod_{i=0}^2 (2-\rho_i)\o_K.$  Hence $r \mid \nu_\frak{P}.$  
We now set 
$$
J:= \prod_{\substack{\textmd{prime ideals } \\ \frak{P} \mid (2-\rho)\o_K}}\frak{P}^{\nu_\frak{P}/r},
$$
and note that $J^r = (2-\rho)\o_K.$
\end{proof}

\noindent In Proposition \ref{ng_prop3} we add sufficient conditions to those of 
Proposition \ref{ng_prop2} to ensure that $\lbrack J \rbrack$ has order $r.$  We turn first to a sequence of 
lemmas that we use in our proof of Proposition \ref{ng_prop3}.  We gently remind the reader that 
$n$ is odd.

\begin{lemma} \label{ng_lem3}
The algebraic integer
$$
\mu := \left(\frac{n+3}{2}\right)(1+\rho)+\rho^2
$$
is a unit in $\o_K$ with $N_{K \mid \Q}(\mu) = 1.$
\end{lemma}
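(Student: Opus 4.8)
The plan is to establish the lemma in two stages: first that $\mu$ lies in $\o_K$, and then that $N_{K\mid\Q}(\mu)=1$; since a unit of $\o_K$ is precisely an element of $\o_K$ whose norm is $\pm 1$, these two facts together give the result.

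First I would observe that the hypothesis that $n$ is odd is exactly what makes $\tfrac{n+3}{2}$ an integer. Consequently $\mu = \tfrac{n+3}{2} + \tfrac{n+3}{2}\,\rho + \rho^2$ is a $\Z$-linear combination of $1,\rho,\rho^2$, each of which is an algebraic integer because $\rho$ is a root of the monic polynomial $f_n(X)\in\Z[X]$. Hence $\mu \in \Z[\rho] \subseteq \o_K$, and it remains only to compute the norm.

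To do this I would set $a := \tfrac{n+3}{2}$ and write $\mu = \rho^2 + a(\rho+1)$, so that, with $\rho_0,\rho_1,\rho_2$ the roots of $f_n$,
$$
N_{K\mid\Q}(\mu) = \prod_{i=0}^2 \bigl(\rho_i^2 + a(\rho_i+1)\bigr).
$$
Expanding this product regroups it as a cubic polynomial in $a$ whose coefficients are symmetric functions of the $\rho_i$. By Vieta applied to $f_n(X)=X^3+nX^2+nX-1$ these are governed by $e_1=-n$, $e_2=n$, $e_3=1$. The extreme coefficients are immediate: the $a^0$-coefficient is $\prod \rho_i^2 = e_3^2 = 1$, and the $a^3$-coefficient is $\prod(\rho_i+1) = -f_n(-1) = 2$.

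The two middle coefficients require slightly more bookkeeping; this is the only place any genuine computation occurs and is the step I would be most careful about. Using the power sums $p_2 = e_1^2-2e_2 = n^2-2n$ and $p_3 = -n^3+3n^2+3$ (from Newton's identities), a direct expansion yields $n^2+3n$ for the coefficient of $a$ and $-3(n+1)$ for the coefficient of $a^2$, so that
$$
N_{K\mid\Q}(\mu) = 1 + (n^2+3n)\,a - 3(n+1)\,a^2 + 2a^3.
$$
Finally I would substitute $a = \tfrac{n+3}{2}$ and place everything over the common denominator $4$: the $n^3$, $n^2$, and $n$ terms cancel identically while the constant terms collapse to $4$, giving $N_{K\mid\Q}(\mu) = 4/4 = 1$. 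Since $\mu$ is then an algebraic integer of norm $1$, it is a unit of $\o_K$, which completes the proof.
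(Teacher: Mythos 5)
Your proof is correct and takes essentially the same route as the paper's: a direct computation of $N_{K\mid\Q}(\mu)$ via symmetric functions of the roots of $f_n$, combined with the observation that $n$ odd makes $\mu$ an algebraic integer, so that norm $1$ forces it to be a unit. The only cosmetic difference is that the paper records the general norm form $N(a+b\rho+c\rho^2)$ and substitutes $a=b=(n+3)/2$, $c=1$, whereas you expand $\prod_{i}\bigl(\rho_i^2+a(\rho_i+1)\bigr)$ as a cubic in the single parameter $a$; your coefficients $1$, $n^2+3n$, $-3(n+1)$, $2$ and the final collapse to $4/4=1$ all check out.
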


\begin{proof}
Let $a,b,c \in \Z.$  A straightforward calculation shows that 
$N_{K\mid \Q}(a+b\rho+c\rho^2)=(a^2c+ac^2-abc)n^2-(2a^2c+a^2b-2ac^2-ab^2-bc^2+b^2c)n+(a^3+b^3+c^3-3abc).$  Substituting $a=b=(n+3)/2$ and $c=1$ into this expression yields 
$N_{K \mid \Q}(\mu) = 1.$  Since $\mu\in \o_K,$ the lemma is established.
\end{proof}

\begin{lemma}\label{ng_lem4}
The unit $\mu \in \o_K^\times$ is totally positive, while the unit $\rho \in \o_K^\times$ is neither 
totally positive nor totally negative.
\end{lemma}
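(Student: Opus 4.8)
The plan is to determine the signs of $\mu$ and of $\rho$ under each of the three real embeddings of $K_n$, using the root-location facts recorded at the start of this section: the three real conjugates of $\rho$ lie in $(1-n,2-n)$, in $(-1-4/n,-1-1/n)$, and in $(1/(n+1),1/n)$. A real embedding is pinned down by the root to which it sends $\rho$, so everything reduces to evaluating the quadratic $\mu(\rho)=\rho^2+\tfrac{n+3}{2}\rho+\tfrac{n+3}{2}$ at each of these three values of $\rho$.

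The claim about $\rho$ is immediate: the conjugate in $(1/(n+1),1/n)$ is positive while the conjugates in $(1-n,2-n)$ and $(-1-4/n,-1-1/n)$ are negative, so $\rho$ has conjugates of both signs and is therefore neither totally positive nor totally negative. For $\mu$ I would first handle the two extreme conjugates. At the positive conjugate every term of $\mu(\rho)$ is positive, so $\mu>0$ there at once. At the conjugate in $(1-n,2-n)$ I would compare $\rho$ with the zeros $\lambda_\pm=\tfrac{1}{4}\bigl(-(n+3)\pm\sqrt{(n+3)(n-5)}\bigr)$ of the upward parabola $\mu(\rho)$; the key point is that $\rho<\lambda_-$. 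This I would reduce to $\sqrt{(n+3)(n-5)}<3n-11$, which after squaring becomes $8(n^2-8n+17)>0$, valid for every $n\ge 5$ since $n^2-8n+17$ has negative discriminant. Because $\rho<2-n<\lambda_-$ and the parabola opens upward, $\mu>0$ at this conjugate as well. (When $n=5$ the parabola degenerates to $(\rho+2)^2$, which is positive since $-2$ is not a root of $f_5$.)

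To reach the remaining conjugate I would invoke Lemma \ref{ng_lem3}: since $N_{K\mid\Q}(\mu)=1>0$, an even number of the three conjugates of $\mu$ are negative. Having already exhibited two positive conjugates, at most one can be negative, so by parity none is; hence $\mu$ is totally positive.

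I expect the sign at the very negative conjugate to be the main obstacle, since the naive bound $\mu>\tfrac12(n-2)(n-7)$ is useless for the small odd values $n=5,7$ and forces the sharper comparison with $\lambda_-$ above. The parity argument is what lets me sidestep the genuinely delicate middle conjugate in $(-1-4/n,-1-1/n)$: its position relative to $\lambda_+\approx-1-\tfrac{2}{n-1}$ falls essentially on the boundary of that interval, so a direct sign computation there would be awkward, whereas the norm condition settles it instantly.
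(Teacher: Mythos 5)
Your proof is correct, but it reaches the total positivity of $\mu$ by a genuinely different route than the paper. The paper substitutes $n=(1-\rho_i^3)/(\rho_i^2+\rho_i)$ into the defining expression for $\mu$ to obtain the closed form $\mu_i=(\rho_i+1)^3/(2\rho_i)$, from which positivity at \emph{all three} conjugates is immediate: each factor $(\rho_i+1)^3$ and $2\rho_i$ has the same sign because every conjugate of $\rho$ lies either below $-1$ or above $0$. You instead treat $\mu$ as the upward parabola $\rho^2+\tfrac{n+3}{2}\rho+\tfrac{n+3}{2}$, verify positivity directly at the conjugate in $(1/(n+1),1/n)$ and, via the comparison $2-n<\lambda_-$ (equivalent to $8(n^2-8n+17)>0$), at the conjugate in $(1-n,2-n)$, and then dispose of the delicate middle conjugate by a parity argument from $N_{K\mid\Q}(\mu)=1$ in Lemma \ref{ng_lem3}. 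All of your steps check out, including the degenerate case $n=5$ and the observation that a direct sign computation at the middle conjugate would be borderline since $\lambda_+$ sits inside $(-1-4/n,-1-1/n)$. The trade-off is that your argument leans on the norm computation and some inequality bookkeeping, whereas the paper's identity does the work uniformly and in one line; on the other hand, your norm-plus-parity device is a robust trick that would survive even if no such clean factorization of $\mu$ were available.
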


\begin{proof}
The unit $\rho$ is a zero of $f_n(X).$  As mentioned in 
the introduction, two of the zeros of $f_n(X)$ are $<-1,$ while the third is $>0.$  In particular, $\rho$ is neither totally positive nor totally negative.  
Since $\rho_i^3+n\rho_i^2+n\rho_i-1=0,$ $(i=0,1,2),$ we see that 
$n = (1-\rho_i^3)/(\rho_i^2+\rho_i).$  Substituting this expression for $n$ into 
$\mu_i = \left(\frac{n+3}{2}\right)(1+\rho_i) + \rho_i^2$ yields 
\begin{equation}\label{ng_eqn1}
\mu_i = (\rho_i+1)^3/(2\rho_i)
\end{equation}
The location of the $\rho_i$ now confirms that each $\mu_i$ is positive.
\end{proof}

\noindent Let $U:=\langle -1, \mu, \rho \rangle \le \o_K^\times.$  We will need to know that $\lbrack \o_K^\times : U \rbrack < \infty.$  It will suffice to prove the following.

\begin{lemma}
The subset $S:=\{\mu, \rho\} \subset \o_K^\times$ is multiplicatively independent.
\end{lemma}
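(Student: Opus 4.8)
The plan is to reduce multiplicative independence to the nonvanishing of a regulator and then to read off the required sign data from the known locations of the roots. Since $K=K_n$ is a totally real cubic field, $\rk\o_K^\times=2$, so the logarithmic embedding
\[
\lambda(u)=\bigl(\log|\sigma_1(u)|,\log|\sigma_2(u)|,\log|\sigma_3(u)|\bigr)
\]
attached to the three real embeddings $\sigma_1,\sigma_2,\sigma_3$ of $K$ has kernel $\{\pm1\}$ and image a rank-$2$ lattice in the hyperplane $\sum x_i=0$. A relation $\mu^a\rho^b=\pm1$ forces $a\,\lambda(\mu)+b\,\lambda(\rho)=0$, so $S=\{\mu,\rho\}$ is multiplicatively independent as soon as $\lambda(\mu)$ and $\lambda(\rho)$ are linearly independent; equivalently, as soon as some $2\times2$ minor of the matrix with rows $\lambda(\mu),\lambda(\rho)$ is nonzero. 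Thus it suffices to exhibit one nonzero minor.

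First I would label the three real roots of $f_n$ by their locations, recorded in the introduction for $n\ge5$: write $\rho^{(1)}\in(1-n,2-n)$, $\rho^{(2)}\in(-1-4/n,-1-1/n)$, and $\rho^{(3)}\in(1/(n+1),1/n)$, and let $\sigma_j$ be the embedding sending $\rho$ to $\rho^{(j)}$. By Equation (\ref{ng_eqn1}) the corresponding conjugate of $\mu$ is $\mu^{(j)}=(\rho^{(j)}+1)^3/(2\rho^{(j)})$. From the three intervals one reads off, for every $n\ge5$, that $|\rho^{(1)}|>3$, so $\log|\rho^{(1)}|>0$; that $0<\rho^{(3)}<1$, so $\log|\rho^{(3)}|<0$; and, using $|\rho^{(1)}+1|=|\rho^{(1)}|-1$ together with the bounds on $\rho^{(3)}$, that both $|\mu^{(1)}|>1$ and $|\mu^{(3)}|>1$, hence $\log|\mu^{(1)}|>0$ and $\log|\mu^{(3)}|>0$.

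I would then take the minor built from the two extreme embeddings $\sigma_1$ and $\sigma_3$:
\[
R=\det\begin{pmatrix}\log|\mu^{(1)}| & \log|\mu^{(3)}|\\ \log|\rho^{(1)}| & \log|\rho^{(3)}|\end{pmatrix}
=\log|\mu^{(1)}|\,\log|\rho^{(3)}|-\log|\mu^{(3)}|\,\log|\rho^{(1)}|.
\]
With the signs just established the first product is negative (of the form $(+)(-)$) while the second product is positive, so that the two terms reinforce rather than cancel and $R<0$ for every $n\ge5$. Hence $R\neq0$, the rows $\lambda(\mu)$ and $\lambda(\rho)$ are independent, and $S$ is multiplicatively independent.

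The key structural point, and the one I would be most careful about, is the choice of which embedding to delete. The regulator is a difference of products of logarithms, and a careless choice (say, retaining the middle root $\rho^{(2)}$) mixes large positive and large negative entries and risks a cancellation that would then have to be controlled by delicate estimates and a finite computer check, as in the sextic case of Section~2. Pairing the two \emph{extreme} roots $\rho^{(1)}$ (large and negative) and $\rho^{(3)}$ (small and positive) arranges that both products in $R$ carry the same sign, so that nonvanishing follows uniformly in $n$ from the qualitative sign data alone, with no need to separate out small values of $n$. The only genuine work, then, is confirming the four sign inequalities from the root intervals, which is routine.
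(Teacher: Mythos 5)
Your proof is correct, and it follows the same basic strategy as the paper: reduce multiplicative independence to the nonvanishing of a $2\times 2$ minor of the logarithmic embedding matrix, then settle the sign of that minor from the known locations of the three real roots together with the identity $\mu_i=(\rho_i+1)^3/(2\rho_i)$. The one genuine difference is which column you delete. The paper keeps the embeddings attached to the large negative root and the root near $-1$, computes $\log|\mu_0|\log|\rho_1|-\log|\rho_0|\log|\mu_1|$, and must therefore produce quantitative bounds such as $|\mu_1|<27/(2n^2(n+1))<1$ and $|\mu_0|>(n-3)^3/(2(n-1))>1$; these are stated only for $n\ge 7$, and the case $n=5$ is disposed of by a separate hand verification. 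You instead pair the two extreme roots, so that the two products in the determinant have opposite signs and reinforce; since the rows of the log matrix sum to zero, all three minors agree up to sign, so your choice loses nothing and the qualitative sign data ($|\rho^{(1)}|>3$, $0<\rho^{(3)}<1$, $|\mu^{(1)}|>1$, $|\mu^{(3)}|>1$) already hold uniformly for $n\ge 5$. This eliminates both the refined interval for $\rho_1$ and the special check at $n=5$; the only inequality that needs a moment's care is $(|\rho^{(1)}|-1)^3>2|\rho^{(1)}|$ for $|\rho^{(1)}|>3$, which is immediate. In short: same method, better choice of minor, slightly cleaner proof.
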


\begin{proof}
When $n \ge 7,$ the zeros of $f_n(X)$ satisfy $\rho_0 \in (1-n,2-n),$ 
$\rho_1 \in (-1-3/n,-1-1/n)$ and $\rho_2 \in (1/(n+1),1/n).$  The regulator, $R,$ of $S$ is 
$$
R = \bigg\vert \log \vert \mu_0 \vert \log \vert \rho_1 \vert - \log \vert \rho_0 \vert \log \vert \mu_1 \vert\bigg\vert.
$$

\noindent For $n \ge 7,$  we have that $1+1/n < \vert \rho_1 \vert < 1+3/n$ and 
$
1/n < \vert \rho_1+1 \vert < 3/n.
$
Using the relation given in Equation (\ref{ng_eqn1}), we conclude that 
\begin{align*}
1 &> \frac{27}{2n^2(n+1)} = (3/n)^3/(2(1+1/n))\\
&> \vert \mu_1 \vert = \vert \rho_1+1 \vert^3/(2\vert \rho_1 \vert)\\
&> (1/n)^3/(2(1+3/n)) = \frac{1}{2n^2(n+3)}.
\end{align*}
Since $1-n < \rho_0 < 2-n,$ we have that $n-2 < \vert \rho_0 \vert < n-1$ and
$
n-3 < \vert \rho_0+1 \vert < n-2.  
$
Using Equation (\ref{ng_eqn1}) again, we conclude that 
$$
(n-2)^2/2 = (n-2)^3/(2(n-2)) > \vert \mu_0 \vert = \vert \rho_0+1 \vert^3/(2\vert \rho_0 \vert) > (n-3)^3/(2(n-1))  > 1.
$$

\noindent Hence
\begin{align*}
R &= \log \vert \mu_0 \vert \log \vert \rho_1 \vert - \log \vert \rho_0 \vert \log \vert \mu_1 \vert\\
&\ge
\log \left(\frac{(n-3)^3}{2(n-1)}\right) \log(1+1/n) - \log\left(n-2\right)\log\left(\frac{27}{2n^2(n+1)}\right)\\
&\ge \log(n-2)\log\left(\frac{2n^2(n+1)}{27}\right)\\
&> 0,
\end{align*}
and we conclude that $S$ is a multiplicatively independent subset of $K_n$ for $n \ge 7.$  
It may be verified by hand that $R \neq 0$ when $n=5,$ so that $\{\rho,\mu\}$ is 
a multiplicatively independent subset of $K_n$ for all $n$ under consideration.
\end{proof}

\begin{lemma}\label{ng_lem5}
Let $\frak{P}$ be a prime ideal of $\o_K$ with $\frak{P}\cap \Z = p\Z.$  If $\rho \equiv 2 \pmod{\frak{P}}$ 
and $p \nmid \disc(f_n),$ then $\lbrack \o_K/\frak{P} : \Z/p\Z \rbrack = 1.$
\end{lemma}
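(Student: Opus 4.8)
The plan is to reduce everything to the behaviour of $f_n(X)$ modulo $p$ and to exploit the fact that $\rho$ generates the residue field $\o_K/\frak{P}$ over the prime field $\Z/p\Z$.

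First I would record the index--discriminant relation $\disc(f_n) = [\o_K : \Z[\rho]]^2 \cdot \disc(\o_K/\Z)$. Since by hypothesis $p \nmid \disc(f_n)$, this immediately gives $p \nmid [\o_K : \Z[\rho]]$. This is the essential structural input, because it guarantees that reduction modulo $p$ induces a ring isomorphism $\o_K/p\o_K \cong (\Z/p\Z)[X]/(\bar{f}_n(X))$, where $\bar{f}_n$ denotes $f_n$ read modulo $p$.

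Next, since $p \in \frak{P}$, the residue field $\o_K/\frak{P}$ is a quotient of $\o_K/p\o_K$, and hence is generated over $\Z/p\Z$ by the image $\bar{\rho}$ of $\rho$. The hypothesis $\rho \equiv 2 \pmod{\frak{P}}$ says precisely that $\bar{\rho}$ equals the image of the rational integer $2$, which already lies in the prime field $\Z/p\Z$. Therefore $\o_K/\frak{P} = (\Z/p\Z)(\bar{\rho}) = \Z/p\Z$, and we conclude that $\lbrack \o_K/\frak{P} : \Z/p\Z \rbrack = 1$.

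I expect no serious obstacle here; the only point requiring care is the passage from $p \nmid \disc(f_n)$ to $p \nmid [\o_K : \Z[\rho]]$, which is exactly what the index--discriminant relation supplies. Equivalently, one may phrase the whole argument through the Dedekind--Kummer theorem: since $p \nmid \disc(f_n)$, the factorization of $p\o_K$ mirrors the factorization of $\bar{f}_n$ into distinct irreducibles, and the condition $2-\rho \in \frak{P}$ forces $\frak{P}$ to correspond to the linear factor $X-2$, whose degree is the residue degree $1$. (One checks for consistency that $\rho \equiv 2 \pmod{\frak{P}}$ forces $f_n(2) = 6n+7 \in \frak{P}\cap\Z = p\Z$, so that $2$ is indeed a root of $\bar{f}_n$.)
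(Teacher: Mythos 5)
Your proof is correct and rests on the same essential input as the paper's: since $p \nmid \disc(f_n)$, the order $\Z[\rho]$ is maximal at $p$, so the residue field $\o_K/\frak{P}$ is generated over $\Z/p\Z$ by $\bar{\rho} \equiv 2$, which is rational, forcing residue degree $1$. The paper phrases this via the explicit factorization $f_n(X) \equiv (X-2)(X^2+(n+2)X+3n+4) \pmod{\frak{P}}$ together with Dedekind's factorization theorem, which is precisely the Dedekind--Kummer variant you sketch at the end, so the two arguments coincide in substance.
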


\begin{proof}
Noting that 
$f_n(X) = (X-\rho)(X^2+(\rho+n)X+\rho^2+n\rho+n)$ and making use of the fact that 
$\rho \equiv 2 \pmod{\frak{P}}$ along with the monomorphism 
$\frac{\Z}{p\Z}\lbrack X \rbrack \hookrightarrow \frac{\o_K}{\frak{P}}\lbrack X \rbrack,$ we see that 
$f_n(X)$ factors as 
$(X-2)(X^2+(n+2)X+4+3n)$ in $(\Z/p\Z)\lbrack X \rbrack.$  Applying Dedekind's Factorization Theorem, 
we conclude that $\lbrack \o_K/\frak{P}:\Z/p\Z\rbrack = 1.$
\end{proof}

\begin{lemma}\label{ng_lem6}
If a prime $p$ is a common divisor of $6n+7$ and $\disc(f_n),$ then $p=37$ or $p=47.$
\end{lemma}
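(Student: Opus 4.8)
The plan is to follow exactly the template of the three analogous lemmas already proved in the paper (Lemmas \ref{deg6_lem1}, \ref{deg4_lem1}, and \ref{ng_lem2}): I assume that a prime $p$ divides both $6n+7$ and $\disc(f_n)=n^4-18n^2-27$, translate this into the two simultaneous congruences $6n\equiv -7\pmod{p}$ and $n^4-18n^2-27\equiv 0\pmod{p}$, and then eliminate $n$ to obtain a purely numerical divisibility condition on $p$. The only arithmetic input I need beyond the form of $f_n$ is the discriminant formula $\disc(f_n)=n^4-18n^2-27$ recorded at the start of this section.

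First I would dispose of the small primes. Since $6n$ is even, $6n+7$ is always odd, so $p\neq 2$; and since $6n+7\equiv 1\pmod{3}$, we also have $p\neq 3$. Hence $p\nmid 6$, and $6$ (equivalently $1296=6^4$) is invertible modulo $p$. Now the linear congruence $6n\equiv -7$ may be solved as $n\equiv -7/6\pmod{p}$ and substituted into the discriminant. Concretely, $n^2\equiv 49/36$ and $n^4\equiv 2401/1296$, so that
$$
n^4-18n^2-27\;\equiv\;\frac{2401}{1296}-\frac{18\cdot 49}{36}-27\;=\;-\frac{64343}{1296}\pmod{p}.
$$
Equivalently — and this is the cleaner way to package the elimination — one computes the resultant of $6X+7$ and $X^4-18X^2-27$, which equals $6^4\cdot g(-7/6)=-64343$ where $g(X)=X^4-18X^2-27$; a common root modulo $p$ (with $p\nmid 6$) forces $p$ to divide this resultant. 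Multiplying the displayed congruence through by the invertible $1296$ and using $\disc(f_n)\equiv 0$ yields $64343\equiv 0\pmod{p}$, i.e. $p\mid 64343$.

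The last step is the factorization $64343=37^2\cdot 47$, from which $p\in\{37,47\}$ follows immediately since $p$ is prime. I do not expect any genuine obstacle here: the argument is a one-variable elimination followed by a factorization, entirely parallel to the earlier lemmas. The only points requiring care are the explicit exclusion of $p=2,3$ (needed so that $6$ is invertible and the substitution is legitimate) and the verification that the resulting integer factors as $37^2\cdot 47$ rather than producing additional prime divisors; both are routine but must be checked to be sure the list $\{37,47\}$ is complete. It is also worth noting for consistency with Lemma \ref{ng_lem2} that the prime $37$ reappears, as expected, while $47$ is the genuinely new constraint coming from the discriminant.
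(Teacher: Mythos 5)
Your proposal is correct and follows the same elimination the paper intends: the paper's proof is a one-line assertion that the two congruences force $p\in\{37,47\}$, and you have simply supplied the underlying computation (excluding $p=2,3$, substituting $n\equiv -7/6 \pmod{p}$ into $n^4-18n^2-27$ to get $p\mid 64343=37^2\cdot 47$). The arithmetic checks out, so no issues.
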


\begin{proof}
The congruences $n^4-18n^2-27 = \disc(f_n) \equiv 0 \pmod{p}$ and 
$6n+7 \equiv 0 \pmod{p}$ hold simultaneously only if $p=37$ or $p=47.$
\end{proof}

\begin{lemma} \label{ng_lem7}
Let $y \in \Z$ not divisible by 
either $37$ or $47$ and such that $y^r = 6n+7,$ 
and let $p$ be a prime dividing $r.$  If there are primes $\ell_i \mid (6n+7),$ $(i=1,2),$ and such that 
\begin{enumerate}[label=(\roman*)]
\item $(2\,\vert\,\ell_1)_p=1$ and $(3\,\vert\,\ell_1)_{p}\neq 1,$ \label{ng_lem7b}
\item $(2\,\vert\,\ell_2)_p \neq 1$ and $(3\,\vert\,\ell_2)_{p}\neq 1,$ \label{ng_lem7c}
\end{enumerate}
then $p \nmid \lbrack \o_K^\times : U \rbrack.$
\end{lemma}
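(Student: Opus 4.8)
The plan is to run the same unit-index argument as in Sections 2 and 3, substituting reduction at a degree-one prime for the Galois action, which is unavailable here. Suppose for contradiction that $p \mid [\o_K^\times : U]$. Since $\{\mu,\rho\}$ is multiplicatively independent, $\o_K^\times/U$ is a finite abelian group whose order is divisible by $p$, so by Cauchy's theorem there is a unit $u \in \o_K^\times \smallsetminus U$ with $u^p \in U$. Writing $u^p = \pm\mu^b\rho^c$ and absorbing $p$th powers of $\mu$ and $\rho$ into $u$ (which does not change whether $u\in U$), I may take $0 \le b,c < p$. The goal is to force the sign to be $+$ and $b=c=0$, so that $u = \pm 1 \in U$, a contradiction.

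Next I would manufacture the two primes. Because $37,47 \nmid y$ and $6n+7=y^r$, neither $37$ nor $47$ divides $6n+7$, so Lemma \ref{ng_lem6} gives $\ell_i \nmid \disc(f_n)$ for $i=1,2$. Since $\ell_i \mid f_n(2)$, the reduction of $f_n(X)$ modulo $\ell_i$ has $X-2$ as a factor, so Dedekind's theorem (in the form of Lemma \ref{ng_lem5}) furnishes a prime $\frak{P}_i \mid \ell_i\o_K$ of residue degree one with $\rho \equiv 2 \pmod{\frak{P}_i}$ and $\o_K/\frak{P}_i \cong \Z/\ell_i\Z$. Using the identity $\mu = (\rho+1)^3/(2\rho)$ from Equation (\ref{ng_eqn1}), I obtain $\mu \equiv 27/4 \pmod{\frak{P}_i}$, whence $u^p \equiv \pm 3^{3b}2^{\,c-2b} \pmod{\ell_i}$; the left-hand side is a $p$th power in $(\Z/\ell_i\Z)^\times$ because $u$ is a unit at $\frak{P}_i$. (Note that the hypotheses $(3\,\vert\,\ell_1)_p \neq 1$ and $(2\,\vert\,\ell_2)_p \neq 1$ already force $\ell_1,\ell_2 \equiv 1 \pmod p$, so the relevant power-residue homomorphisms are nontrivial.)

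For odd $p$ the sign is harmless, since $-1=(-1)^p$ is a $p$th power residue modulo $\ell_i$. Letting $\chi_\ell \colon (\Z/\ell\Z)^\times \to \Z/p\Z$ denote the homomorphism with $\chi_\ell(a)=0$ precisely when $a$ is a $p$th power residue, the displayed congruence becomes $3b\,\chi_{\ell_i}(3) + (c-2b)\,\chi_{\ell_i}(2)=0$. Hypothesis (i) gives $\chi_{\ell_1}(2)=0$ and $\chi_{\ell_1}(3)\neq 0$, so $3b\,\chi_{\ell_1}(3)=0$ and hence $b=0$ (using $p\neq 3$); hypothesis (ii) gives $\chi_{\ell_2}(2)\neq 0$, so with $b=0$ we obtain $c\,\chi_{\ell_2}(2)=0$ and thus $c=0$. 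Then $u^p=\pm 1$ forces $u=\pm 1 \in U$.

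The genuinely different case, and the step I expect to be the main obstacle, is $p=2$: here the congruences cannot pin down the ambiguous sign, so I would bring in archimedean data. Since $u^2$ is totally positive, $\pm\mu^b\rho^c$ must be totally positive; by Lemma \ref{ng_lem4} the unit $\mu$ is totally positive while $\rho$ is positive in exactly one of the three real embeddings and negative in the other two, so total positivity forces the sign to be $+$ and $c$ to be even, i.e. $c=0$. Reducing $u^2=\mu^b$ modulo $\frak{P}_1$ leaves $u^2 \equiv (27/4)^b \pmod{\ell_1}$, and since $\left(\tfrac{27/4}{\ell_1}\right)=\left(\tfrac{3}{\ell_1}\right)=-1$ by hypothesis (i), the element $(27/4)^b$ is a quadratic nonresidue unless $b=0$; hence $b=0$. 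In every case $u=\pm 1 \in U$, contradicting $u \notin U$, and therefore $p \nmid [\o_K^\times : U]$.
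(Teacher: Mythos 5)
Your proof is correct and follows essentially the same route as the paper: reduce a putative $u$ with $u^p=\pm\mu^b\rho^c$ at degree-one primes above $\ell_1,\ell_2$ where $\rho\equiv 2$ and $\mu\equiv 27/4$, kill the exponents via the power-residue hypotheses for odd $p$, and use total positivity of $\mu$ versus the mixed signs of $\rho$ for $p=2$. The only (harmless) deviations are that you justify the existence of the degree-one primes explicitly and, in the $p=2$ case, rule out $b=1$ by reducing modulo the prime above $\ell_1$ rather than $\ell_2$ as the paper does; both choices work since each $\ell_i$ satisfies $(3\,\vert\,\ell_i)_2\neq 1$.
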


\begin{proof}
Choose prime ideals $\mathcal{L}_i \subset \o_K,$ $(i=1,2),$ with $\mathcal{L}_i \cap \Z = \ell_i \Z$ 
and such that $\rho \equiv 2 \pmod{\mathcal{L}_i}.$  Then by 
equation \ref{ng_eqn1} $\mu \equiv 27/4 \pmod{\mathcal{L}_i}.$  
If $p \mid \lbrack \o_K^\times : U \rbrack,$ then there is a unit $u \in \o_K^\times \smallsetminus U$ 
and $a,b\in \Z$ with $0 \le a,b < p$ and such that $u^p = \pm \rho^a\mu^b.$  We remark that 
Lemma \ref{ng_lem6} guarantees that $\ell_i \nmid \disc(f_n)$ and so 
Lemma \ref{ng_lem5} then guarantees that $\lbrack \o_K/\mathcal{L}_i : \Z/\ell_i\Z \rbrack = 1.$\\
\\
If $p \neq 2,$ then by making use of the fact that $\lbrack \o_K/\mathcal{L}_i : \Z/\ell_i\Z \rbrack = 1$ 
along with the congruences for $\rho$ and $\mu,$ we find that $(2^{a-2b}3^{3b}\,\vert\,\ell_i)_p = 1.$  
We remind the reader that $p \neq 3,$ since $p \mid r$ and since $r \not\equiv 0 \pmod{3}.$  
Hence upon applying condition \ref{ng_lem7b} we conclude that $p \mid b$ and hence that $b=0.$  Setting $b=0$ 
we conclude that $(2^a\,\vert\,\ell_i)_p = 1.$  Applying condition \ref{ng_lem7c} we find that $p \mid a$ 
and so $a=0.$  Hence $u^p = \pm 1$ and we reach the contradiction that $u = \pm 1 \in U.$\\
\\
Hence $p=2$ and $u^2=\pm \rho^a\mu^b .$  Since $\pm u^2\mu^{-b}$ is either totally positive or totally 
negative and since $\rho$ is neither totally positive nor totally negative, $a=0.$  Furthermore, since both $\mu$ and $u^2$ are totally positive, we are forced to conclude that $u^2=\mu^b.$  If $b=0,$ then $u=\pm 1 \in U;$ contradiction.
Therefore $b=1$ and $u^2=\mu.$  Since $\mu \equiv 27/4 \pmod{\mathcal{L}_2},$ we conclude 
that $(3\,\vert\,\ell_2)_2 = 1$ and so contradict condition \ref{ng_lem7c}.  Having 
exhausted the possibilities, we are forced to conclude that 
$p \nmid \lbrack \o_K^\times : U \rbrack.$
\end{proof}

\begin{proposition}\label{ng_prop3}
Let $y \in \Z$ with $(37\times 47,y)=1$ and such that $y^r = 6n+7.$  If for each 
prime $p \mid r$ there are primes $\ell_i \mid (6n+7)$ $(i=1,2,3)$ satisfying 
\begin{enumerate}[label=(\roman*)]
\item $(2\,\vert\,\ell_1)_p=(37\,\vert\,\ell_1)_p=1,$ and $(3\,\vert\,\ell_1)_{p}\neq 1,$\label{ng_prop3a}
\item $(37\,\vert\,\ell_2)_p=1,$ $(2\,\vert\,\ell_2)_p \neq 1,$ and $(3\,\vert\,\ell_2)_{p}\neq 1,$ \label{ng_prop3b}
\item $(37\,\vert\,\ell_3)_p\neq 1,$\label{ng_prop3c}
\end{enumerate}
then $(2-\rho) = J^r$ where $\lbrack J \rbrack$ has order $r.$
\end{proposition}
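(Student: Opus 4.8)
The plan is to argue by contradiction, using Proposition \ref{ng_prop2} to manufacture $J$ and then excluding every proper divisor of $r$ as the order of $[J]$. Since $(37\times 47,y)=1$ forces $37\nmid 6n+7$, Proposition \ref{ng_prop2} gives an ideal with $(2-\rho)\o_K=J^r$, so the order of $[J]$ divides $r$. Suppose that order $d$ satisfies $d<r$, and pick a prime $p\mid r/d$; then $d\mid r/p$, so $J^{r/p}$ is principal, say $J^{r/p}=z\o_K$, and hence $(2-\rho)\o_K=(z\o_K)^p$, i.e. $2-\rho=uz^p$ for some unit $u\in\o_K^\times$. Conditions \ref{ng_prop3a} and \ref{ng_prop3b} contain exactly hypotheses \ref{ng_lem7b} and \ref{ng_lem7c} of Lemma \ref{ng_lem7}, so $p\nmid[\o_K^\times:U]$; absorbing $p$th powers into $z^p$ I may then assume $u=\pm\rho^a\mu^b$ with $0\le a,b<p$.

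The main obstacle is that one cannot reduce $2-\rho=uz^p$ modulo the natural primes: at any prime $\mathcal{L}_i\mid\ell_i$ (with $\ell_i\mid 6n+7$) chosen so that $\rho\equiv 2$, the left-hand side vanishes. The device that circumvents this — and that explains the appearance of $37$ in the hypotheses — is to pass to the companion factor $\beta:=(2-\rho_1)(2-\rho_2)=\rho^2+(n+2)\rho+(3n+4)\in\o_K$ of Lemma \ref{ng_lem1}. Since $(2-\rho)\beta=f_n(2)=6n+7=y^r$, I can write $\beta=y^r/(2-\rho)=u^{-1}(y^{r/p}/z)^p=u^{-1}w^p$, where $w:=y^{r/p}/z\in K^\times$. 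Crucially, $\beta$ does \emph{not} vanish at $\mathcal{L}_i$: using $6n\equiv-7$ one finds $\beta\equiv 12+5n\equiv 37/6\pmod{\mathcal{L}_i}$, while equation (\ref{ng_eqn1}) gives $\mu\equiv 27/4$ and $\rho\equiv 2$. By Lemma \ref{ng_lem6} each $\ell_i\neq 37,47$, so $\ell_i\nmid\disc(f_n)$ and Lemma \ref{ng_lem5} makes $\mathcal{L}_i$ of residual degree $1$, i.e. $\o_K/\mathcal{L}_i\cong\Z/\ell_i\Z$.

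I would then read off a power-residue identity at each $\ell_i$. From $\beta=u^{-1}w^p$ the element $w^p\equiv u\cdot(37/6)\equiv \pm 2^{a}(27/4)^{b}(37/6)\pmod{\mathcal{L}_i}$ is a $p$th-power residue, i.e. (for $p$ odd, where the sign is harmless as $-1=(-1)^p$, and using multiplicativity of the symbol) $\bigl(37\cdot 2^{a-2b-1}3^{3b-1}\,\vert\,\ell_i\bigr)_p=1$. The three conditions are engineered to collapse this in sequence: at $\ell_1$, where $(2\,\vert\,\ell_1)_p=(37\,\vert\,\ell_1)_p=1$ but $(3\,\vert\,\ell_1)_p\neq1$, the identity forces $3b\equiv1\pmod p$; feeding this into $\ell_2$, where $(37\,\vert\,\ell_2)_p=1$ and $(2\,\vert\,\ell_2)_p\neq1$, forces $a\equiv 2b+1\pmod p$; then at $\ell_3$ both exponents $a-2b-1$ and $3b-1$ vanish, so the identity degenerates to $(37\,\vert\,\ell_3)_p=1$, contradicting condition \ref{ng_prop3c}.

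It remains to treat $p=2$, where $-1$ need not be a square and the sign must be tracked. Here I would invoke Lemma \ref{ng_lem4}: $2-\rho$ is totally positive (each $\rho_i<2$) and $z^2$ is totally positive, so $u$ is totally positive; since $\rho$ is neither totally positive nor totally negative while $\mu$ is totally positive, this pins the sign to $+$ and forces $a=0$, leaving $u=\mu^b$. The same reduction then yields $\bigl(37\cdot 2^{-1}3^{3b-1}\,\vert\,\ell_i\bigr)_2=1$, and evaluating the Legendre symbols shows this fails at $\ell_1$ when $b=0$ and at $\ell_2$ when $b=1$. Having contradicted the hypotheses for every prime $p\mid r$, the order of $[J]$ cannot be smaller than $r$, and so equals $r$. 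I expect the genuine work to be the bookkeeping that aligns the $\beta$-reduction with conditions \ref{ng_prop3a}--\ref{ng_prop3c}; everything else is routine.
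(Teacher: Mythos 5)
Your argument is correct and follows essentially the same route as the paper: the same candidate ideal from Proposition \ref{ng_prop2}, the same use of Lemma \ref{ng_lem7} to reduce $u$ to $\pm\rho^a\mu^b$, and the same key congruence $(2^{a-2b-1}3^{3b-1}37\,\vert\,\ell_i)_p=1$ obtained by evaluating the cofactor $(2-\rho_1)(2-\rho_2)\equiv 37/6$ at the primes $\mathcal{L}_i$, collapsed in the same order by conditions \ref{ng_prop3a}--\ref{ng_prop3c}. The only cosmetic difference is at $p=2$: you dispose of the sign via total positivity of $2-\rho$ and $\mu$, whereas the paper conjugates the relation $2-\rho_i=\pm\rho_i^a\mu_i^b\beta_i^p$ and multiplies the $i=1,2$ instances, so the sign squares away uniformly for all $p$.
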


\begin{proof}
Since the hypotheses of Proposition \ref{ng_prop2} hold, we have that 
$(2-\rho) = J^r$ for some ideal $J \in \mathcal{I}_K$ and so the order of $\lbrack J \rbrack$ is 
$\le r.$  If it is strictly $< r,$ then $(2-\rho)$ is a prime power of a principal ideal for 
some prime $p \mid r.$  Supposing this to be the case and writing 
$(2-\rho)=(\beta)^p,$ for some $\beta \in \o_K,$ we have that 
$2-\rho = u\beta^p$ for some $u \in \o_K^\times.$  Since the hypotheses of Lemma \ref{ng_lem7} hold 
we have that $p \nmid \lbrack \o_K^\times : U \rbrack.$  
Writing $1=gp+h\lbrack\o_K^\times:U\rbrack$ for some $g,h \in \Z,$ we find that 
$$
2-\rho = u\beta^p = u^{gp+h\lbrack\o_K^\times:U\rbrack}\beta^p 
=(\pm \rho^{a^\prime}\mu^{b^\prime})^h(u^g\beta)^p 
=\pm \rho^a\mu^b(\beta^\prime)^p.
$$
For notational convenience, replace $\beta^\prime$ with $\beta$ and note that 
the same relation holds in the other two isomorphic cubic number fields to conclude that
$2-\rho_i = \pm \rho_i^a\mu_i^b\beta_i^p$ $(i=0,1,2).$\\
\\
Choose prime ideals $\mathcal{L}_i \subset \o_K,$ $(i=1,2,3),$ with $\mathcal{L}_i \cap \Z = \ell_i \Z$ 
and such that $\rho \equiv 2 \pmod{\mathcal{L}_i}.$  Then $\mu \equiv 27/4 \pmod{\mathcal{L}_i}.$
On the one hand, 
$(2-\rho_1)(2-\rho_2) = (\rho_1\rho_2)^a(\mu_1\mu_2)^b(\beta_1\beta_2)^p = 
\rho^{-a}\mu^{-b}(\beta_1\beta_2)^p \equiv 2^{2b-a}3^{-3b}(\beta_1\beta_2)^p \pmod{\mathcal{L}_i}.$
On the other hand, 
$(2-\rho_1)(2-\rho_2)=\rho^2+(2+n)\rho+(4+3n)\equiv 5n+12 \pmod{\mathcal{L}_i} \equiv 37/6 
\pmod{\mathcal{L}_i}.$
Hence $37 \equiv 2^{2b-a+1}3^{1-3b}(\beta_1\beta_2)^p \pmod{\mathcal{L}_i}.$  
Lemma \ref{ng_lem6} guarantees that $\ell_i \nmid \disc(f_n)$ and so 
Lemma \ref{ng_lem5} then guarantees that $\lbrack \o_K/\mathcal{L}_i : \Z/\ell_i\Z \rbrack = 1.$  
Thus $(2^{a-2b-1}3^{3b-1}37\,\vert\,\ell_i)_p = 1.$  Applying condition \ref{ng_prop3a}, we conclude that 
$3b -1 \equiv 0 \pmod{p}.$  Using this result along with condition \ref{ng_prop3b}, we find that 
$a-2b-1 \equiv 0 \pmod{p}.$  Since $p$ divides $3b-1$ and $a-2b-1,$ we deduce that 
$(37\,\vert\,\ell_3)_p = 1$ and so contradict condition \ref{ng_prop3c}.  We conclude that 
$(2-\rho)$ is not a prime power of a principal ideal for any prime $p \mid r$ and 
hence that $\lbrack J \rbrack$ has order $r.$
\end{proof}

\noindent Having established sufficient conditions for a field $K \in \frak{F}$ to posses an ideal class of arbitrary nontrivial order $r \not\equiv 0 \pmod{3},$ we now prove Proposition \ref{ng_prop1} by showing that for each such $r$ 
there is a field $K \in \frak{F}$ satisfying these conditions.

\begin{proof}[Proof of Proposition \ref{ng_prop1}]
Let $p$ be a prime dividing $r.$  Let $L_1 = \Q(\zeta_p,2^{1/p},37^{1/p})$ and 
$L_2= \Q(\zeta_{2p},3^{1/p}),$ where $\zeta_n$ denotes a primitive $n$th root of unity.  A prime 
$\ell$ totally splits in $L_1$ if and only if $\ell \equiv 1 \pmod{p}$ and 
$(2\,\vert\,\ell)_p=(37\,\vert\,\ell)_p=1.$  Similarly a prime $\ell$ totally splits in $L_2$
if and only if $\ell \equiv 1 \pmod{2p}$ and $(3\,\vert\,\ell)_p=1.$  Since 
$L_2 \not\subset L_1,$ we apply Bauer's Theorem to conclude that there are infinitely 
many primes $\ell$ that totally split in $L_1$ but not in $L_2.$  Hence 
there are infinitely many primes $\ell$ for which 
$(2\,\vert\,\ell)_p=(37\,\vert\,\ell)_p=1,$ and $(3\,\vert\,\ell)_{p}\neq 1.$  In other words, 
there are infinitely many primes $\ell$ satisfying condition \ref{ng_prop3a} of Proposition \ref{ng_prop3}.  
Analogous arguments show that there are infinitely many primes satisfying 
condition \ref{ng_prop3b} and infinitely many primes satisfying condition \ref{ng_prop3c} of Proposition \ref{ng_prop3}.\\
\\
For each prime $p \mid r,$ choose a triple of primes $\{\ell_{i,p}\}_{i=1}^3$ satisfying conditions 
\ref{ng_prop3a}-\ref{ng_prop3c} of Proposition \ref{ng_prop3} making sure never to choose $2,3,37,$ or 
$47.$  Let $s \in \N$ with $(s,37\times 47)=1$ and such that 
$s \left(\prod\limits_{\substack{\textmd{primes} \\ p \mid r}} \prod_{i=1}^3 \ell_{i,p}\right) \equiv 1 \pmod{6}.$  Set $y:= s \left(\prod\limits_{\substack{\textmd{primes} \\ p \mid r}} \prod_{i=1}^3 \ell_{i,p}\right).$
If $y^r \equiv 3 \pmod{4},$ then replace $y$ with $y^2$ so that $y^r \equiv 1 \pmod{4}.$  
Since $y^r \equiv 7 \pmod{6},$ there is an $n \in \N$ such that 
$y^r = 6n+7.$  Reducing this equation $\pmod{4}$ reveals that $n$ is odd.  
Furthermore, since $y\ge s\left(\prod\limits_{\substack{\textmd{primes} \\ p \mid r}} \prod_{i=1}^3 \ell_{i,p}\right)$ 
and since $s \in \N$ and $\ell_i \neq 2,3$ we deduce that $y \ge 125$ and 
hence that $n\ge 5.$  We see that the hypotheses of Proposition \ref{ng_prop3} are satisfied and hence 
that there is an ideal $J \in \mathcal{I}_{K_n}$ whose class $\lbrack J \rbrack$ has order $r.$
\end{proof}

\normalsize
\baselineskip=17pt

%\begin{comment}
\providecommand{\bysame}{\leavevmode\hbox to3em{\hrulefill}\thinspace}
\providecommand{\MR}{\relax\ifhmode\unskip\space\fi MR }
% \MRhref is called by the amsart/book/proc definition of \MR.
\providecommand{\MRhref}[2]{%
  \href{http://www.ams.org/mathscinet-getitem?mr=#1}{#2}
}
\providecommand{\href}[2]{#2}

%\end{comment}

\end{document}